\newtheorem{thm}{Theorem}[section]
\newtheorem{cor}[thm]{Corollary}
\newtheorem{lem}[thm]{Lemma}
\newtheorem{prop}[thm]{Proposition}
\theoremstyle{definition}
\newtheorem{defn}[thm]{Definition}
\theoremstyle{remark}
\newtheorem{rem}[thm]{Remark}
\numberwithin{equation}{section}
\newcommand{\bC}{\mathbb{C}}
\newcommand{\bF}{\mathbb{F}}
\newcommand{\bH}{\mathbb{H}}
\newcommand{\bP}{\mathbb{P}}
\newcommand{\bQ}{\mathbb{Q}}
\newcommand{\bR}{\mathbb{R}}
\newcommand{\bS}{\mathbb{S}}
\newcommand{\bZ}{\mathbb{Z}}
\newcommand{\gC}{\bold{C}}
\newcommand{\MT}[2]{\bold{MT #1}(#2)}
\newcommand{\MTtheta}{\bold{MT \theta}}
\newcommand\lra{\longrightarrow}
\newcommand\trf{\mathrm{trf}}
\newcommand\Diff{\mathrm{Diff}}
\newcommand\Bun{\mathrm{Bun}}
\newcommand\Th{\mathrm{Th}}
\newcommand\colim{\mathrm{colim \,}}
\newcommand{\CircNum}[1]{\ooalign{\hfil\raise .00ex\hbox{\scriptsize #1}\hfil\crcr\mathhexbox20D}}
\newcommand{\X}{B}
\newcommand{\Mon}{\mathrm{Mon}}
\newcommand{\map}{\mathrm{map}}
\newcommand{\fr}{\mathrm{fr}}
\newcommand{\Spin}{\mathrm{Spin}}
\newcommand{\Pin}{\mathrm{Pin}}
\let\emptyset\varnothing
\title[Framed, $r$-Spin and Pin moduli spaces and mapping class groups]{Homology of the moduli spaces and mapping class groups of framed, $r$-Spin and Pin surfaces}
\author{Oscar Randal-Williams}
\thanks{The author has been supported by an EPSRC Studentship, DTA grant number EP/P502667/1, ERC Advanced Grant No.\ 228082, and the Danish National Research Foundation through the Centre for Symmetry and Deformation.}
\email{o.randal-williams@dpmms.cam.ac.uk}
\address{Centre for Mathematical Sciences\\
Wilberforce Road\\
Cambridge CB3 0WB\\
UK}
\date{\today}
\subjclass[2010]{55R40, 57R15, 57R50, 57M07}
\keywords{Mapping class groups, Moduli spaces, Surface bundles}
\begin{document}

\begin{abstract}
We give definitions of moduli spaces of framed, $r$-Spin and $\Pin^\pm$ surfaces. We apply earlier work of the author to show that each of these moduli spaces exhibits homological stability, and we identify the stable integral homology with that of certain infinite loop spaces in each case. We further show that these moduli spaces each have path components which are Eilenberg--MacLane spaces for the framed, $r$-Spin and $\Pin^\pm$ mapping class groups respectively, and hence we also identify the stable group homology of these groups. 

In particular: the stable framed mapping class group has trivial rational homology, and its abelianisation is $\bZ/24$; the rational homology of the stable $\Pin^\pm$ mapping class groups coincides with that of the non-orientable mapping class group, and their abelianisations are $\bZ/2$ for $\Pin^+$ and $(\bZ/2)^3$ for $\Pin^-$.
\end{abstract}
\maketitle

\section{Introduction and statement of results}

Recent advances in the theory of moduli spaces of complex curves \cite{MW, H} concern the stable topology of these spaces, that is, the topology of moduli spaces of curves of high genus. This theory has been built upon by several workers to---amongst other things---deal with tangential structures other than orientations. Thus much is known about the homology of moduli spaces of unoriented surfaces \cite{Wahl}, moduli spaces of Spin surfaces \cite{HSpin, Bauer, galatius-2005}, and moduli spaces of oriented surfaces with maps to a simply-connected background space \cite{CM, CM2}.

The author has recently given \cite{R-WResolution} a general theory of homological stability for moduli spaces of surfaces with $\theta$-structure (which we define below). This recovers the above examples, but also allows one to effectively study many moduli spaces that are new to the literature. In this paper we study three examples of these: moduli spaces of framed surfaces, moduli spaces of $r$-Spin surfaces, and moduli spaces of $\Pin^\pm$ surfaces. 

\vspace{2ex}

Let us give a precise definition of the moduli spaces we have in mind. Write $F$ for a smooth surface, possibly with boundary. Let $\theta : \X \to BO(2)$ be a Serre fibration, and $\gamma_2 \to BO(2)$ be the universal bundle, so we obtain a 2-dimensional vector bundle $\theta^* \gamma_2 \to \X$. We let $\Bun(TF, \theta^*\gamma_2)$ denote the space of bundle maps $TF \to \theta^*\gamma_2$, i.e.\ fibrewise linear isomorphisms. Given a bundle map $\delta : TF|_{\partial F} \to \theta^*\gamma_2$, we let $\Bun_\partial(TF, \theta^*\gamma_2;\delta)$ denote the space of bundle maps $TF \to \theta^*\gamma_2$ that restrict to $\delta$ on the boundary. (If the surface $F$ is closed, $\partial F = \emptyset$ and there can be no $\delta$.) Let $\Diff_\partial(F)$ denote the group of diffeomorphisms of $F$ which restrict to the identity diffeomorphism on some neighbourhood of the boundary, and equip it with the $C^\infty$ topology.

\begin{defn}
The \textit{moduli space of surfaces with $\theta$-structure of topological type $F$ and boundary condition $\delta$} is the homotopy quotient
$$\mathcal{M}^\theta(F;\delta) := (\Bun_\partial(TF, \theta^*\gamma_2;\delta) \times E\Diff_\partial(F)) / \Diff_\partial(F),$$
where the group acts diagonally.
\end{defn}

If we define $\mathcal{E}^\theta(F;\delta) := (\Bun_\partial(TF, \theta^*\gamma_2;\delta) \times F \times E\Diff_\partial(F))/ {\Diff_\partial(F)}$, where the group acts triagonally, then the projection map
$$\mathcal{E}^\theta(F;\delta) \lra \mathcal{M}^\theta(F;\delta)$$
is a smooth $F$-bundle equipped with a bundle map $T^v \mathcal{E}^\theta(F;\delta) \to \theta^*\gamma_2$ from the vertical tangent bundle satisfying appropriate boundary conditions, and is universal with this property. This construction is discussed in more depth in \cite{GMTW, GR-W, R-WResolution}.

\begin{defn}
Given a $\theta$-structure $\delta$ on $\partial F$, and a point $\xi \in \mathcal{M}^\theta(F;\delta)$, we define the \textit{$\theta$ mapping class group} (at $\xi$) to be the fundamental group
$$\Gamma^\theta(F;\xi) := \pi_1(\mathcal{M}^\theta(F;\delta), \xi).$$
\end{defn}
If we do not wish to introduce notation for a boundary condition, we may write $\mathcal{M}^\theta(F)$ to denote $\mathcal{M}^\theta(F;\delta)$ with an unspecified $\delta$.

\vspace{2ex}

These moduli spaces have certain \textit{stabilisation maps} between them. Let $F$ be a surface with boundary condition $\delta : TF|_{\partial F} \to \theta^*\gamma_2$ and $F'$ be a surface with $\theta$-structure $\ell_{F'} : TF' \to \theta^*\gamma_2$. If we are given collections of boundary components $\partial_0 F \subset \partial F$ and $\partial_0 F' \subset \partial F'$, and an identification $\psi : \partial_0 F \cong \partial_0 F'$ such that $\psi^*(\ell_{F'}\vert_{\partial_0 F'}) = \delta\vert_{\partial_0 F}$, then there is a map
$$\mathcal{M}^\theta(F;\delta) \lra \mathcal{M}^\theta(F \cup_\psi F';\delta')$$
obtained by gluing $F'$ to $F$ along the identified boundaries. 

If we write $\Sigma_{g, b}$ for the orientable surface of genus $g$ with $b$ boundary components, these stabilisation maps between orientable surfaces are generated by certain elementary stabilisation maps
\begin{eqnarray*}
\alpha(g): \mathcal{M}^\theta(\Sigma_{g, b};\delta) \lra \mathcal{M}^\theta(\Sigma_{g+1, b-1};\delta')\\
\beta(g): \mathcal{M}^\theta(\Sigma_{g, b};\delta) \lra \mathcal{M}^\theta(\Sigma_{g, b+1};\delta')\\
\gamma(g): \mathcal{M}^\theta(\Sigma_{g, b};\delta) \lra \mathcal{M}^\theta(\Sigma_{g, b-1};\delta')
\end{eqnarray*}
given by gluing on a pair of pants along the legs, a pair of pants along the waist, and a disc, respectively. If we write $S_{n, b}$ for the non-orientable surface of genus $n$ with $b$ boundary components, in addition to the analogues of the above maps there are also stabilisation maps
\begin{eqnarray*}
\mu(n): \mathcal{M}^\theta(S_{n, b};\delta) \lra \mathcal{M}^\theta(S_{n+1, b};\delta')
\end{eqnarray*}
given by gluing on a projective plane with two discs removed.

\vspace{2ex}

Theorems about the stable topology of the moduli spaces $\mathcal{M}^\theta(F)$ typically take the form of stating that a comparison map to a certain infinite loop space is a homology equivalence in some range of degrees.

\begin{defn}\label{defn:MTtheta}
The \textit{Madsen--Tillmann spectrum} of $\theta$, denoted $\MTtheta$, is the Thom spectrum of the virtual bundle $-\theta^* \gamma_2 \to \X$. We denote by $\Omega^\infty \MTtheta$ the associated infinite loop space.
\end{defn}
For closed surfaces $F$ there is a natural comparison map
$$\alpha_F : \mathcal{M}^\theta(F) \lra \Omega^\infty \MTtheta$$
defined using Pontrjagin--Thom theory, and many characteristic classes of surface bundles with $\theta$-structure exist universally (i.e.\ independently of the topological type of $F$) on $\Omega^\infty \MTtheta$.

\subsection{Moduli spaces of framed surfaces} As our first example, let us take the tangential structure $\theta : EO(2) \to BO(2)$ corresponding to framings, and write $\mathcal{M}^{\fr}(\Sigma_{g, b};\delta)$ for the moduli space of framed surfaces with underlying surface $\Sigma_{g, b}$ and fixed framing $\delta$ along the boundary. Write $\Gamma^{\fr}(\Sigma_{g, b};\xi)$ for the framed mapping class group. We will show that the path component of $\mathcal{M}^{\fr}(\Sigma_{g, b};\delta)$ containing $\xi$ is homotopy equivalent to $B \Gamma^{\fr}(\Sigma_{g, b};\xi)$, so homological questions about the framed mapping class group are equivalent to homological questions about the moduli space of framed surfaces.

Our main theorem concerning the moduli spaces of framed surfaces is that they exhibit homological stability: the homology groups $H_*(\mathcal{M}^{\fr}(\Sigma_{g, b};\delta);\bZ)$ are independent of $g$, $b$ and $\delta$ as long as $6* \leq 2g-8$. Furthermore, the stable homology coincides with the homology of the space $\Omega^\infty \MT{EO}{2} = \Omega^\infty \mathbf{S}^{-2} \simeq \Omega^2 Q_0S^0$, the double loop space of the free infinite loop space on a point. In particular, it follows that the abelianisation of the group $\Gamma^{\fr}(\Sigma_{g, b};\xi)$ is $\bZ/24$ as long as $g \geq 7$, and that the rational group homology of $\Gamma^{\fr}(\Sigma_{g, b};\xi)$ is trivial in the stable range.

\subsection{Moduli spaces of $r$-Spin surfaces}
Recall that $\Spin^r(2) = U(1)$, but its standard 1-dimensional (complex) representation is the $r$-th tensor power of the standard representation of $U(1)$.

Fixing an $r$, we take the tangential structure $\theta : B\Spin^r(2) \to BO(2)$, and write $\mathcal{M}^{\Spin^r}(\Sigma_{g, b};\delta)$ for the moduli space of $r$-Spin surfaces with underlying surface $\Sigma_{g, b}$ and fixed $r$-Spin structure $\delta$ along the boundary. Write $\Gamma^{\Spin^r}(\Sigma_{g, b};\xi)$ for the $r$-Spin mapping class group. We will show that the component of $\mathcal{M}^{\Spin^r}(\Sigma_{g, b};\delta)$ containing $\xi$ is homotopy equivalent to $B \Gamma^{\Spin^r}(\Sigma_{g, b};\xi)$, so homological questions about the $r$-Spin mapping class group are equivalent to homological questions about the moduli space of $r$-Spin surfaces.

Our main theorem concerning the moduli spaces of $r$-Spin surfaces is that they exhibit homological stability: the homology groups $H_*(\mathcal{M}^{\Spin^r}(\Sigma_{g, b};\delta);\bZ)$ are independent of $g$, $b$ and $\delta$ in degrees $6* \leq 2g-8$. For $r=2$, the case of ordinary Spin Riemann surfaces, we can do better: the homology groups $H_*(\mathcal{M}^{\Spin^2}(\Sigma_{g, b};\delta);\bZ)$ are independent of $g$, $b$ and $\delta$ in degrees $5* \leq 2g-7$.

In the case $r=2$, our 2-Spin mapping class groups coincide with the extended Spin mapping class groups of Masbaum \cite{Masbaum}. Galatius has shown \cite{galatius-2005} that the stable homology of these groups coincides with the homology of the infinite loop space $\Omega^\infty_{0} \MT{Spin}{2}$. We show that a similar description of the stable homology is possible for all $r$, and give computational applications of this result in \cite{RWPicardrSpin}.

\subsection{Moduli spaces of {$\Pin^\pm$} surfaces} Recall that there are two generalisations of the covering group $\Spin(2) \to SO(2)$ to a covering group of $O(2)$, called $\Pin^+(2)$ and $\Pin^-(2)$, whose definitions we recall in \S\ref{sec:PinSurfaces}. These give Serre fibrations $\theta: B\Pin^\pm(2) \to BO(2)$ with corresponding moduli spaces $\mathcal{M}^{\Pin^\pm}(S_{n, b};\delta)$ and mapping class groups $\Gamma^{\Pin^\pm}(S_{n, b};\xi)$. We will show that the component of $\mathcal{M}^{\Pin^\pm}(S_{n, b};\delta)$ containing a point $\xi$ is homotopy equivalent to $B\Gamma^{\Pin^\pm}(S_{n, b};\xi)$, so homological questions about the $\Pin^\pm$ mapping class group are equivalent to homological questions about the $\Pin^\pm$ moduli space.

Our main theorems concerning these moduli spaces is that they exhibit homological stability (for non-orientable surfaces), and we give precise stability ranges in \S\ref{sec:HomStabPin}. Furthermore, the stable homology coincides with that of the infinite loop spaces $\Omega^\infty \MT{Pin^+}{2}$ and $\Omega^\infty \MT{Pin^-}{2}$ respectively. In particular, we are able to calculate that the abelianisation of the group $\Gamma^{\Pin^+}(S_{n, b};\delta)$ is $\bZ/2$ as long as $n \geq 10$, and that of $\Gamma^{\Pin^-}(S_{n, b};\delta)$ is $(\bZ/2)^3$ as long as $n \geq 13$. In \S\ref{sec:StableHomologyPin} we also study the divisibility of certain characteristic classes $\zeta_i$ which were defined by Wahl in the integral cohomology of the moduli spaces of unoriented surfaces, when they are pulled back to the moduli spaces $\mathcal{M}^{\Pin^\pm}(F)$.

\subsection{Guide}
We prove the homological stability theorems for these tangential structures by applying the general stability theorems of \cite{R-WResolution}. In order to verify the hypotheses of these theorems it is necessary to obtain a good understanding of the sets of path components $\pi_0(\mathcal{M}^\theta(F;\delta))$ and the effect of stabilisation maps between these sets. The bulk of the paper is dedicated to this problem for the tangential structures in question, and can be understood without reference to \cite{R-WResolution}. In the proofs of the homological stability theorems we refer to concepts defined in \cite[\S 7]{R-WResolution}, and we will not give these definitions again.

\section{Moduli spaces of framed and $r$-Spin surfaces}

In the introduction we explained how a Serre fibration $\theta : \X \to BO(2)$ produces a moduli space of surfaces with $\theta$-structure $\mathcal{M}^\theta(F;\delta)$ for each surface $F$ and boundary condition $\delta$. In particular, if $\rho : G \to O(2)$ is a group homomorphism we may form a Serre fibration $\theta_\rho : BG \to BO(2)$ by $(EO(2) \times EG)/G \to EO(2)/O(2)$, where $G$ acts diagonally on $EO(2) \times EG$, via $\rho$ on the first factor.

\begin{defn}
To define $\mathcal{M}^{\Spin^r}(F;\delta)$ with $r \geq 2$, the \emph{moduli space of $r$-Spin surfaces} of topological type $F$ and boundary condition $\delta$, we perform the above construction with the homomorphism $U(1) \overset{(-)^r}\to U(1) = SO(2) \to O(2)$. We call the map $\theta^r : B\Spin^r(2) \to BO(2)$ and the bundle it classifies $\gamma_2^r$.

To define $\mathcal{M}^{\fr}(F;\delta)$, the \emph{moduli space of framed surfaces} of topological type $F$ and boundary condition $\delta$, we perform the above construction with the homomorphism $\{e\} \to O(2)$. We call the map $\theta^0 : B\Spin^0(2) \to BO(2)$ and the bundle it classifies $\gamma_2^0$.
\end{defn}

The maps defining both of these tangential structures naturally factor as
$$\theta^r : B\Spin^r(2) \overset{\theta^{r,+}}\lra BSO(2) \overset{\theta^+}\lra BO(2),$$
where the second map is the double cover classifying $\gamma_2^+$, the universal oriented rank 2 vector bundle. If $\ell : TF \to \gamma_2^r$ is a $\theta^r$-structure on $F$, we denote by $\ell^+ : TF \to \gamma_2^+$ the underlying orientation obtained from the map $\gamma_2^r \to \gamma_2^+$. This defines a map
$$\Bun_\partial(TF, \gamma_2^r;\delta) \lra \Bun_\partial(TF, \gamma_2^+;\delta^+)$$
whose fibre over a $\theta^+$-structure $\ell^+$ is homotopy equivalent to the space of lifts in the diagram
\begin{equation*}
\xymatrix{
& B\bZ/r \ar[d]\\
\partial F \ar[r]^-{\delta}\ar@{^(->}[d]& B\Spin^r(2)\ar[d]^-{\theta^{r,+}}\\
F \ar[r]^-{\ell^+} \ar@{-->}[ru] & BSO(2)
}
\end{equation*}
which is either empty (if $\ell^+$ does not admit a refinement to a $\theta^r$-structure) or else is \emph{non-canonically} homotopy equivalent to $\map_*(F / \partial F, B\bZ/r)$, where if $\partial F = \emptyset$ we interpret $F / \partial F$ as $F$ with a disjoint basepoint adjoined. More precisely, the group $\map_*(F / \partial F, B\bZ/r)$ acts on the space of lifts, and the orbit map given by any choice of lift is a homotopy equivalence.

Hence there are principal fibrations
\begin{eqnarray}
\map_*(F/\partial F, SO(2)) \lra \mathcal{M}^{\fr}(F;\delta) \lra \mathcal{M}^{+}(F;\delta^+) \label{eq:FibSeqFr}\\
\map_*(F/\partial F, B\bZ/r) \lra \mathcal{M}^{\Spin^r}(F;\delta) \lra \mathcal{M}^{+}(F;\delta^+)\label{eq:FibSeqrSpin}
\end{eqnarray}
whenever the total space is non-empty. Note that if $F$ has no boundary, $\mathcal{M}^{\mathrm{fr}}(F)$ is non-empty if and only if $F$ is diffeomorphic to a torus. Thus when discussing framed surfaces we will always suppose that they have boundary. Furthermore, from now on we will assume that all surfaces are orientable.

\subsection{Naturality properties}

If $r'$ divides $r$, there is a map of moduli spaces
$$\mathcal{M}^{\Spin^r}(F;\delta_r) \lra \mathcal{M}^{\Spin^{r'}}(F;\delta_{r'}),$$
where $\delta_{r'}$ is the induced $r'$-Spin structure from the $r$-Spin structure $\delta_r$. Furthermore, there are maps to all of these moduli spaces from $\mathcal{M}^{\mathrm{fr}}(F;\delta)$.

\subsection{Mapping class groups}

In the introduction we defined the $\theta$ mapping class group of a $\theta$-surface $\xi \in \mathcal{M}^\theta(F;\delta)$ to be the fundamental group based at this point.

In the case of framings---as $\partial F$ is assumed to be non-empty---in the fibration (\ref{eq:FibSeqFr}) we have an identification of the fibre through $\xi$ with the space $\map_*(F/\partial F, SO(2))$, which is homotopy-discrete and so we have an exact sequence of groups and pointed sets
$$0 \lra \pi_1(\mathcal{M}^{\fr}(F;\delta), \xi) \lra \Gamma^+(F) \overset{\varphi} \lra H^1(F, \partial F; \bZ) \lra \pi_0(\mathcal{M}^{\mathrm{fr}}(F;\delta)) \lra *.$$ 
Here $\Gamma^+(F)$ denotes the usual mapping class group of the oriented surface $F$ relative to its boundary, and the map $\varphi$ coincides with the crossed homomorphism obtained by Trapp \cite{Trapp} which gives an extended symplectic representation of the oriented mapping class group, though we will not pursue this connection. The above sequence identifies the framed mapping class group $\Gamma^{\fr}(F;\xi)$ as the subgroup of the oriented mapping class group $\Gamma^{+}(F)$ consisting of those (isotopy classes of) diffeomorphisms which fix the isotopy class of framings $[\xi]$. Furthermore, it implies that $\mathcal{M}^{\fr}(F;\delta)$ is a disjoint union of $K(\pi, 1)$'s, i.e.\ has the homotopy type of a groupoid.

In the case of $r$-Spin structures, if we suppose that $\partial F$ is non-empty we likewise obtain an exact sequence of groups and pointed sets
$$0 \lra \pi_1(\mathcal{M}^{\Spin^r}(F;\delta), \xi) \lra \Gamma^+(F) \overset{\varphi} \lra H^1(F, \partial F; \bZ/r) \lra \cdots.$$
This identifies the $r$-Spin mapping class group $\Gamma^{\Spin^r}(F;\xi)$ as the subgroup of the oriented mapping class group $\Gamma^{+}(F)$ consisting of those (isotopy classes of) diffeomorphisms which fix the isotopy class of $r$-Spin structures $[\xi]$. 

If $F$ does not have boundary, then we instead obtain a sequence
$$0 \lra \bZ/r \lra \pi_1(\mathcal{M}^{\Spin^r}(F), \xi) \lra \Gamma^+(F) \overset{\varphi} \lra H^1(F; \bZ/r) \lra \cdots$$
which identifies the $r$-Spin mapping class group with an extension by $\bZ/r$ of the subgroup of $\Gamma^+(F)$ of mapping classes that preserve a $r$-Spin structure up to isomorphism. In either case, $\mathcal{M}^{\Spin^r}(F;\delta)$ is a disjoint union of $K(\pi, 1)$'s, i.e.\ has the homotopy type of a groupoid.

\subsection{The set of $\theta^r$-structures}
In order to apply the results of \cite{R-WResolution}, we must calculate the set of path components $\pi_0(\mathcal{M}^{\theta^r}(F;\delta))$, that is, calculate the set of isotopy classes of $\theta^r$-structures a surface admits up to diffeomorphism of the underlying surface. This coincides with the quotient set of $\pi_0\Bun_\partial(TF, \gamma_2^{\fr};\delta)$ or $\pi_0\Bun_\partial(TF, \gamma_2^{r};\delta)$ by the action of the unoriented mapping class group $\Gamma(F)$. We would rather work with the ordinary orientation-preserving mapping class group $\Gamma^+(F)$, so we fix an orientation of $F$ and define the subspace
$$\Bun_\partial^+(TF, \gamma_2^{r};\delta) \subset \Bun_\partial(TF, \gamma_2^{r};\delta)$$
of those bundle maps $\ell : TF \to \gamma_2^r$ such that $\ell^+$ is the given orientation of $F$, and define
$$\theta^r(F;\delta) := \pi_0(\Bun^+_\partial(TF, \gamma_2^{r};\delta)).
$$
This set has an action of $\Gamma^+(F)$, and as described earlier, is either empty or has an action of $H^1(F, \partial F ; \bZ/r)$ which is free and transitive, i.e.\ is a torsor.

\begin{defn}
Choose once and for all a $\theta^r$-structure on $\bR^2$, that is, a linear map $\bR^2 \to \gamma_2^r$. If $V \to B$ is a framed rank two vector bundle, the \emph{standard $\theta^r$-structure} is the fibrewise linear isomorphism $V \to \bR^2 \to \gamma_2^r$.
\end{defn}

On the surface $\Sigma_{g, b+1}$ having $\theta^r$-structure $\xi$ with boundary condition $\delta$, choose a marked point on each boundary component, and equip each marked point with the framing coming from the orientation and the inwards pointing normal vector. After perhaps changing $\delta$ to an isomorphic boundary condition, we may suppose that the $\theta^r$-structure is standard at each marked point.

Let $\{a_i, b_i, t_i, \partial_i, r_i\}$ be the collection of simple closed curves and simple arcs in $\Sigma_{g,b+1}$ as shown in Figure \ref{fig:OrientableCycles}, where the endpoints of the arcs lie at the standard marked points on each boundary component.
\begin{figure}[h]
\begin{center}
\includegraphics[bb=0 0 351 203]{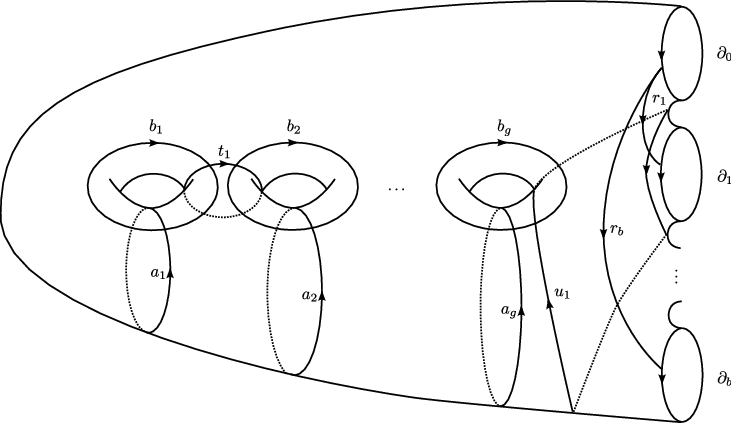}
\end{center}
\caption{Curves and arcs on $\Sigma_{g, b+1}$.}
\label{fig:OrientableCycles}
\end{figure}
Note that $a_i \cap b_j = \delta_{ij}$ and $a_i \cap a_j = b_i \cap b_j =0$. These curves have canonical lifts $\tilde{a}_i$, $\tilde{b}_i$, $\tilde{t}_i$, $\tilde{\partial}_i$, $\tilde{r}_i$ to curves or arcs on $\bS\Sigma_{g,b+1}$, given by assigning them their unit forward tangent vector at each point. We also have the loop $z$ in $\bS\Sigma_{g,b+1}$ given by a single fibre, with orientation given by that of $\Sigma_{g,b+1}$. We remark that these lifts are \textit{not} homology invariant: although $t_1$ is homologous to $a_2 - a_1$, $\tilde{t}_1$ is homologous to $\tilde{a}_2-\tilde{a}_1+z$. More generally, when lifting homologous elements from $\Sigma_{g,b+1}$ to $\bS \Sigma_{g,b+1}$, there is a correction term given by the Euler characteristic of a homology chain times $z$.

\begin{defn}
Given a $\theta^r$-structure $\xi$ on $\Sigma_{g, b+1}$ we define a $\bZ/r$-valued function $q_\xi$ on the set of simple closed curves (or simple arcs between the standard marked points on each boundary component) by assigning to each curve or arc $x$ the value $q_\xi(x)$ determined as follows: $TF \vert_x$ has a $\theta^r$-structure, but the forward vector field along $x$ splits off a trivial 1-dimensional sub-bundle, so reduces the structure group of this bundle to $\bZ/r$. If $x$ is a simple closed curve, the monodromy gives an element of $\bZ/r$. If $x$ is an arc, the framing of $T\Sigma_{g,b+1}\vert_x$ given by the forward vector (and the orientation) agrees with the standard framing at the start of the arc, but not at the end: here it differs by a half rotation. However, the orientation of the surface gives a canonical choice of half rotation, which makes the $\theta^r$-structure on the arc be standard near its ends, and hence gives an element of $\bZ/r$. In both cases we denote the element of $\bZ/r$ obtained by $\Mon(x)$.

We define $q_\xi(x)$ to be $\Mon(x) - 1 \in \bZ/r$.
\end{defn}

The reader may be at a loss as to why we subtract 1 to what is already a perfectly good invariant: it is a normalisation, and ensures that if a simple closed curve $x$ bounds a disc, then $q_\xi(x)=0$. We may now define a function
$$p: \theta^r(\Sigma_{g,b+1};\delta) \lra (\bZ/r)^{2g+b}$$
given by $p(\xi) := (q_\xi(a_1), q_\xi(b_1), \ldots, q_\xi(a_g), q_\xi(b_g), q_\xi(r_1), \ldots, q_\xi(r_b))$.

\begin{prop}\label{prop:Coordinates}
If $\theta^r(\Sigma_{g, b+1};\delta)$ is non-empty, the map $p$ is a bijection.
\end{prop}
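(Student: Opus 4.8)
The plan is to exhibit both an inverse to $p$ (surjectivity) and to show $p$ is injective, working relative to the orientation double cover picture that is already set up. Fix a basepoint $\theta_r$-structure $\xi_0 \in \theta_r(\Sigma_{g,b+1};\delta)$, whose existence we are assuming. By the fibrations (\ref{eq:FibSeqFr}) and (\ref{eq:FibSeqrSpin}), the set $\theta_r(\Sigma_{g,b+1};\delta)$ is a torsor for the group $\pi_0\map_*(F/\partial F, G)$, where $G = SO(2)$ when $r=0$ and $G = B\bZ/r$ when $r>0$; in either case this group is naturally $H^1(F, \partial F; \bZ/r)$ (with $\bZ$ in place of $\bZ/r$ when $r=0$, but note that in the framed case the relevant quotient making $p$ well-defined forces the $\bZ/r=\bZ/0=\bZ$ reading to be consistent — more precisely for $r=0$ we read $\bZ/r$ as $\bZ$ throughout). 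So the first step is: identify $\theta_r(\Sigma_{g,b+1};\delta)$ with a torsor over $H^1(\Sigma_{g,b+1}, \partial; \bZ/r) \cong (\bZ/r)^{2g+b}$, and check that the difference of two $\theta_r$-structures, measured by this torsor action, is detected precisely by the differences of the values $q_\xi$ on a set of classes dual to the chosen basis.

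The key computational input is a \emph{cocycle-type formula}: for a fixed simple closed curve or arc $x$ and two $\theta_r$-structures $\xi, \xi'$ differing by a class $c \in H^1(F,\partial F;\bZ/r)$, one has $q_{\xi'}(x) - q_\xi(x) = \langle c, [x]\rangle$, where $[x]$ is the homology class of $x$ in $H_1(F,\partial F;\bZ/r)$ (for arcs, the relative class). This is essentially the statement that $q_\xi(x)$, as a function of $\xi$, is ``affine-linear'' with linear part the evaluation pairing. Granting this, the composite $p$ becomes: pick the dual basis, and $p$ is then (after subtracting $p(\xi_0)$) exactly the map sending the torsor element $c$ to $(\langle c, [a_1]\rangle, \langle c, [b_1]\rangle, \ldots, \langle c, [r_b]\rangle)$. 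Since the classes $[a_i], [b_i], [r_j]$ form a basis of $H_1(\Sigma_{g,b+1}, \partial; \bZ/r) \cong (\bZ/r)^{2g+b}$ — this is a standard fact about the homology of a surface with one boundary component split off, using $a_i \cap b_j = \delta_{ij}$ and that the $r_j$ account for the boundary classes — the evaluation map $c \mapsto (\langle c, [x_k]\rangle)_k$ is an isomorphism $H^1 \to (\bZ/r)^{2g+b}$. Hence $p$ is a bijection, being a torsor-shift of an isomorphism.

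The main obstacle is establishing the cocycle formula cleanly, i.e.\ that $q_\xi$ transforms correctly under the torsor action, and in particular handling the arcs $r_j$ and the half-rotation normalisation so that the ``$-1$'' shift and the boundary-marked-point framings are book-kept consistently. The subtlety flagged in the text — that the tangential lifts $\tilde x$ to the sphere bundle $\bS\Sigma_{g,b+1}$ are not homology invariant, with an Euler-characteristic correction term $\chi \cdot z$ — means one cannot naively say $q_\xi$ only depends on $[x]$; rather, $q_\xi(x)$ depends on $x$ through $\tilde x \in H_1(\bS\Sigma_{g,b+1})$, and it is only the \emph{difference} $q_{\xi'}(x) - q_\xi(x)$ that descends to $H_1(F,\partial F)$, because the correction term is independent of $\xi$. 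So I would phrase the argument as: (i) $Mon(x)$ is computed by pulling back the $\theta_r$-structure along $\tilde x$ and reading off an element of $\pi_1(\text{fibre})$; (ii) changing $\xi$ by $c \in H^1(F,\partial F;\bZ/r)$ changes this by $\langle (\text{image of } c \text{ in } H^1(\bS F)), \tilde x\rangle$, and one checks the pairing of the pulled-back $c$ with $\tilde x$ equals $\langle c, [x]\rangle$ (the $z$-direction contributes nothing to classes pulled back from the base); (iii) conclude. The homology-of-the-surface computation and the identification of the torsor group with $H^1$ are routine and I would cite them or relegate them to a line; the genuine content is step (ii).
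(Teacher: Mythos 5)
Your proof is correct, but it takes a genuinely different route from the paper's. The paper argues directly via obstruction theory: cutting $\Sigma_{g,b+1}$ along the curves $a_i, b_i$ and arcs $r_j$ leaves a disc, so a $\theta_r$-structure is determined by its values on these curves together with its restriction to the disc; the fibre of $\theta_r$ is a $K(\bZ/r,1)$ (or $K(\bZ,1)$ for $r=0$), so $\pi_2(\mathrm{fibre})=0$ and the extension over the disc, when it exists, is unique --- giving injectivity and surjectivity in one stroke. You instead fix a basepoint $\xi_0$, observe that the source is a torsor over $H^1(F,\partial F;\bZ/r)$, prove the affine transformation law $q_{c\cdot\xi}(x)-q_\xi(x)=\langle c,[x]\rangle$, and conclude by noting that the classes $[a_i],[b_i],[r_j]$ give a basis of $H_1(F,\partial F;\bZ/r)$ against which evaluation is perfect. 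Your handling of the Euler-characteristic correction term is right: it drops out of the difference because it is $\xi$-independent, which is exactly why the transformation law lives on $F$ rather than on $\bS F$. What the paper's approach buys is brevity and a single geometric idea; what your approach buys is more structure --- it exhibits $p$ explicitly as a torsor map, which feeds naturally into the computations of the Dehn-twist action later in \S\ref{sec:DiffClassesThetaRStruct}, and makes the role of Lemma \ref{lem:ThetaStructFundClass} transparent. Both arguments use the same essential fact (the fibre of $\theta_r$ is a $K(\bZ/r,1)$); yours channels it through cohomology rather than cell-by-cell obstruction theory.
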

\begin{proof}
Note that once a $\theta^r$-structure is determined over the curves $a_i$, $b_i$ and $r_j$, it remains to give a $\theta^r$-structure on a disc satisfying a certain boundary condition (which up to isomorphism depends only on $\delta$, and not on the values of $\xi$ on the curves). This is possible (and if so, in a unique way) if and only if $\theta^r(\Sigma_{g, b+1};\delta)$ is non-empty, by obstruction theory for the map $\theta^r$.
\end{proof}

Using this proposition, we may study the action of the mapping class group $\Gamma^+(\Sigma_{g,b+1})$ on the set $\theta^r(\Sigma_{g, b+1};\delta)$ by studying its action in the ``coordinate system'' given by $p$.

\begin{rem}
We should mention to what extent the function $p$ is unique. We have only defined it for boundary conditions with a choice of standard marked point on each boundary, and an ordering of the boundaries. Varying either of these choices will vary the function.
\end{rem}

If $x$ is a simple closed curve or simple arc on $F$, then we denote by $\tilde{x}$ its canonical lift to $\bS F$ given by the forward tangent vector.

\begin{lem}\label{lem:ThetaStructFundClass}
A $\theta^r$-structure $\xi$ on $F$ gives an element $[\xi] \in H^1(\bS F; \bZ/r)$, such that the function $q_\xi$ on a simple closed curve or arc with standard ends $x$ is given by 
$$q_\xi(x) =  [\xi](\tilde{x}) -1 \in \bZ/r.$$
\end{lem}
\begin{proof}
By a Serre spectral sequence calculation, there is a unique class in $H^1(\bS \gamma_2^r ; \bZ/r)$ which restricts to $1 \in H^1(S^1;\bZ/r)$, as the Euler class of $\gamma_2^r$ is divisible by $r$. This gives a map $\bS \gamma_2^r \to B\bZ/r$ which we see is a homotopy equivalence. A $\theta^r$-structure on $F$ gives a map $\bS F \to \bS \gamma_2^r$ taking the standard part of the boundary to the basepoint and we define $[\xi]$ to be the pullback of the tautological class via this map. The claimed property is now immediate.
\end{proof}

Let us write $\tau_a$ for the (forward) Dehn twist around a simple closed curve $a$, and recall that the action of a twist on a homology class $x$ is given by the formula $\tau_a(x) = x + \langle a,x \rangle \cdot a$, where $\langle\,\, ,\, \rangle$ denotes the intersection product. A Dehn twist is a diffeomorphism, so in particular gives a map of the sphere bundle: we use $\tau_a$ to denote this map also.

\begin{lem}
We have $\tau_a(\tilde{x}) = \widetilde{\tau_a(x)}$, which is homologous to $\tilde{x} + \langle a,x \rangle \cdot \tilde{a}$. Hence we have the formula
$$q_{\tau_a^*\xi}(x) = \tau_a^*(q_\xi)(x) = q_\xi(x) + \langle a,x\rangle (q_\xi(a)+1)$$
for the action of Dehn twists on $\theta^r$-structures.
\end{lem}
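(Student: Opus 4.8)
The statement to prove has two parts: first, that the canonical lift of $\tau_a(x)$ equals $\tau_a$ applied to the canonical lift $\tilde x$, and that this is homologous to $\tilde x + \langle a,x\rangle\tilde a$; and second, the resulting formula for how Dehn twists act on the invariants $q_\xi$.

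\medskip

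\textbf{Proof proposal.} The plan is to treat the two halves in turn, as the second is a formal consequence of the first together with Lemma~\ref{lem:ThetaStructFundClass}. For the first half, I would argue as follows. A Dehn twist $\tau_a$ is (isotopic to) a diffeomorphism of $F$, hence acts on $\bS F$ by its derivative $D\tau_a$, which is a bundle automorphism covering $\tau_a$. Since the canonical lift $\tilde x$ of a curve (or arc) $x$ is defined pointwise by sending $t\mapsto (x(t), x'(t)/|x'(t)|)$, and $D\tau_a$ sends $(p,v)\mapsto(\tau_a(p), D\tau_a(v))$, naturality of the derivative gives immediately that $D\tau_a$ carries $\tilde x$ to the curve $t\mapsto(\tau_a(x(t)), D\tau_a(x'(t))/|\cdot|)$, which is exactly the canonical lift $\widetilde{\tau_a(x)}$ of the curve $\tau_a\circ x$ (reparametrisation by a positive function does not change the unit-tangent lift up to homotopy/homology). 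This gives $\tau_a(\tilde x) = \widetilde{\tau_a(x)}$.

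\medskip

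The homological identity $\widetilde{\tau_a(x)} \sim \tilde x + \langle a,x\rangle\tilde a$ then follows by combining two facts: in $H_1(F)$ (or $H_1(F,\partial F)$ for arcs) we have $\tau_a(x) = x + \langle a,x\rangle a$ by the standard twist formula recalled just before the lemma; and the discrepancy between lifting a homology relation in $F$ and the corresponding relation in $\bS F$ is governed by the Euler-characteristic correction term $z$ noted in the discussion after Proposition~\ref{prop:Coordinates}. The point is that $\tau_a(x)$ and $x + \langle a,x\rangle a$ are not merely homologous but cobound a surface (or chain) of Euler characteristic zero: near the curve $a$ the twist inserts $|\langle a,x\rangle|$ parallel copies of $a$, and the relevant $2$-chain realising the homology is an annulus-like region (a union of annuli and rectangles), whose Euler characteristic vanishes. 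Hence the correction term is $0\cdot z = 0$ and the lift of the relation holds on the nose in $H_1(\bS F)$. I would spell this out by choosing the geometric $2$-chain exhibiting $\tau_a(x) - x - \langle a,x\rangle a = \partial(\text{chain})$ explicitly and observing its Euler characteristic.

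\medskip

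For the formula, apply Lemma~\ref{lem:ThetaStructFundClass}: $q_\xi(y) = [\xi](\tilde y) - 1$ for any simple closed curve or standard-ended arc $y$. Then
\[
q_{\tau_a^*\xi}(x) = [\tau_a^*\xi](\tilde x) - 1 = [\xi](\tau_{a*}\tilde x) - 1 = [\xi]\bigl(\tilde x + \langle a,x\rangle\tilde a\bigr) - 1 = \bigl([\xi](\tilde x)-1\bigr) + \langle a,x\rangle [\xi](\tilde a),
\]
and since $[\xi](\tilde a) = q_\xi(a)+1$ this is $q_\xi(x) + \langle a,x\rangle(q_\xi(a)+1)$, which also equals $\tau_a^*(q_\xi)(x)$ by definition of the pullback action. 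I expect the main obstacle to be the second half of the first bullet — verifying carefully that the correction term $z$ does not appear, i.e.\ pinning down the explicit Euler-characteristic-zero chain realising $\tau_a(x) - x - \langle a,x\rangle a$ as a boundary in $\bS F$ (and handling the arc case, where one works relative to the marked points on the boundary); everything else is either naturality of the derivative or a direct substitution.
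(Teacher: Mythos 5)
Your reduction of the second half to the first half via Lemma~\ref{lem:ThetaStructFundClass}, and the observation that $D\tau_a$ carries unit-tangent lifts to unit-tangent lifts by naturality of the derivative, are both fine and match the structure of the paper's argument. The gap is in the middle step, where you claim the homology relation $\widetilde{\tau_a(x)}\sim\tilde{x}+\langle a,x\rangle\tilde{a}$ follows because ``the relevant $2$-chain realising the homology is an annulus-like region\ldots whose Euler characteristic vanishes.'' This is not established and is in fact dubious on its face. The ``Euler characteristic correction'' mentioned after Proposition~\ref{prop:Coordinates} is an informal remark whose honest form is the Poincar\'e--Hopf statement that \emph{disjoint} embedded circles bounding an embedded subsurface $S$ lift to a cycle homologous to $\pm\chi(S)\cdot z$. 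In the situation at hand the three $1$-cycles $\tau_a(x)$, $x$, $a$ are pairwise transverse and all intersect: there is no embedded subsurface with these as disjoint boundary components, so the formula does not directly apply, and indeed any natural choice of $2$-chain $C$ with $\partial C=\tau_a(x)-x-\langle a,x\rangle a$ is a singular triangle-like region concentrated near the intersection points, whose ``Euler characteristic'' is ambiguous and in the most natural CW reading is $-1$ per intersection rather than $0$. (Try it on the torus with $a=\{0\}\times S^1$, $x=S^1\times\{0\}$, $\tau_a(x)$ the $(1,1)$-curve: the complementary region is a triangle with all vertices identified.)

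The paper's proof avoids this entirely: it constructs an explicit $2$-chain \emph{directly in} $\bS F$, supported near the finitely many transverse intersection points of $a$ and $x$, whose boundary is $\widetilde{\tau_a(x)}-\tilde{x}-\langle a,x\rangle\tilde{a}$. The geometric content is that near each crossing, the tangent direction of $\tau_a(x)$ sweeps from the direction of $x$ to the direction of $a$ and back through less than a full rotation, so no multiple of $z$ appears; this local statement is what must actually be checked, and it is not a consequence of any Euler characteristic count of a chain downstairs in $F$. So you have correctly identified \emph{where} the potential $z$-correction lives, but the mechanism you offer for showing it vanishes is not valid. To repair the argument you would need to replace the Euler-characteristic appeal with the local construction in the unit tangent bundle near each intersection point.
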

\begin{proof}
The first part may be seen by constructing an explicit homology in $\bS F$ between $\widetilde{\tau_a(x)}$ and $\tilde{x} + \langle a, x \rangle \cdot \tilde{a}$ concentrated near the intersection points of $a$ and $x$, when we represent them by transverse smooth 1-manifolds. The second part now follows as by Lemma \ref{lem:ThetaStructFundClass} the function $q_\xi$ is given by evaluating against a cohomology class on $\bS F$---which is linear---and then subtracting 1.
\end{proof}

It is also useful to observe that if $a$ and $b$ are disjoint simple closed curves and $a \# b$ is the simple closed curve obtained by forming the oriented connected sum of $a$ and $b$, then we obtain the equation on homology classes
$$[\widetilde{a \# b}] = [\tilde{a}] + [\tilde{b}] + [z]
 \in H_1(\bS F ; \bZ)$$
or equivalently for any $\theta^r$-structure $\xi$,
$$q_\xi(a \# b) = q_\xi(a) + q_\xi(b).$$
The same holds when one of $a$ and $b$ is a simple arc. More generally, if $x$ is a simple closed curve whose homology class may be written as $[x] = [a]+[b]$ for simple closed curves $a$, $b$, then
\begin{equation}\label{eq:QuadraticProperty}
q_\xi(x) = q_\xi(a) + q_\xi(b) + \langle a, b \rangle.
\end{equation}
This follows from the methods of Johnson \cite{Johnson} (in particular his Theorem 1B).

\subsection{Diffeomorphism classes of $\theta^r$-structures}\label{sec:DiffClassesThetaRStruct}

Once we have the bijection $p: \theta^r(\Sigma_{g,b+1};\delta) \lra (\bZ/r)^{2g+b}$, there is a surjective function $A : \theta^2(\Sigma_{g, b+1};\delta) \to \bZ/2$ given in terms of this bijection by the formula
\begin{equation}\label{eq:ArfInvariant}
\xi \mapsto \sum_{i=1}^g q_\xi(a_i) \cdot q_\xi(b_i) + \sum_{j=1}^b q_\xi(r_j) \cdot \delta_j,
\end{equation}
where $\delta_j := q_\xi(\partial_j)$. For surfaces with zero or one boundaries, the set of 2-Spin structures may be identified with the set of quadratic refinements of the intersection form \cite{Johnson} via $\xi \mapsto q_\xi$. In that case, the second term of this formula vanishes and $A$ is simply the Arf invariant of the quadratic form. Hence we call it the \emph{generalised Arf invariant} for 2-Spin surfaces with boundary. We remind the reader that the bijection $p$ was not canonical (it depended on a choice of trivialised marked point on each boundary and an ordering of the boundaries), and hence $A$ considered as a function on $\theta^2(\Sigma_{g, b+1};\delta)$ is not canonical either.

\begin{prop}\label{prop:ArfInvariantIsInvariant}
The function $A$ is $\Gamma^+(\Sigma_{g, b+1})$-invariant.
\end{prop}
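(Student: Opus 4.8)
The plan is to show that $A$ is preserved under a generating set of $\Gamma^+(\Sigma_{g,b+1})$, then argue that generators suffice. Since $A$ is a function to $\bZ/2$, not obviously a homomorphism, I cannot simply check it on generators of an abelianisation; instead I must verify directly that $A(\tau_c^*\xi) = A(\xi)$ for each $c$ in a generating set of Dehn twists, using the explicit action formula
$$q_{\tau_c^*\xi}(x) = q_\xi(x) + \langle c,x\rangle\,(q_\xi(c)+1)$$
from the lemma above. The mapping class group $\Gamma^+(\Sigma_{g,b+1})$ is generated by finitely many Dehn twists (e.g.\ a chain of twists about curves $a_i$, $b_i$, and curves connecting consecutive handles, together with twists encircling the boundary components), so it is enough to handle each such twist.

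First I would reduce the problem. Write the quadratic expression $A(\xi) = \sum_i q_\xi(a_i)q_\xi(b_i) + \sum_j q_\xi(r_j)\delta_j$. Using \eqref{eq:QuadraticProperty}, the value $q_\xi$ on \emph{any} simple closed curve is determined, up to the intersection-form correction terms, by its values on the standard curves; moreover $A(\xi)$ can be recognised intrinsically (for surfaces with $\leq 1$ boundary component) as the Arf invariant of the quadratic refinement $q_\xi$. The cleanest route is: (i) observe that $A$ only depends on $q_\xi$ as a $\bZ/2$-quadratic function on $H_1(\Sigma_{g,b+1},\partial;\bZ/2)$ relative to the symplectic/intersection form, together with the fixed boundary data $\delta_j$; (ii) note that a Dehn twist $\tau_c$ acts on $H_1$ by a transvection $x\mapsto x+\langle c,x\rangle c$ and, by the displayed formula, acts on $q_\xi$ by precomposition with that symplectic transvection (the "$+1$" shift in $q_\xi(c)+1$ being exactly the discrepancy between $q_\xi$ and the associated $\bZ/2$-quadratic form, which the normalisation was designed to produce). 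Hence $\tau_c^*\xi$ has quadratic function $q_\xi\circ(\tau_c)_*^{-1}$.

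The key step is then the invariance of the Arf invariant under symplectic automorphisms: for a nondegenerate $\bZ/2$-valued quadratic refinement $q$ of a symplectic form on a vector space $V$, and any $\phi \in \mathrm{Sp}(V)$, one has $\mathrm{Arf}(q\circ\phi) = \mathrm{Arf}(q)$, since $\mathrm{Sp}(V)$ is generated by transvections and each transvection either fixes $q$ or changes it in a way that preserves Arf — this is classical (Arf, Dieudonn\'e). For the closed or one-boundary case this finishes it immediately. For $b\geq 1$ boundary components I would treat the boundary curves $r_j$ and the twists $\tau_{\partial_i}$ about boundary-parallel curves separately: $\tau_{\partial_i}$ fixes each $a_k,b_k$ and each $r_j$ up to the correction in \eqref{eq:QuadraticProperty}, and a direct substitution into \eqref{eq:ArfInvariant} using $\langle \partial_i, a_k\rangle = \langle\partial_i,b_k\rangle = 0$ shows the sum is unchanged; the interplay of the $\sum q_\xi(r_j)\delta_j$ term with twists that permute or recombine boundary arcs is checked by the same bilinear bookkeeping.

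The main obstacle I anticipate is the boundary term $\sum_j q_\xi(r_j)\delta_j$: the clean "Arf invariant is $\mathrm{Sp}$-invariant" argument applies to the closed surface, but here $q_\xi$ lives on $H_1(\Sigma_{g,b+1},\partial;\bZ/r)$, which is not symplectic when $b\geq 1$, and the function $A$ is only $\Gamma^+$-invariant, not invariant under all automorphisms of this larger module — so one genuinely needs that $\Gamma^+(\Sigma_{g,b+1})$ acts through symplectic automorphisms of the \emph{closed-up} surface together with controlled behaviour on the arc classes $r_j$. Concretely, the delicate point is verifying that the twists in a chosen generating set act on the tuple $(q_\xi(a_i),q_\xi(b_i),q_\xi(r_j))$ compatibly with the correction terms in \eqref{eq:QuadraticProperty}, so that the quadratic expression \eqref{eq:ArfInvariant} — which mixes a genuine Arf piece and a linear piece — is left unchanged; I expect this to come down to a short but careful case analysis over the standard generators, each case being a one-line substitution into the displayed action formula.
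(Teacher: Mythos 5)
Your plan correctly reduces to checking Dehn twists, and your observation that the ``Arf part'' of $A$ is classically $\mathrm{Sp}$-invariant is a legitimate shortcut for the $b=0$ case. But you have explicitly deferred the only part of the proposition that is actually new: the behaviour of the boundary term $\sum_j q_\xi(r_j)\delta_j$. You write that this ``is checked by the same bilinear bookkeeping'' and that you ``expect this to come down to a short but careful case analysis,'' but you do not carry it out, and it is not as routine as you suggest. Two concrete problems arise.

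First, your proposed generating set (twists about $a_i$, $b_i$, curves joining consecutive handles, and boundary-parallel curves $\partial_i$) is not sufficient for $b\geq 2$: one also needs twists about curves that separate off, or run between, several boundary components, and these act nontrivially on the arc coordinates $q_\xi(r_j)$ via the $\langle c, r_j\rangle$ terms. The paper sidesteps this entirely by computing the effect of $\tau_x$ for an \emph{arbitrary} simple closed curve $x$ with homology class $\sum X_i a_i + Y_i b_i + \sum_{j=0}^b \lambda_j\partial_j$, so no choice of generating set is needed. Second, and more seriously, the ``bilinear bookkeeping'' for the boundary term does not close without using the identity $\sum_{j=0}^b \delta_j = 0 \in \bZ/2$, which holds because the surface is orientable and carries a $2$-Spin boundary condition. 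In the paper's computation this is exactly what converts $\sum_{j\geq 1}(\lambda_0+\lambda_j)\delta_j$ into $\sum_{j\geq 0}\lambda_j\delta_j$, allowing it to combine with the genus terms to reproduce $q_\xi(x)$ and hence vanish when $q_\xi(x)=0$. Your proposal never invokes this constraint, and without it the boundary contribution to $A(\tau_x^*\xi)-A(\xi)$ would not cancel. Until you supply both the full generating set (or, better, adopt the paper's strategy of handling an arbitrary simple closed curve at once) and the use of $\sum\delta_j=0$, the proof is genuinely incomplete.
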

\begin{proof}
Let $x$ be a simple closed curve represented in homology by $\sum_{i=1}^g X_i a_i + Y_i b_i + \sum_{j=0}^b \lambda_j \partial_j$. Then a 2-Spin structure $\xi = (A_1, B_1, \ldots, A_g, B_g, R_1, \ldots, R_b)$ evaluated on $x$ gives
$$q_\xi(x) = q_\xi \left(\sum_{i=1}^g X_i a_i + Y_i b_i + \sum_{j=0}^b \lambda_j \partial_j \right) = \sum_{i=1}^g X_i A_i + Y_i B_i + X_i Y_i + \sum_{j=0}^b \lambda_i \delta_i$$
using (\ref{eq:QuadraticProperty}). The Dehn twist around this curve satisfies
\begin{eqnarray*}
q_{\tau_x^*\xi}(a_i) = A_i + Y_i(q_\xi(x)+1)\\
q_{\tau_x^*\xi}(b_i) = B_i + X_i(q_\xi(x)+1)\\
q_{\tau_x^*\xi}(r_j) = R_j + (\lambda_0 + \lambda_j)(q_\xi(x)+1)
\end{eqnarray*}
so if $q_\xi(x)=1$ then $q_{\tau_x^* \xi} = q_\xi$ and the invariant is trivially preserved. If $q_\xi(x)=0$ then
\begin{eqnarray*}
\sum_{i=1}^g q_{\tau_x^*\xi}(a_i) \cdot q_{\tau_x^*\xi}(b_i) & = & \sum_{i=1}^g A_i B_i + \left ( \sum_{i=1}^g A_i X_i + B_i Y_i + X_i Y_i \right )
\end{eqnarray*}
and
\begin{eqnarray*}
\sum_{j=1}^r q_{\tau_x^*\xi}(r_i) \cdot \delta_j & = & \sum_{j=0}^r R_j \delta_j + \left ( \sum_{j=1}^b (\lambda_0 + \lambda_j)\delta_j \right )\\
 & = & \sum_{j=0}^r R_j \delta_j + \left ( \sum_{j=0}^b  \lambda_j\delta_j \right )
\end{eqnarray*}
and $0 = q_\xi(x) = \sum_{j=0}^b  \lambda_j\delta_j + \sum_{i=1}^g A_i X_i + B_i Y_i + X_i Y_i$ so $A(\xi) = A(\tau_a^*\xi)$.
\end{proof}

Thus we have produced a $\Gamma^+(\Sigma_{g, b+1})$-invariant function $A : \theta^2(\Sigma_{g, b+1}) \to \bZ/2$. This induces $\Gamma^+(\Sigma_{g, b+1})$-invariant functions on all $\theta^{2n}(\Sigma_{g,b+1})$, $n \geq 0$, by composing with the natural map $\theta^{2n}(\Sigma_{g,b+1};\delta) \to \theta^{2}(\Sigma_{g,b+1};\delta_2)$. The main result of this section is the following theorem, which determines the number of orbits of $\Gamma^+(\Sigma_{g, b+1})$ acting on $\theta^r(\Sigma_{g,b+1};\delta)$.

\begin{thm}\label{thm:OrbitCount}
Let $g \geq 2$. If the set $\theta^r(\Sigma_{g,b+1};\delta) / \Gamma^+(\Sigma_{g,b+1})$ is not empty:
\begin{enumerate}[(i)]
\item It consists of a single element, if $r$ is odd.

\item It consists of two elements distinguished by the invariant $A$, if $r$ is even.
\end{enumerate}

Let $g \geq 1$. If the set $\theta^2(\Sigma_{g,b+1};\delta) / \Gamma^+(\Sigma_{g,b+1})$ is not empty it consists of two elements distinguished by the invariant $A$.
\end{thm}

In order to prove this theorem we require the following lemma which describes the action of the mapping class group $\Gamma^+(\Sigma_{g,b+1})$ on the set $\theta^r(\Sigma_{g,b+1};\delta)$, in the coordinate system given by Proposition \ref{prop:Coordinates}.

\begin{lem}Translating the action of $\Gamma^+(\Sigma_{g,1})$ on $\theta^r(\Sigma_{g, 1};\delta)$ to $(\bZ/r)^{2g}$, Dehn twists around the cycles $a_i$ and $b_i$ give
$$\tau_{a_i}^{\pm 1} \cdot (A_1, B_1,\ldots, A_g, B_g) = (A_1, B_1, \ldots, B_{i-1}, A_i, B_i \pm (A_i+1), A_{i+1}, \ldots, A_g, B_g),$$
$$\tau_{b_i}^{\pm 1} \cdot (A_1, B_1,\ldots, A_g, B_g) = (A_1, B_1, \ldots, B_{i-1}, A_i \mp (B_i+1), B_i , A_{i+1}, \ldots, A_g, B_g).$$
Dehn twists around the cycles $t_i$ give that $\tau_{t_i} \cdot (A_1, B_1,\ldots, A_g, B_g)$ is
$$(A_1, \ldots, A_i, B_i+A_i-A_{i+1}-1, A_{i+1}, B_{i+1}+A_{i+1}-A_i+1, A_{i+2}, \ldots, B_g).$$
The analogous formulae hold for more than one boundary.
\end{lem}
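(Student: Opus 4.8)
The statement is a direct computation: apply the Dehn-twist formula
$$q_{\tau_a^*\xi}(x) = q_\xi(x) + \langle a,x\rangle(q_\xi(a)+1)$$
(established in the lemma just before Lemma~\ref{lem:ThetaStructFundClass}'s corollary) to each of the generating curves $a_i, b_i, t_i$ in turn, with $x$ ranging over the coordinate curves $a_1,b_1,\dots,a_g,b_g$. Everything reduces to knowing the intersection numbers $\langle a_i, a_j\rangle$, $\langle a_i,b_j\rangle$, $\langle b_i,b_j\rangle$, $\langle t_i, a_j\rangle$, $\langle t_i,b_j\rangle$ from Figure~\ref{fig:OrientableCycles}, together with the value $q_\xi(t_i)$ in terms of the coordinates $(A_1,B_1,\dots,A_g,B_g)$.

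First I would record the intersection data. From the standard picture, $a_i\cap b_j=\delta_{ij}$ and $a_i\cap a_j=b_i\cap b_j=0$, so $\langle a_i,b_j\rangle=\pm\delta_{ij}$ (sign fixed by orientation) and $\langle a_i,a_j\rangle=\langle b_i,b_j\rangle=0$. Feeding $x=a_j$ into the twist formula for $\tau_{a_i}$ gives $\langle a_i,a_j\rangle=0$, so $q$ is unchanged; feeding $x=b_j$ gives a change by $\langle a_i,b_j\rangle(q_\xi(a_i)+1)=\pm\delta_{ij}(A_i+1)$, only in the $i$-th $B$-coordinate. That is precisely the first displayed formula, with the $\pm$ absorbing the sign convention and the twist direction. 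The formula for $\tau_{b_i}$ is symmetric, using $\langle b_i,a_j\rangle=-\langle a_i,b_j\rangle$, which accounts for the opposite sign ($\mp$) appearing there.

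For the $t_i$ twists I would first compute $q_\xi(t_i)$. Since $t_i$ is homologous to $a_{i+1}-a_i$ (as noted in the text) and $\langle a_{i+1}, -a_i\rangle = 0$, the quadratic property \eqref{eq:QuadraticProperty} gives $q_\xi(t_i) = q_\xi(a_{i+1}) + q_\xi(-a_i) + \langle a_{i+1},-a_i\rangle$; here one must use that $q_\xi(-a_i) = q_\xi(a_i)$ (the invariant depends only on the unoriented curve, since reversing orientation reverses the forwards vector field consistently and the normalisation by $-1$ respects this) to get $q_\xi(t_i) = A_{i+1} + A_i$ — wait: checking the asserted answer, the correction terms are $\pm(B_i + A_i - A_{i+1} - 1)$ etc., so in fact $q_\xi(t_i)+1 = A_i - A_{i+1}$, consistent with $Mon(t_i) = A_i - A_{i+1}$ up to the normalisation; I would verify the exact constant by a direct lift-to-$\bS\Sigma$ computation rather than trusting the homology-invariance heuristic, which is the one subtle point. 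Then $t_i$ meets $b_i$ once and $b_{i+1}$ once (with opposite signs, since $[t_i]=[a_{i+1}]-[a_i]$), and meets all other coordinate curves trivially; applying the twist formula to $x=b_i$ and $x=b_{i+1}$ with $\langle t_i,b_i\rangle = -1$, $\langle t_i,b_{i+1}\rangle=+1$ (or the reverse) yields changes of $\mp(q_\xi(t_i)+1)$ in $B_i$ and $\pm(q_\xi(t_i)+1)$ in $B_{i+1}$, matching the stated formula once $q_\xi(t_i)+1$ is substituted.

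The only genuine obstacle is bookkeeping of signs: the intersection-product sign conventions, the orientation-induced sign in $\langle a_i,b_i\rangle$, and the sign in $[t_i]=\pm([a_{i+1}]-[a_i])$ must all be pinned down from Figure~\ref{fig:OrientableCycles} so that the constants $+1$ and $-1$ in the $t_i$-formula come out with the exact signs displayed (note the asymmetry: $+A_i - A_{i+1} - 1$ in the $b_i$-slot versus $+A_{i+1} - A_i + 1$ in the $b_{i+1}$-slot, which is forced by the opposite intersection signs). Once a consistent convention is fixed — and it suffices to fix it so that the $\tau_{a_i}, \tau_{b_i}$ formulas come out as stated — the $t_i$ formula is determined, and the extension to $b > 0$ boundary components is immediate since the extra coordinate curves $r_j$ are disjoint from $a_i, b_i, t_i$ and so are unaffected, while the formulas on the $A_i, B_i$ coordinates are unchanged.
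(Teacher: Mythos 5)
Your handling of $\tau_{a_i}$ and $\tau_{b_i}$ matches the paper exactly: both simply apply the Dehn-twist formula $q_{\tau_a^*\xi}(x)=q_\xi(x)+\langle a,x\rangle(q_\xi(a)+1)$ to the coordinate curves and read off the intersection numbers, so that part is fine.

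The gap is in the $\tau_{t_i}$ step, and it is a real one. Your first route, via \eqref{eq:QuadraticProperty} together with the claim ``$q_\xi(-a_i)=q_\xi(a_i)$ since the invariant depends only on the unoriented curve,'' is wrong for general $r$. The lift $\widetilde{-a}$ of the reversed curve is \emph{not} homologous to $\tilde a$ in $\bS F$; in fact $[\widetilde{-a}]=-[\tilde a]$ (the two lifts differ by the fibrewise antipodal map, which is homotopic to the identity through fibre rotations, and then reversing the parametrisation negates the class), so by Lemma~\ref{lem:ThetaStructFundClass} one gets $q_\xi(-a)=-q_\xi(a)-2$, which equals $q_\xi(a)$ only when $r\mid 2$. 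Relatedly, \eqref{eq:QuadraticProperty} as it is used in the paper is a statement at $r=2$ (it is invoked in the proof of Proposition~\ref{prop:ArfInvariantIsInvariant}, which concerns $\theta_2$, and comes from Johnson's spin-structure machinery); applying it naively for general $r$ to $[t_i]=[a_{i+1}]+[-a_i]$ does not produce the right constant. You do sense that something is off and say you would fall back on ``a direct lift-to-$\bS\Sigma$ computation,'' which is indeed the correct route, but you never carry it out. The paper does exactly that: it uses the relation $\tilde t_i\sim\tilde a_{i+1}-\tilde a_i+z$ (stated earlier in the text, the $+z$ being precisely the Euler-characteristic correction that witnesses the failure of homology invariance of lifts) together with Lemma~\ref{lem:ThetaStructFundClass} to get $q_\xi(t_i)=q_\xi(a_{i+1})-q_\xi(a_i)$, and then applies the Dehn-twist formula to $b_i$ and $b_{i+1}$. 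So: same strategy, but the key sub-computation of $q_\xi(t_i)$ is missing from your argument, and the shortcut you float for it is incorrect for $r\neq 2$.
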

\begin{proof}
For the first part, we compute $\tau_{a_i}(q_\xi)(b_i) = q_\xi(\tau_{a_i}(b_i)) = q_\xi(b_i) + q_\xi(a_i)+1$, and so on. For the second part, note that $t_i$ is homologous to $a_{i+1}-a_i$, and that $\tilde{t}_i$ is homologous to $\tilde{a}_{i+1} - \tilde{a}_i + z$. Thus $q_\xi(t_i) = q_\xi(a_{i+1}) - q_\xi(a_i)$,
$$\tau_{t_i}(q_\xi)(b_i) = q_\xi(b_i) - (q_\xi(t_i)+1) = q_\xi(b_i) + q_\xi(a_i) - q_\xi(a_{i+1}) - 1$$
and
$$\tau_{t_i}(q_\xi)(b_{i+1}) = q_\xi(b_{i+1}) + (q_\xi(t_i)+1) = q_\xi(b_{i+1}) - q_\xi(a_i) + q_\xi(a_{i+1}) + 1$$
which establishes the required formula.
\end{proof}

\begin{proof}[Proof of Theorem \ref{thm:OrbitCount}]
Consider the element
$$(A_1, B_1, \dots, A_g, B_g, R_1, \dots, R_b) \in \theta^r(\Sigma_{g, b+1};\delta).$$
We will show how to reduce this to an element in standard form.

By iteratedly applying $\tau_{a_i}$ and $\tau_{b_i}$ we can reduce the pair $(A_i, B_i)$ to the form $(-1, N)$. It is easy to check that the number $\gcd(A_i+1, B_i+1) \in \bZ/r$ is invariant under $\tau_{a_i}$ and $\tau_{b_i}$, so that $N = \gcd(A_i+1, B_i+1)-1$; let us call this $G(A_i, B_i)$. It is not necessary for the proof, but to relate this to the function $A$ we remark that $G(A_i, B_i) \equiv A_i B_i \,\, \mathrm{mod} \,\, 2$. Hence
\begin{equation}\label{eq:1}
(A_1, B_1, A_2, B_2, \dots, A_g, B_g, R_1, \dots R_b)
\end{equation}
is equivalent to
\begin{equation}\label{eq:2}
(-1, G(A_1, B_1), -1, G(A_2, B_2), \dots, -1, G(A_g, B_g), R_1, \dots R_b).
\end{equation}
Applying $\tau_{t_i}$ sends
$$(-1, G(A_i, B_i), -1, G(A_{i+1}, B_{i+1}))\quad \text{to} \quad(-1, G(A_i, B_i)-1, -1, G(A_{i+1}, B_{i+1})+1),$$
so applying it iteratedly to (\ref{eq:2}) gives
\begin{equation}\label{eq:3}
\bigg(-1, 0, -1, 0, \dots, -1, \sum_{i=1}^gG(A_i, B_i), R_1, \dots R_b\bigg).
\end{equation}
Let us write $N:= \sum_{i=1}^gG(A_i, B_i)$ temporarily. We will now show how to reduce the coordinates $R_i$ to a more standard form. Define the simple closed curve $u_i$ to be the connected sum of $a_g$ and $\partial_i$, formed as indicated in Figure \ref{fig:OrientableCycles} to only intersect $b_g$, so $q_\xi(u_i) = A_g + \delta_i$. Twisting along $u_1$ gives
$$\tau_{u_1} \cdot (-1,0,-1,0, \ldots, -1, N, R_1, \ldots, R_r) = (-1,0,-1,0, \ldots, -1, N + \delta_1, R_1+\delta_1, \ldots, R_r),$$
and twisting backwards around $\partial_1$ gives $(-1,0,-1,0, \ldots, -1, N + \delta_1, R_1 - 1, \ldots, R_r)$. Repeating $R_1$ times gives $(-1,0,-1,0, \ldots, -1, N + R_1 \delta_1, 0, R_2, \ldots, R_r)$, and continuing in this way, by twisting around $u_2$, $u_3$, and so on, we can arrive at
\begin{equation}
(-1,0,-1,0, \ldots, -1, \sum_{i=1}^gG(A_i, B_i) + \sum R_i\delta_i, 0, \ldots, 0).
\end{equation}
This much holds for $g \geq 1$. In particular, if $r=2$ we have reduced to one of two possibilities, which are distinguished the invariant $A$, so we ahve proved the second part of the theorem.

If $g \geq 2$ we may apply the sequence
\begin{eqnarray}
\nonumber (-1,0, -1, N) \overset{\tau_{t_{g-1}}^2}\sim (-1,-2,-1, N+2) \overset{\tau_{b_{g-1}}}\sim (0,-2,-1,N+2)\\
\nonumber \overset{\tau_{a_{g-1}}^2}\sim (0, 0, -1, N+2) \overset{\tau_{b_{g-1}}}\sim (-1,0,-1,N+2)
\end{eqnarray}
to find that there are at \textit{most} two orbits. If $r$ is even the invariant $A$ shows there are at \emph{least} two orbits, and we are done. If $r$ is odd note that we have put everything in the form $(-1,0,\dots, -1, X, 0, \dots, 0)$ for $X \in \bZ/r$, but this is equivalent to $(-1,0,\dots, -1, X+2, 0, \dots, 0)$. Thus there is a single orbit in this case.
\end{proof}

\subsection{Gluing $\theta^r$-surfaces}\label{sec:GluingThetaRSurfaces}

In this section we will discuss how the invariant $A$ behaves with respect to gluing $\theta^r$-surfaces. In order to do so effectively, it is convenient to discuss \emph{connected cobordisms with $\theta^r$-structure}. This simply means that we have designated incoming and outgoing boundaries, the marked points on the outgoing boundaries are framed using the boundary orientation and the inwards normal vector, and the marked points on the incoming boundaries are framed using the boundary orientation and the outwards normal vector. Furthermore, we have an ordering of first the outgoing boundaries and then the incoming boundaries, with respect to which we compute the generalised Arf invariant. We call this data a \emph{prepared $\theta^r$-surface}.

Given cobordisms with boundary condition $(\Sigma, \delta)$ and $(\Sigma', \delta')$, and an identification $\psi : {\partial}_{out} \Sigma \cong {\partial}_{in} \Sigma'$ between incoming and outgoing boundaries such that $\psi^*(\delta'\vert_{\partial_{in}\Sigma'}) = \delta\vert_{\partial_{out}\Sigma}$, we obtain a gluing map
$$G_\psi : \theta^r(\Sigma;\delta) \times \theta^r(\Sigma';\delta') \lra \theta^r(\Sigma \cup_\psi \Sigma';\delta \cup \delta')$$
which induces a map
$$G_\psi : \theta^r(\Sigma;\delta) / \Gamma^+(\Sigma) \times \theta^r(\Sigma';\delta') / \Gamma^+(\Sigma) \lra \theta^r(\Sigma \cup_\psi \Sigma';\delta \cup \delta') / \Gamma^+(\Sigma \cup_\psi \Sigma').$$
Under the identification of $\theta^r(\Sigma;\delta) / \Gamma^+(\Sigma)$ with $\pi_0(\mathcal{M}^{\theta^r}(\Sigma;\delta))$, this is nothing but the gluing map between these moduli spaces, at the level of $\pi_0$. 

We will determine the effect of this map on the invariant $A$. In order for this to be meaningful, we must declare how to impose the data of a prepared $\theta^r$-surface on $(\Sigma \cup_\psi \Sigma';\delta \cup \delta')$: we use the marked points of the two surfaces and the induced ordering on the unglued boundaries.

\begin{lem}\label{lem:AddivityOfA}
Let $r$ be even, and $\Sigma$ and $\Sigma'$ be connected prepared $\theta^r$-surfaces with a gluing datum $\psi$ as above. The function $A : \theta^2(\Sigma \cup_\psi \Sigma'; \delta \cup \delta') \to \bZ/2$ evaluated on the glued $\theta^r$-structure $G_\psi(\ell_\Sigma, \ell_{\Sigma'})$ is given by
\begin{equation*}
A(\ell_\Sigma) + A(\ell_{\Sigma'}) + {\delta}\vert_{\partial_{out} \Sigma}.
\end{equation*}
\end{lem}
\begin{proof}
This is immediate from the formula for $A$, by considering the new genus that may be formed in such a gluing. There are two important points:
\begin{enumerate}[(i)]
\item When gluing two arcs together at a single end, the total monodromy along them adds, and so the value of the invariant adds, \emph{but then one is added}.
\item When two arcs are joined to create a new simple closed curve, the value of the invariant along it is the sum of the values along the two curves. In light of the previous point this is counterintuitive, but has to do with the canonical ``straightening'' done to arcs ending at the outgoing boundary or starting at the incoming boundary: the two half turns do not cancel, they \emph{add}.\qedhere
\end{enumerate}
\end{proof}

In trying to understand this lemma, we remark that the reader should be aware that it is \emph{only stated for connected surfaces}. In fact one may prove that there is no function from Spin surfaces with boundary to $\bZ/2$ which is additive for gluing and disjoint union, and agrees with the Arf invariant on closed surfaces. 

\begin{cor}\label{cor:GluingBij}
Let $(\Sigma', \xi')$ be a surface with $\theta^r$-structure, $\Sigma$ have a boundary condition $\delta$, and $\psi$ be a gluing datum as above. Then the map
$$G_\psi(-,\xi') : \theta^r(\Sigma;\delta)/\Gamma(\Sigma) \lra \theta^r(\Sigma \cup_\psi \Sigma';\delta \cup \delta')/\Gamma(\Sigma \cup_\psi \Sigma')$$
is a bijection as long as either
\begin{enumerate}[(i)]
\item $\Sigma$ has genus at least 2, or
\item $r=2$ and $\Sigma$ has genus at least 1.
\end{enumerate}
\end{cor}
\begin{proof}
In the first case, by Theorem \ref{thm:OrbitCount} both sets are singletons if $r$ is odd, so there is nothing to prove, or consist of two elements distinguished by the invariant $A$ if $r$ is even. In this case, the formula of Lemma \ref{lem:AddivityOfA} shows that $A \circ G_\psi(-, \xi')$ is surjective, so $G_\psi(-, \xi')$ is too which proves the claim. The second case is the same, using the improved part of Theorem \ref{thm:OrbitCount} in the case $r=2$.
\end{proof}

The final tool we shall need is

\begin{lem}\label{lem:SurjGenus1}
In genus 1 the maps
$$\theta^r(\Sigma_{1,b};\delta) / \Gamma^+(\Sigma_{1,b}) \lra \theta^r(\Sigma_{1,b+1};\delta') / \Gamma^+(\Sigma_{1,b+1})$$
that glue on a new boundary are surjective. 
\end{lem}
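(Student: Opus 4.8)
The plan is to show that every $\Gamma^+(\Sigma_{1,b+1})$-orbit of $\theta_r$-structures on $\Sigma_{1,b+1}$ is hit by a $\theta_r$-structure on $\Sigma_{1,b}$ glued on a new boundary. The gluing in question adjoins a cobordism from one boundary circle to two boundary circles --- concretely, a pair of pants glued along its waist to one of the boundary components of $\Sigma_{1,b}$, realised by the map $\beta$ in the notation of the introduction. So I need to understand the image of the induced map on $\pi_0$, and the natural tool is the coordinate system $p$ of Proposition \ref{prop:Coordinates} together with the explicit Dehn twist formulae of the Lemma preceding Theorem \ref{thm:OrbitCount}.

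The key steps, in order. First, I would fix the coordinates $p(\xi) = (A_1, B_1, R_1, \dots, R_{b+1})$ on $\theta_r(\Sigma_{1,b+1};\delta')$, where $R_{b+1}$ is the coordinate attached to the arc $r_{b+1}$ running to the newly created boundary component. Second, I would identify exactly which tuples arise from gluing: a $\theta_r$-structure pulled back from $\Sigma_{1,b}$ is, after the gluing, standard on a neighbourhood of the new pair of pants, so the value $q_\xi$ on the arc linking the two new boundary circles is forced, which pins down $R_{b+1}$ (up to the choice of covering isomorphism $\phi$ along the glued circle) in terms of the $\theta_r$-structure on $\Sigma_{1,b}$; concretely the image consists of those tuples whose $\Sigma_{1,b}$-part $(A_1,B_1,R_1,\dots,R_b)$ is arbitrary and whose last coordinate $R_{b+1}$ takes one particular value determined by $\phi$. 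Third --- and this is the crux --- I would use the mapping class group of $\Sigma_{1,b+1}$ to move an \emph{arbitrary} tuple into this image. Here I use that the genus is $\geq 1$: the curves $a_1, b_1$ and the genus-creating moves are available. Twisting around $a_1$ and $b_1$ adjusts $(A_1,B_1)$, and the connected-sum curves $u_i$ (connected sums of $a_1$ with $\partial_i$, as in Figure \ref{fig:OrientableCycles}) together with boundary twists let me trade the value of $R_{b+1}$ against $A_1$, exactly as in the $R_i$-reduction step of the proof of Theorem \ref{thm:OrbitCount}. Because $q_\xi(u_i) = A_1 + \delta_i$ and $A_1$ ranges over all of $\bZ/r$ after twisting around $b_1$, I can realise any desired shift of $R_{b+1}$, hence normalise it to the one value in the image.

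The main obstacle I expect is bookkeeping the effect of the \emph{choice} of covering isomorphism $\phi$ along the glued circle, and correspondingly the normalisation (the $-1$ conventions from the straightening of arcs, cf.\ Lemma \ref{lem:AddivityOfA}): I must make sure that the value of $R_{b+1}$ cut out by the gluing is precisely one I can reach, rather than off by a unit I cannot correct. Since the gluing map is only being analysed at the level of $\Gamma^+$-orbits, I am free to post-compose with any diffeomorphism of the glued surface, and I am also free to vary $\phi$ within its isomorphism class; combining these two freedoms with the $u_i$-twist should close the argument. A clean way to finish is to observe that on the genus-$1$ piece the subgroup generated by $\tau_{a_1}, \tau_{b_1}$ acts transitively enough on the $(A_1,B_1)$-coordinates that, for \emph{every} target value of $R_{b+1}$, some representative of the orbit lies in the image; since every orbit therefore meets the image, the map on orbit sets is surjective.
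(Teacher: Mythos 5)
Your overall plan is workable but takes a genuinely different route from the paper, and the route as written has a gap in the crucial step.

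The paper's proof goes the other way round: it starts from the normal form $(-1,X,0,\dots,0)$ on the \emph{source} $\Sigma_{1,b}$ (valid for $g\geq 1$ by the first part of the proof of Theorem~\ref{thm:OrbitCount}), glues on the fixed pair of pants with coordinates $(R_1,R_2)$, computes the resulting tuple $(-1,X,R_2+1,\dots,R_2+1,R_1)$ explicitly, and then normalises it in the target to $(-1, X+c,0,\dots,0)$ for a constant $c$. Since $X$ ranges over all of $\bZ/r$, so does $X+c$, and every target orbit is hit. Your proposal instead characterises the set-level image of the gluing map (tuples with the coordinate on the new arc fixed) and then argues that $\Gamma^+(\Sigma_{1,b+1})$ carries an arbitrary tuple into that subset. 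Both are legitimate strategies and both must compute the effect of the gluing on coordinates, but the paper's argument is shorter because it never needs to consider an arbitrary target tuple: the single free parameter $X$ already sweeps out everything.

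The gap in your execution: you claim that ``$A_1$ ranges over all of $\bZ/r$ after twisting around $b_1$'', and use this to produce arbitrary shifts of the new arc-coordinate via $\tau_{u_i}$. But $\tau_{b_1}^{\pm 1}$ sends $A_1 \mapsto A_1 \mp (B_1+1)$, so it changes $A_1$ only by multiples of $B_1+1$; in particular if $B_1=-1$ it fixes $A_1$ entirely, and in general it cannot make $A_1$ arbitrary. Likewise your ``clean way to finish'' via transitivity of $\langle \tau_{a_1},\tau_{b_1}\rangle$ on $(A_1,B_1)$ is not available: the orbits of this subgroup are distinguished by $\gcd(A_1+1,B_1+1)$, and in any case $\tau_{a_1},\tau_{b_1}$ do not touch the arc coordinates at all, so they cannot by themselves place a representative into the image. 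The correct repair is the one the paper uses in Theorem~\ref{thm:OrbitCount}: first normalise $(A_1,B_1)$ to $(-1,\ast)$, so that $q_\xi(u_i)+1 = A_1+\delta_i+1 = \delta_i$; then the combined move $\tau_{u_i}$ followed by $\tau_{\partial_i}^{-1}$ shifts the arc coordinate $R_i$ by exactly $A_1 = -1$ (at the cost of altering $B_1$, which is harmless since that coordinate is free in the image). Iterating lets you set the new arc coordinate to the required value $R_1$, closing the argument. You should also note that the pair of pants' $\theta_r$-structure, hence the forced value of the new arc coordinate, is fixed once the gluing map is fixed, so the worry about the covering isomorphism $\phi$ does not actually arise: the lemma is a claim about each fixed gluing map separately.
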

\begin{proof}
By the proof of Theorem \ref{thm:OrbitCount}, elements of $\theta^r(\Sigma_{1,b};\delta) / \Gamma^+(\Sigma_{1,b})$ may be represented by $(-1, X, 0, \ldots, 0)$. If we glue on the pair of pants represented by $(R_1, R_2)$ along its last boundary to the first boundary of $\Sigma$, we obtain the element
$$(-1, X, R_2+1, \ldots, R_2+1, R_1)$$
which may be reduced to $(-1, X + (R_2+1)\sum_{i=1}^{b-1} \delta_i + R_1 \delta'_1 , 0, \ldots, 0)$. By varying $X$ we see that may obtain every element of the form $(-1, Y, 0, \ldots, 0)$, as required.
\end{proof}

\subsection{Homological stability for framed and $r$-Spin surfaces}

We will now describe how to use the above results to apply the main theorem of \cite{R-WResolution} to prove homological stability for framed and $r$-Spin surfaces. To do so we must necessarily use terminology introduced in \cite{R-WResolution}, but we restrict its use to the proof. Note that the statement for $r=0$ is that for framed surfaces.

\begin{thm}\label{thm:StabilityrSpin}
The moduli spaces of $\theta^r$-surfaces exhibit homological stability. More precisely,
\begin{enumerate}[(i)]
	\item Any $\alpha(g):\mathcal{M}^{\theta^r}(\Sigma_{g, b}) \to \mathcal{M}^{\theta^r}(\Sigma_{g+1, b-1})$ is a homology epimorphism in degrees $6* \leq 2g-2$ and a homology isomorphism in degrees $6* \leq 2g-8$.
	\item Any $\beta(g): \mathcal{M}^{\theta^r}(\Sigma_{g, b}) \to \mathcal{M}^{\theta^r}(\Sigma_{g, b+1})$ is a homology epimorphism in degrees $6* \leq 2g - 4$ and a homology isomorphism in degrees $6* \leq 2g-10$. If one of the created boundary conditions is trivial, it is a split homology monomorphism in all degrees.
	\item Any $\gamma(g): \mathcal{M}^{\theta^r}(\Sigma_{g,b}) \to \mathcal{M}^{\theta^r}(\Sigma_{g, b-1})$ is a homology isomorphism in degrees $6* \leq 2g - 4$. If $b \geq 2$ it is a split homology epimorphism in all degrees, and if $b=1$ it is a homology epimorphism in degrees $6* \leq 2g +2$.
\end{enumerate}
For $r=2$ we can do better. In this case,
\begin{enumerate}[(i)]
	\item Any $\alpha(g):\mathcal{M}^{\theta^2}(\Sigma_{g, b}) \to \mathcal{M}^{\theta^2}(\Sigma_{g+1, b-1})$ is a homology epimorphism in degrees $5* \leq 2g-2$ and a homology isomorphism in degrees $5* \leq 2g-7$.
	\item Any $\beta(g): \mathcal{M}^{\theta^2}(\Sigma_{g, b}) \to \mathcal{M}^{\theta^2}(\Sigma_{g, b+1})$ is a homology epimorphism in degrees $5* \leq 2g - 3$ and a homology isomorphism in degrees $5* \leq 2g-8$. If one of the created boundary conditions is trivial, it is a split homology monomorphism in all degrees.
	\item Any $\gamma(g): \mathcal{M}^{\theta^2}(\Sigma_{g,b}) \to \mathcal{M}^{\theta^2}(\Sigma_{g, b-1})$ is a homology isomorphism in degrees $5* \leq 2g - 3$. If $b \geq 2$ it is a split homology epimorphism in all degrees, and if $b=1$ it is a homology epimorphism in degrees $5* \leq 2g +2$.
\end{enumerate}
\end{thm}
\begin{proof}
In order to apply Theorems 8.1 and 12.4 of \cite{R-WResolution}, we must verify two conditions: that $\theta^r$-structures \emph{stabilise on $\pi_0$ at genus $h$}, and that they are \emph{$k$-trivial}. Theorem \ref{thm:OrbitCount} shows that for $g \geq 2$ the moduli spaces always have 1 (if $r$ is odd) or 2 (if $r$ is even) path components, and Lemma \ref{lem:AddivityOfA} shows that when $r$ is even then stabilisation maps induce bijections on path components. Furthermore, Lemma \ref{lem:SurjGenus1} shows that $\beta$ type stabilisation maps starting in genus $1$ induce surjections on path components, and it follows from Theorem \ref{thm:OrbitCount} and Lemma \ref{lem:AddivityOfA} that the same is true for $\alpha$ type stabilisation maps. Hence $\theta^r$-structures stabilise on $\pi_0$ at genus 2. Now, \cite[Proposition 7.6]{R-WResolution} shows that a formal consequence of stabilising on $\pi_0$ at genus $h$ is being $(2h+1)$-trivial, which implies that $\theta^r$-structures are 5-trivial. In general this is the best we will be able to do, but for $r=2$ we can do a little better and show $\theta^2$ is 4-trivial. Once we have done this, solving the recurrence relations of \cite[\S 7.5]{R-WResolution} with this data gives the stated stability ranges.

To establish 4-triviality of $\theta^2$ we first refer the reader to the definition of 4-triviality in \cite[\S 7.1]{R-WResolution}. Suppose that we are given data $((s, \xi_s), b, (\bar{b}_i, \xi_{\bar{b}_i})_{i=1}^4)$ as in \cite[\S 7.1]{R-WResolution} which is maximal in the sense of \cite[Definition 7.2]{R-WResolution}, and form the commutative square of embeddings of surfaces with $\theta^2$-structure
\begin{equation*}
\xymatrix{
F_{00} \ar@{^(->}[r]\ar@{^(->}[d]& F_{10} \ar@{^(->}[d]\\
F_{01} \ar@{^(->}[r]& F_{11}
}
\end{equation*}
which we must show may be trivialised. As in the proof of \cite[Proposition 7.6]{R-WResolution} we let $\widetilde{F}_{i,j} = F_{i,j} \setminus ([0,1] \times b(\{-1,1\} \times (-1,1)))$ and $\partial_0 = (\partial_{in} F_{00}) \cup ([0,1] \times b(\{-1,1\} \times \{-1,1\}))$, which is part of the boundary of every $\widetilde{F}_{i,j}$. Each of the embeddings $\widetilde{F}_{i,j} \hookrightarrow \widetilde{F}_{k,l}$ may be considered as composing $\widetilde{F}_{i,j}$ with a cobordism (the complement $\widetilde{F}_{k,l} \setminus \mathrm{int}(\widetilde{F}_{i,j})$), and we record these cobordisms in Figure \ref{fig:4TrivrSpin}. We denote by $A, B, \ldots, F$ the boundary conditions that these cobordisms satisfy at their ends, and by $L$ the boundary conditions they satisfy on their free boundary.
\begin{figure}[h]
\begin{center}
\includegraphics[bb=0 0 245 233]{./figures/kTriviality}
\caption{}
\label{fig:4TrivrSpin}
\end{center}
\end{figure}

A trivialisation of the data $((s, \xi_s), b, (\bar{b}_i, \xi_{\bar{b}_i})_{i=1}^4)$ is then easily seen to be equivalent to the data of a cobordism with $\theta^2$-structure $\Sigma_\Delta : D \leadsto A \amalg B$ such that $\Sigma_B \circ \Sigma_\Delta \cong \Sigma_L$ and $\Sigma_\Delta \circ \Sigma_T \cong \Sigma_R$ as surfaces with $\theta^2$-structure, relative to $(E \amalg F) \cup L$ and $D \cup L$ respectively. Consider the commutative square
\begin{equation*}
\xymatrix{
\theta^2(\Sigma_{1,1};A \cup B \cup D \cup L)/\Gamma^+(\Sigma_{1,1}) \ar[d]^{\Sigma_B \circ -}\ar[r]^-{-\circ \Sigma_T}& \theta^2(\Sigma_{1,2};A \cup B \cup E \cup F \cup L)/\Gamma^+(\Sigma_{1,2}) \ar[d]^{\Sigma_B \circ -}\\
\theta^2(\Sigma_{1,2};C \cup D \cup L)/\Gamma^+(\Sigma_{1,2}) \ar[r]^-{-\circ \Sigma_T}& \theta^2(\Sigma_{2,1};C \cup E \cup F \cup L)/\Gamma^+(\Sigma_{2,1}).
}
\end{equation*}
By Corollary \ref{cor:GluingBij} each of these maps is a bijection, and in particular the square is cartesian. As $\Sigma_R \circ \Sigma_T = \Sigma_B \circ \Sigma_L$, it follows by the cartesian property that there exists a $\Sigma_\Delta \in \theta^2(\Sigma_{1,1};A \cup B \cup D \cup L)/\Gamma^+(\Sigma_{1,1})$ such that $\Sigma_\Delta \circ \Sigma_T = \Sigma_L$ and $\Sigma_B \circ \Sigma_\Delta = \Sigma_R$, as required. The argument for stabilisation maps of type $\beta$ is the same.
\end{proof}

\section{Applications of homology stability for framed surfaces}

The methods of Galatius, Madsen, Tillmann and Weiss \cite[Section 7]{GMTW} take the homological stability theorem for framed surfaces and imply a homology equivalence
$$\colim_{g \to \infty} \mathcal{M}^{\fr}(\Sigma_{g,1+1}) =: \mathcal{M}^{\fr}(\Sigma_\infty) \lra \Omega^\infty \mathbf{S}^{-2} \simeq \Omega^2 Q(S^0)$$
where the colimit is formed by gluing on framed tori with two boundary components.
\begin{cor}
The moduli space of framed surfaces $\mathcal{M}^{\fr}(\Sigma_{g,b};\delta)$ has the integral homology of $\Omega^2 Q(S^0)$ in degrees $6* \leq 2g-8$. In particular it has trivial rational homology in these degrees. The same is true for the framed mapping class group $\Gamma^{\fr}(\Sigma_{g, b};\xi)$.
\end{cor}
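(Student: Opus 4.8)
The plan is to deduce the statement by combining the homology equivalence $\mathcal{M}^{\fr}(\Sigma_\infty) \to \Omega^\infty\mathbf{S}^{-2} \simeq \Omega^2 Q(S^0)$ recalled just above with the homological stability theorem, Theorem~\ref{thm:StabilityrSpin}. Gluing on a framed torus with two boundary components factors as a $\beta$-type stabilisation (gluing a pair of pants along its waist) followed by an $\alpha$-type stabilisation (gluing a pair of pants along its legs), so the colimit map $\mathcal{M}^{\fr}(\Sigma_{g,1+1}) \to \mathcal{M}^{\fr}(\Sigma_\infty)$ is the infinite iterate of
$$\mathcal{M}^{\fr}(\Sigma_{g,1+1}) \overset{\beta(g)}{\lra} \mathcal{M}^{\fr}(\Sigma_{g,3}) \overset{\alpha(g)}{\lra} \mathcal{M}^{\fr}(\Sigma_{g+1,1+1}).$$
By parts (i) and (ii) of Theorem~\ref{thm:StabilityrSpin} the map $\alpha(g)$ is a homology isomorphism in degrees $5* \leq 2g-6$ and $\beta(g)$ in degrees $5* \leq 2g-7$, so each such composite, and hence (homology commuting with the colimit) the map $\mathcal{M}^{\fr}(\Sigma_{g,1+1}) \to \mathcal{M}^{\fr}(\Sigma_\infty)$ itself, is a homology isomorphism in degrees $5* \leq 2g-7$. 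With the homology equivalence this identifies $H_*(\mathcal{M}^{\fr}(\Sigma_{g,1+1});\bZ)$ with $H_*(\Omega^2 Q(S^0);\bZ)$ in degrees $5* \leq 2g-7$.

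Next I would pass from $\Sigma_{g,1+1}$ to an arbitrary $\Sigma_{g,b}$ with a boundary condition $\delta$ for which $\mathcal{M}^{\fr}(\Sigma_{g,b};\delta)$ is non-empty (recall $b \geq 1$ always, in the framed case). Using the $\gamma$-stabilisations of Theorem~\ref{thm:StabilityrSpin}(iii) --- homology isomorphisms in degrees $5* \leq 2g-2$ --- to cap off boundaries, the $\beta$-stabilisations of part (ii) to insert boundaries, and the $\pi_0$-analysis of \S\ref{sec:DiffClassesThetaRStruct} (Theorem~\ref{thm:OrbitCount} and Lemmas~\ref{lem:SurjGenus1}, \ref{lem:AddivityOfA}), one connects $(\Sigma_{g,b},\delta)$ to $(\Sigma_{g,1+1},\text{standard})$ by a zig-zag of stabilisation maps each of which is a homology isomorphism in degrees $5* \leq 2g-7$. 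In particular, for $g$ in the relevant range a non-empty $\mathcal{M}^{\fr}(\Sigma_{g,2};\delta)$ has exactly two path components, distinguished by the generalised Arf invariant; by Lemma~\ref{lem:AddivityOfA} these are interchanged after further genus stabilisation by gluing on framed tori, so since the stabilisation maps involved are homology isomorphisms in degrees $5* \leq 2g-7$, all path components in sight have the same homology in that range. This agrees with the fact that the two path components of $\Omega^2 Q(S^0)$ (its $\pi_0$ is $\pi^s_2 \cong \bZ/2$) are mutually homotopy equivalent, being those of an infinite loop space. This proves the first assertion.

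The homotopy groups of $\Omega^2 Q(S^0)$ are the stable stems $\pi^s_{*+2}$, which are finite in positive degrees, so $\Omega^2 Q(S^0)$ has vanishing rational homology in positive degrees; hence so does $\mathcal{M}^{\fr}(\Sigma_{g,b};\delta)$ in degrees $0 < 5* \leq 2g-7$. Finally, as observed after the fibre sequence~(\ref{eq:FibSeqFr}), the space $\mathcal{M}^{\fr}(\Sigma_{g,b};\delta)$ is a disjoint union of spaces $K(\Gamma^{\fr}(\Sigma_{g,b};\xi),1)$, so $H_*(\Gamma^{\fr}(\Sigma_{g,b};\xi);\bZ)$ is the homology of a single path component of $\mathcal{M}^{\fr}(\Sigma_{g,b};\delta)$, which by the above coincides in degrees $5* \leq 2g-7$ with the homology of a path component of $\Omega^2 Q(S^0)$; in particular it too is rationally trivial in positive degrees within that range.

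The proof is essentially an assembly of inputs already established: Theorem~\ref{thm:StabilityrSpin} supplies all the ranges, and the Galatius--Madsen--Tillmann--Weiss argument supplies the stable value. The step demanding the most care is the one in the second paragraph --- verifying that every path component, over every admissible boundary condition and every number of boundary components, is linked to $\mathcal{M}^{\fr}(\Sigma_\infty)$ by stabilisation maps that are homology isomorphisms in the asserted range --- which is where the $\pi_0$-computations of the earlier subsections are fed into the stability theorem.
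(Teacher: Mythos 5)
Your proof is correct and follows the same route the paper implicitly takes (the paper states this corollary without a separate proof, treating it as an immediate consequence of the GMTW homology equivalence together with Theorem~\ref{thm:StabilityrSpin}): factor the genus-stabilisation through $\beta(g)$ and $\alpha(g)$ to get the range $5*\leq 2g-7$, pass to general $(b,\delta)$ by further stabilisation maps, and use the $K(\pi,1)$ observation to transfer the result to $\Gamma^{\fr}$. One small imprecision: Lemma~\ref{lem:AddivityOfA} does \emph{not} show the two Arf-components are necessarily interchanged by gluing a framed torus --- whether the map on $\pi_0\cong\bZ/2$ is the identity or the flip depends on $A(\ell_T)$ and $\delta|_{\partial_0}$ --- but this does not matter for your argument, since the stabilisation map is in any case a bijection on $\pi_0$ (being an $H_0$-isomorphism in the stable range) and, as you note, the two components of $\Omega^2 Q(S^0)$ are mutually homotopy equivalent.
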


\begin{cor}
The abelianisation of $\Gamma^{\fr}(\Sigma_{g,b};\xi)$ is $\bZ/24$, as long as $g \geq 7$.
\end{cor}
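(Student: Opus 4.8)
The plan is to read off $H_1$ directly from the homology equivalence established in the preceding corollary. Since $\mathcal{M}^{\fr}(\Sigma_{g,b};\delta)$ is a disjoint union of $K(\pi,1)$'s, with the path component of $\xi$ homotopy equivalent to $B\Gamma^{\fr}(\Sigma_{g,b};\xi)$, and since $H_1(BG;\bZ)$ is the abelianisation of $G$ for any group $G$, it suffices to compute $H_1$ of a path component of $\Omega^2 Q(S^0) \simeq \Omega^\infty \mathbf{S}^{-2}$, provided that degree $1$ lies in the stable range $5* \leq 2g-7$ of that corollary. For $*=1$ this inequality is precisely the hypothesis $g \geq 6$.

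The key step is then to identify that $H_1$ with a stable homotopy group. I would use that $\Omega^\infty \mathbf{S}^{-2}$ is a grouplike infinite loop space: every path component therefore has abelian fundamental group, all these groups are isomorphic to $\pi_1(\Omega^\infty \mathbf{S}^{-2})$, and every component is a simple space. The abelianised Hurewicz theorem then gives $H_1 \cong \pi_1(\Omega^\infty \mathbf{S}^{-2}) = \pi_1(\mathbf{S}^{-2}) = \pi_3^s$, the third stable homotopy group of spheres, which is classically $\bZ/24$ (generated by the image of the quaternionic Hopf map). Combining with the previous paragraph yields that the abelianisation of $\Gamma^{\fr}(\Sigma_{g,b};\xi)$ equals $H_1(\Gamma^{\fr}(\Sigma_{g,b};\xi);\bZ) \cong \bZ/24$ whenever $g \geq 6$.

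I do not expect a genuine obstacle: the statement is an assembly of the homology equivalence already proved, the standard fact that a path component of an infinite loop space has $H_1$ isomorphic to $\pi_1$, and the classical computation $\pi_3^s \cong \bZ/24$. The only points that require a little care are checking that degree $1$ really falls inside the stability range --- which is exactly what forces $g \geq 6$ --- and making the passage from homology to homotopy precise via the H-space structure on $\Omega^\infty \mathbf{S}^{-2}$ (so that it is irrelevant which path component the group-completion map of the previous corollary hits).
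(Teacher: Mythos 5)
Your argument is correct and matches the paper's proof essentially step for step: deduce $H_1(\Gamma^{\fr}(\Sigma_{g,b};\xi);\bZ)\cong H_1(\Omega^\infty\mathbf{S}^{-2};\bZ)$ from the preceding homology-equivalence corollary once $g\geq 6$ puts degree $1$ in the stable range, then use Hurewicz on the infinite loop space to get $\pi_1(\mathbf{S}^{-2})=\pi_3^s\cong\bZ/24$. Your extra care about component-independence via the H-space structure is a slight elaboration, not a different route.
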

\begin{proof}
The abelianisation is simply the first integral homology of this group, which for $g \geq 7$ coincides with the first integral homology of $\Omega^\infty \mathbf{S}^{-2}$. By Hurewicz' theorem this is the same as $\pi_1(\Omega^\infty \mathbf{S}^{-2}) = \pi_3^s$, the third stable stem, which is well known to be cyclic of order 24.
\end{proof}

A diffeomorphism $\varphi : \Sigma_{g,1} \to \Sigma_{g, 1}$ that preserves a framing $\xi$ (up to isomorphism) represents an element $[\varphi] \in \Gamma^{\fr}(\Sigma_{g, 1};\xi)$ in the framed mapping class group, and one may ask what class it represents in the abelianisation $\bZ/24$.

The mapping torus $\Sigma_{g,1} \to M_\varphi \to S^1$ is a smooth 3-manifold with boundary $\partial \Sigma_{g, 1} \times S^1$, and we may give $M_\varphi$ a stable framing by choosing to take the bounding framing of $S^1$ and the framing $\xi$ along the fibres. On the boundary this framing of $\partial \Sigma_g^1 \times S^1$ bounds a framed solid torus by filling in a trivially framed disc in the $S^1$ direction. Gluing in such a framed solid torus in we obtain a closed framed 3-manifold $M^3$, which represents an element of $\pi_3^s \cong \bZ/24$.

It is well known that the element of $\bZ/24$ represented by a framed 3-manifold $M$ may be computed as follows. $M$ is in particular a Spin 3-manifold, and any closed Spin 3-manifold bounds a Spin 4-manifold, so let $M^3 = \partial W^4$. The first Pontrjagin class of $W$ may be represented by a cocycle which is identically zero on $\partial W = M$ as this is framed, so gives an element $p_1 \in H^4(W, M;\bZ)$. We may then form
$$-\frac{1}{48}\langle p_1, [W, M] \rangle \in \bQ.$$
A different choice $W'$ of Spin manifold means that $W \cup W'$ is a closed Spin manifold, and the two invariants defined above differ by $-\frac{1}{48}\langle p_1, [W \cup W'] \rangle$, which is an integer (it is half the $\hat{A}$-genus of the closed Spin manifold $W \cup W'$, which is an integer by Rokhlin's theorem). Thus we obtain a well defined element of $\bQ/\bZ$ which lies in the subgroup generated by $2/48$, which is isomorphic to $\bZ/24$. Hence, in principle we have described the map $\Gamma^{\fr}(\Sigma_{g,1};\xi) \to \bZ/24$, though in practice it is a matter of some difficulty to evaluate the above characteristic number.

\section{The moduli spaces of $\Pin^\pm$ surfaces}\label{sec:PinSurfaces}

In the introduction we described how to a map $\theta : \X \to BO(2)$ we can associate a moduli space of surfaces with $\theta$-structure, so to define the moduli spaces of $\Pin$-surfaces it is enough to describe this map. This is complicated by the fact that there are two forms of a $\Pin(2)$ group, $\Pin^+(2)$ and $\Pin^-(2)$, which are most easily distinguished by saying that they are the central extensions of $O(2)$ classified by the classes $w_2$ and $w_2 + w_1^2$ in $H^2(BO(2); \bZ/2)$ respectively. Thus there are extensions
$$\bZ/2 \lra \Pin^\pm(2) \overset{\rho^\pm}\lra O(2)$$
and so principal fibrations
\begin{equation}\label{eq:PinPMDefinition}
B\bZ/2 \lra B\Pin^\pm(2) \overset{\theta^\pm}\lra BO(2).
\end{equation}
Let $\gamma_2^{\Pin^\pm} \to B\Pin^\pm(2)$ denote the pullback of the tautological bundle along $\theta^\pm$.

\begin{defn}
For each choice of sign, the \emph{moduli space of $\Pin^\pm$ surfaces} of topological type $F$ and boundary condition $\delta : TF\vert_{\partial F} \to \gamma_2^{\Pin^\pm}$, denoted $\mathcal{M}^{\Pin^\pm}(F;\delta)$, is that associated to the fibration $\theta^\pm$.
\end{defn} 

Much of the basic theory is just as it was for $r$-Spin structures. There is a fibration
$$\Bun_\partial(TF, \gamma^{\Pin^\pm}_2;\delta) \lra \Bun_\partial(TF, \gamma_2;[\delta]),$$
given by compostion with $\gamma_2^{\Pin^\pm} \to \gamma_2$, where $[\delta]$ denotes the composition $TF\vert_{\partial F} \overset{\delta}\to \gamma_2^{\Pin^\pm} \to \gamma_2$. The fibres of this map are either empty (if $F$ does not admit a $\Pin^\pm$-structure compatible with $\delta$), or else are non-canonically (in the same way as for $r$-Spin structures) homotopy equivalent to $\map_*(F/\partial F, B\bZ/2)$. Hence if $\mathcal{M}^{\Pin^\pm}(F;\delta)$ is non-empty there is a principal fibration 
\begin{equation}\label{eq:FibPin}
\map_*(F/\partial F, B\bZ/2) \lra \mathcal{M}^{\Pin^\pm}(F;\delta) \lra \mathcal{M}^{O(2)}(F;[\delta]).
\end{equation}

Both $\Pin^+(2)$ and $\Pin^-(2)$ become isomorphic to $\Spin(2)$ when restricted to covering groups of $SO(2)$. Hence if a surface is orientable and has boundary, a $\Pin^\pm$-structure on it is nothing but a (2-)Spin structure. As we have dealt with this situation in the previous sections, from now on we will assume that we only consider non-orientable surfaces.

\subsection{$\Pin^\pm$ mapping class groups}
We consider the long exact sequence on homotopy groups associated to the fibration (\ref{eq:FibPin}), based at $\xi \in \mathcal{M}^{\Pin^\pm}(F;\delta)$. We may identify the fibre through $\xi$ with $\map_*(F/\partial F, B\bZ/2)$, and if $\partial F$ is non-empty this space is homotopy-discrete, so we obtain an exact sequence on homotopy groups
$$0 \lra \Gamma^{\Pin^\pm}(F;\xi) \lra \Gamma(F) \lra H^1(F, \partial F;\bZ/2) \lra \pi_0(\mathcal{M}^{\Pin^\pm}(F;\delta)) \lra *$$
where $\Gamma^{\Pin^\pm}(F;\xi) := \pi_1(\mathcal{M}^{\Pin^\pm}(F;\delta), \xi)$. This identifies the $\Pin^\pm$ mapping class group as the subgroup of the unoriented mapping class group consisting of those diffeomorphisms which fix the $\Pin^\pm$-structure $\xi$ up to isomorphism.

If $\partial F$ is empty, we obtain instead an exact sequence
$$0 \lra \bZ/2 \lra \Gamma^{\Pin^\pm}(F;\xi) \lra \Gamma(F) \lra H^1(F;\bZ/2) \lra \cdots$$
which identifies the $\Pin^\pm$ mapping class group with an extension by $\bZ/2$ of the subgroup of the unoriented mapping class group of elements fixing the $\Pin^\pm$-structure $\xi$ up to isomorphism. This is analogous to the 2-Spin mapping class group in the case of oriented surfaces.

\subsection{The set of $\Pin^\pm$-structures}
In order to apply the results of \cite{R-WResolution}, we must compute the sets of components $\pi_0(\mathcal{M}^{\Pin^\pm}(F;\delta))$, at least when the genus of $F$ is large. This is the same as the quotient of the set of $\Pin^\pm$-structures
$$\Pin^\pm(F,\delta) := \pi_0(\Bun_\partial(TF, \gamma_2^{\Pin^\pm};\delta)),$$
by the action of the unoriented mapping class group $\Gamma(F)$. The set $\Pin^\pm(F,\delta)$ may be identified with the set of homotopy classes of lifts in the diagram
\begin{equation*}
\xymatrix{
& B\bZ/2 \ar[d]\\
\partial F \ar[r]^-{\delta}\ar@{^(->}[d]& B\Pin^\pm(2)\ar[d]^-{\theta^{\pm}}\\
F \ar[r]^-{[\ell]} \ar@{-->}[ru] & BO(2)
}
\end{equation*}
so is either empty or has a free and transitive acton of $H^1(F, \partial F ; \bZ/2)$.

As in the $r$-Spin case, we choose once and for all a $\Pin^\pm$-structure on $\bR^2$, which induces a \emph{standard} $\Pin^\pm$-structure on any framed rank 2 vector bundle. On the non-orientable surface $S_{n, b+1}$ having some $\Pin^\pm$-structure $\xi$ with boundary condition $\delta$, we choose a marked point on each boundary component, which has a canonical framing coming from the orientation of the boundary and the inwards pointing normal vector. After perhaps changing $\delta$ to an isomorphic boundary condition, we may assume that it is standard at each marked point with respect to this framing.

Let $\{ a_1, \ldots, a_n, \partial_0, \ldots, \partial_b \}$ be the collection of simple closed curves as shown in Figure \ref{fig:NonorientableCycles}, and $\{r_1, \ldots, r_b\}$ be the collection of simple arcs. We also write $a_i + a_j$ and $a_i+a_j+a_k+a_l$ for the simple closed curves shown, indicating the homology class they represent.
\begin{figure}
\begin{center}
\includegraphics[bb=0 0 353 205]{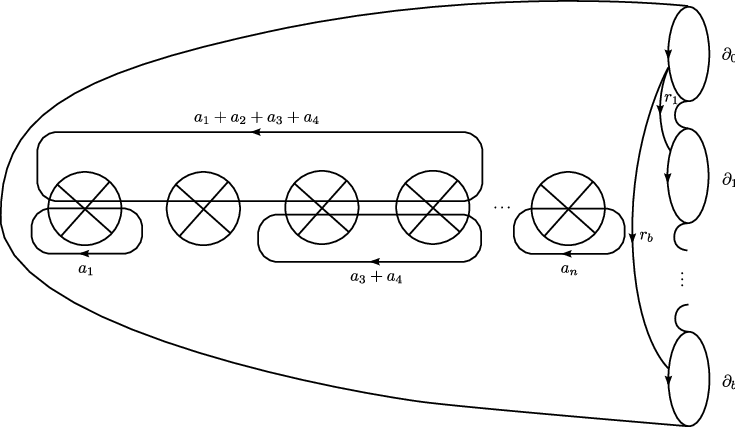}
\end{center}
\caption{Curves and arcs on $S_{n, b+1}$.}
\label{fig:NonorientableCycles}
\end{figure}
There is a presentation of the first integral homology of $S_{n, b+1}$ as
$$H_1(S_{n, b+1};\bZ) = \bZ\langle a_1, \ldots, a_n, \partial_0, \ldots, \partial_b \rangle/\langle \partial_0 + \cdots + \partial_b + 2(a_1 + \cdots + a_n) \rangle.$$

\begin{defn}
Given a $\Pin^+$-structure $\xi$ on $F$, along any simple closed curve $a$ or simple arc $r$ between standard marked points we have a canonical reduction of structure group from $\Pin^+(2)$ to $\Pin^+(1) = O(1) \times \bZ/2$. The first factor detects orientablity of the loop or arc. Taking the value of the monodromy along $a$ or $r$ on the second factor and adding 1 defines $q_\xi(a)$ or $q_\xi(r) \in \bZ/2$.

Given a $\Pin^-$-structure $\xi$ on $F$, along any simple closed curve $a$ or simple arc $r$ between standard marked points we have a canonical reduction of structure group from $\Pin^-(2)$ to $\Pin^-(1) = \bZ/4$. Adding 2 to the monodromy along $a$ or $r$ defines $q_\xi(a)$ or $q_\xi(r) \in \bZ/4$. The reduction of $q_\xi(x)$ modulo 2 detects orientability of the loop or arc.

We write $\Mon(a)$ or $\Mon(r)$ for the monodromy (in $\bZ/2$ or $\bZ/4$) around a curve $a$ or arc $r$ between standard marked points.
\end{defn}

The convention of adding the monodromy around a disc to the actual monodromy in this definition is as a normalisation: now $q_\xi(a)=0$ if and only if $a$ bounds a disc with $\Pin^\pm$-structure. We leave the proof of the fact that the monodromy around a disc is $1$ in the $\Pin^+$ case and $2$ in the $\Pin^-$ case to the reader: it is an interesting exercise in characteristic classes.

\begin{rem}[Boundary conditions]
For each oriented boundary component $\partial_i$ of a surface $F$ with $\Pin^\pm$-structure $\xi$, we may evaluate $q_\xi$ on the simple closed curve $\partial_i$ to get $\delta_i$. For $\Pin^+$-structures this lies in $\bZ/2$ and for $\Pin^-$-structures it lies in $2\bZ/4 = \bZ/2$ as the curve is orientation-preserving. This allows us to identify boundary conditions on $S_{n, b+1}$ for either of these tangential structures with elements of $(\bZ/2)^{b+1}$, and $\delta_i=0$ precisely when this boundary component bounds a disc.
\end{rem}

There is a map $e^+ : \Pin^+(S_{n, b+1};\delta) \to (\bZ/2)^{n+b}$ (or to $(\bZ/2)^{n}$ if the surface is closed) given by $e^+(\xi) = (q_\xi(a_1), \ldots, q_\xi(a_n), q_\xi(r_1), \ldots, q_\xi(r_b))$. Similarly, there is a map $e^-: \Pin^-(S_{n, b+1};\delta) \to (\bZ/4)^{n+b}$ by evaluation on the $a_i$ and $r_i$.

\begin{prop}\label{prop:PinPMAsSet}
If the set $\Pin^+(S_{n, b+1};\delta)$ is non-empty, $e^+$ is a bijection. If the set $\Pin^-(S_{n, b+1};\delta)$ is non-empty, $e^-$ is a bijection onto the subset $\{ [1],[3] \}^n \times \{[0], [2]\}^{b}$.
\end{prop}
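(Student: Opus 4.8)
The plan is to follow the same template as the proof of Proposition~\ref{prop:Coordinates}: the map $e^\pm$ is built by evaluating $q_\xi$ on a fixed collection of curves and arcs, and the claim is that these values freely determine $\xi$ subject only to the constraints imposed by the tangential structure. First I would observe that the curves $a_1,\dots,a_n$ together with the arcs $r_1,\dots,r_b$ form, in an appropriate sense, a basis for building up a $\Pin^\pm$-structure cell by cell: $S_{n,b+1}$ deformation retracts onto the wedge of these curves and arcs, with the complement of a neighbourhood of this wedge being a single disc (whose boundary condition is determined, up to isomorphism, by $\delta$ and is independent of the values of $q_\xi$ on the generators, since changing those values alters the structure only over a $1$-skeleton). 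Thus by obstruction theory for the fibration $\theta^\pm : B\Pin^\pm(2)\to BO(2)$ — exactly as in Proposition~\ref{prop:Coordinates} — once the $\Pin^\pm$-structure is prescribed over the $a_i$ and the $r_j$, extending over the remaining disc is possible, and uniquely so, iff $\Pin^\pm(S_{n,b+1};\delta)$ is non-empty. This gives injectivity of $e^\pm$ and surjectivity onto the set of values realisable on the generators.

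Next I would pin down which tuples are realisable, i.e.\ identify the image. For $\Pin^+$ there is no constraint: along any single simple closed curve $a$ the structure group reduces to $\Pin^+(1)=O(1)\times\bZ/2$, and the $\bZ/2$-factor monodromy can take either value independently of whether $a$ is orientation-preserving or -reversing; likewise the value on an arc $r_j$ is unconstrained. Hence $q_\xi(a_i)$ ranges over all of $\bZ/2$ and $q_\xi(r_j)$ over all of $\bZ/2$, so $e^+$ is a bijection onto $(\bZ/2)^{n+b}$ (resp.\ $(\bZ/2)^n$ in the closed case). For $\Pin^-$ the relevant point is the last sentence of the definition: along a curve $a$ the reduction is to $\Pin^-(1)=\bZ/4$, and the residue of $q_\xi(a)$ mod $2$ records whether $a$ is orientation-preserving or -reversing. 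Each core curve $a_i$ in Figure~\ref{fig:NonorientableCycles} is orientation-reversing, so $q_\xi(a_i)$ lies in the coset $1+2\bZ/4=\{[1],[3]\}$; each arc $r_j$ connects standard marked points along an orientation-preserving path, so $q_\xi(r_j)\in 2\bZ/4=\{[0],[2]\}$. Conversely, by the same obstruction-theoretic freedom, every tuple in $\{[1],[3]\}^n\times\{[0],[2]\}^b$ is realised once $\Pin^-(S_{n,b+1};\delta)$ is non-empty. (In the closed case the boundary data drops out and the image is $\{[1],[3]\}^n$.) This identifies $e^-$ with the asserted bijection.

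The main obstacle I anticipate is the bookkeeping around the arcs $r_j$ and the boundary-condition normalisation, rather than the obstruction theory, which is formal. One must be careful that the ``subtract $1$'' ($\Pin^+$) or ``add $2$'' ($\Pin^-$) normalisation of $q_\xi$ is compatible with the claim that the disc's boundary condition depends only on $\delta$: this is where the computation that the monodromy around a disc is $1$ (resp.\ $2$), deferred to the reader in the text, is actually being used, together with the quadratic/additivity behaviour of $Mon$ under connected sum of curves (the analogue of~\eqref{eq:QuadraticProperty}). I would also need to check that the orientation-character of each $a_i$ and $r_j$ is exactly as claimed from Figure~\ref{fig:NonorientableCycles} and the homology relation $\partial_0+\cdots+\partial_b+2(a_1+\cdots+a_n)=0$ — in particular that the $\partial_i$ are orientation-preserving, consistent with the Boundary conditions remark — but these are routine once the figure is fixed. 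Everything else parallels the $r$-Spin argument verbatim.
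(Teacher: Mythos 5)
Your proof is correct, and the injectivity step coincides with the paper's: both observe that once the structure is fixed on the $a_i$, the $r_j$ and the boundary, only a disc remains, whose $\Pin^\pm$-structures (with fixed boundary) form an empty set or a torsor over the contractible space $\Omega^2 B\bZ/2$, so the extension is unique when it exists. The difference is in how surjectivity onto the asserted image is closed. You argue it directly by obstruction theory, claiming the obstruction on the 2-cell is independent of the values prescribed on the generators (which is true, essentially because the fibration is principal with fiber $B\bZ/2$ and each generator appears in the attaching word with trivial contribution mod 2). The paper instead records the image constraint for $e^-$ and then invokes the remark preceding the proposition — that a non-empty $\Pin^\pm(F;\delta)$ is a torsor for $H^1(F,\partial F;\bZ/2)\cong (\bZ/2)^{n+b}$ — and simply counts: injectivity into a set of the same cardinality forces bijectivity. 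The counting route is shorter and sidesteps having to justify choice-independence of the obstruction (the sentence you offer, ``since changing those values alters the structure only over a $1$-skeleton,'' is not quite the real reason; the relevant point is the structure of the attaching word), whereas your route is closer in spirit to the paper's own proof of Proposition~\ref{prop:Coordinates} and makes the realisability of each tuple visible rather than inferred by cardinality. Both are sound; you might tighten the obstruction-independence claim by appealing explicitly to the torsor structure or the form of the attaching map.
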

\begin{proof}
Certainly both maps are injective: once a $\Pin^\pm$-structure $\xi$ is determined on $a_1, \ldots, a_n, r_1, \ldots, r_b$ and the boundary by $\delta$, it remains to give a $\Pin^\pm$-structure on a disc satisfying a certain boundary condition (which depends only on $\delta$). If this is possible, it is possible in at most one way, as the space of $\Pin^\pm$-structures on the disc is either empty or a torsor for $\Omega^2 B\bZ/2  \simeq *$.

The image of $e^-$ certainly lies in the subset $\{ [1],[3] \}^n \times \{[0], [2]\}^{b}$ due to the values a $\Pin^-$-structure may take around orientation-reversing and -preserving arcs. Now note that $\Pin^\pm(F;\delta)$ is either empty or a $H^1(F, \partial F;\bZ/2)$-torsor, so counting now implies that the maps are bijections.
\end{proof}

\begin{rem}
We should mention to what extent the functions $e^+$ and $e^-$ are unique. We have only defined them for boundary conditions with a choice of standard marked point on each boundary, and an ordering of the boundaries. Varying either of these choices will vary the functions.
\end{rem}

\begin{prop}\label{prop:PinTorsor}
Let $\xi \in \Pin^\pm(F;\delta)$ be a $\Pin^\pm$-structure and $g \in H^1(F, \partial F;\bZ/2)$ be a cohomology class. The torsor structure gives a new $\Pin^\pm$-structure $g\cdot \xi$ and on a simple closed curve or simple closed arc $a$ we have the formula
$$q_{g \cdot \xi}(a) = q_\xi(a) + g(a)\cdot \Mon(D^2),$$
where $\Mon(D^2)$ is $1$ in the $\Pin^+$ case and $2$ in the $\Pin^-$ case.
\end{prop}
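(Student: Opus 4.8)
The plan is to reduce the formula to the defining property of $q_\xi$, namely that it is computed from monodromy of a reduction of structure group, together with the torsor structure on the set of $\Pin^\pm$-structures. First I would recall the identification from the remark above: when $\Bun_\partial(TF, \gamma_2^{\Pin^\pm};\delta)$ is non-empty, its set of path components is a torsor for $H^1(F,\partial F;\bZ/2)$, the action coming from the fibration $\map_*(F/\partial F, B\bZ/2) \to \Bun_\partial(TF,\gamma_2^{\Pin^\pm};\delta) \to \Bun_\partial(TF,\gamma_2;[\delta])$; concretely, a class $g \in H^1(F,\partial F;\bZ/2)$ acts by "twisting" the $\Pin^\pm$-lift of a fixed bundle map $TF \to \gamma_2$ by the map $F/\partial F \to B\bZ/2$ representing $g$.

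Next I would localise the computation to the curve or arc $a$. Restricting $\xi$ and $g\cdot\xi$ to $TF|_a$, the difference between the two $\Pin^\pm$-structures over $a$ is governed by $g$ evaluated on $a$ (i.e.\ by the class $g|_a \in H^1(a, \partial a;\bZ/2)$, which is just $g(a) \in \bZ/2$). The monodromy of the $\Pin^\pm(1)$-reduction obtained via the forwards tangent vector along $a$ therefore changes by the image of $g(a)$ under the map $\bZ/2 = H^1(a,\partial a;\bZ/2) \to \Pin^\pm(1)$ that inserts a twist; when $g(a)=1$ this inserts one copy of the generator of the deck group $\bZ/2 \subset \Pin^\pm(1)$, i.e.\ changes the monodromy element by multiplication by the nontrivial element of the kernel of $\Pin^\pm(1) \to O(1)$. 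I would then observe that by the computation quoted just before the statement (the "interesting exercise in characteristic classes"), this nontrivial kernel element is precisely $Mon(D^2)$: it is $1 \in \bZ/2$ in the $\Pin^+$ case and $2 \in \bZ/4$ in the $\Pin^-$ case. Hence $Mon_{g\cdot\xi}(a) = Mon_\xi(a) + g(a)\cdot Mon(D^2)$, and since $q_\xi$ is obtained from $Mon$ by adding the constant $Mon(D^2)$ (i.e.\ $1$ or $2$), the normalising constant cancels in the difference and we get $q_{g\cdot\xi}(a) = q_\xi(a) + g(a)\cdot Mon(D^2)$, as claimed. The case of a simple arc $r$ is identical once one notes that the canonical "straightening" at the ends is independent of the twisting class $g$, so the same additivity holds verbatim.

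The main obstacle is the bookkeeping in the middle step: pinning down exactly how the $H^1(F,\partial F;\bZ/2)$-action on $\Pin^\pm$-structures, which is defined globally via maps to $B\bZ/2$, restricts to the one-dimensional submanifold $a$ and interacts with the forwards-tangent-vector reduction of structure group to $\Pin^\pm(1)$. One must be careful that $g(a)$ — an element of $\bZ/2$ — lands in the correct central subgroup of $\Pin^\pm(1)$ (the kernel of $\Pin^\pm(1)\to O(1)$, which is $\bZ/2$ in both cases, generated by $1\in\bZ/2$ for $\Pin^+$ and by $2\in\bZ/4$ for $\Pin^-$), and that this central element is the same as the disc monodromy $Mon(D^2)$ invoked in the definition of $q_\xi$. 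Once this identification of central elements is made, the rest is the formal manipulation above; no case analysis on the sign is really needed beyond tracking whether the target is $\bZ/2$ or $\bZ/4$.
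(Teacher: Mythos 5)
Your proposal is correct and is essentially a spelled-out version of the paper's one-line proof, which simply says the claim follows directly from the definition of the torsor structure. You correctly identify that the torsor action of $g \in H^1(F,\partial F;\bZ/2)$ restricts along $a$ to twisting the $\Pin^\pm(1)$-monodromy by $g(a)$ times the generator of $\ker(\Pin^\pm(1)\to O(1))$, that this generator equals $Mon(D^2)$ (being $1 \in \bZ/2$ for $\Pin^+$ and $2 \in \bZ/4$ for $\Pin^-$), and that the normalising constant in $q_\xi = Mon_\xi + Mon(D^2)$ cancels in the difference.
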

\begin{proof}
Direct from the definition of the torsor structure.
\end{proof}

Using Proposition \ref{prop:PinPMAsSet}, we may study the action of $\Gamma(S_{n,b+1})$ on $\Pin^\pm(F;\delta)$ by studying its action in the ``coordinate system'' given by $e^\pm$. To do so we will require various formulae for computing with $q_\xi$.

\begin{lem}\label{lem:PossibleBoundaries}
Let $S_{n,1}$ be a non-orientable surface with $\Pin^\pm$-structure $\xi$. Then $\delta_0 = 0$ for $\Pin^-$, and $\delta_0 = n \,\, \mathrm{mod}\,\,  2$ for $\Pin^+$. Let $\Sigma_{g,1}$ be an orientable surface with $\Pin^\pm$-structure $\xi$. Then $\delta_0 = 0$ for both $\Pin^-$ and $\Pin^+$.

More generally, for a surface with multiple boundaries the same is true for $\sum \delta_i$.
\end{lem}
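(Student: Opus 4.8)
The plan is to compute $\sum_i \delta_i = q_\xi(\partial_0) + \cdots + q_\xi(\partial_b)$ by exploiting the quadratic-type behaviour of $q_\xi$ together with the homology relation $\partial_0 + \cdots + \partial_b + 2(a_1 + \cdots + a_n) = 0$ in $H_1(S_{n,b+1};\bZ)$ recorded just before the definition of $q_\xi$. The key point is that $q_\xi$, while not literally a homomorphism, is computed by lifting a curve to $\bS F$ via its forwards tangent vector and evaluating a fixed cohomology class (an analogue of Lemma~\ref{lem:ThetaStructFundClass}, valid in the $\Pin^\pm$ setting by the same Serre spectral sequence argument, using the appropriate normalisation $Mon(D^2)$); the obstruction to linearity is precisely the Euler-characteristic correction term $z$ that appears when one adds homology classes, which is what forces the $+1$ or $+2$ normalisation. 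So the strategy is: realise the left-hand side of the homology relation as (the tangent lift of) an explicit separating curve bounding a subsurface, track the Euler-characteristic corrections, and read off the value of $\sum \delta_i$.

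First I would treat the orientable case $\Sigma_{g,1}$, where the single claim $\delta_0 = 0$ already follows from Lemma~\ref{lem:PossibleBoundaries}'s $r$-Spin analogue: $\partial_0$ is null-homologous in $H_1(\Sigma_{g,1})$ after capping, it bounds, and with the chosen normalisation $q_\xi$ vanishes on a curve bounding a disc; more carefully $\partial_0$ is homologous to $\sum_i (a_i \# b_i \# \bar a_i \# \bar b_i)$-type combinations whose $q$-values cancel by \eqref{eq:QuadraticProperty}. For the non-orientable case, I would write $\partial_0$ as a connected sum / boundary of the subsurface containing $a_1,\dots,a_n$ and the other boundaries; using the connected-sum formula for $q_\xi$ (the $\Pin^\pm$ analogue of $q_\xi(a\#b) = q_\xi(a)+q_\xi(b)$, with a correction term when crosscaps are involved) one gets $\sum_{i=0}^b \delta_i = 2\sum_{i=1}^n q_\xi(a_i) + (\text{correction})$ in $\bZ/4$ for $\Pin^-$ and in $\bZ/2$ for $\Pin^+$. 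In the $\Pin^-$ case each $a_i$ is orientation-reversing so $q_\xi(a_i) \in \{[1],[3]\}$ is odd, hence $2 q_\xi(a_i) = [2]$ for each $i$; but one must be careful: the correction term from gluing $n$ crosscaps is itself $[2]$ per crosscap as well (this is exactly the phenomenon in Lemma~\ref{lem:AddivityOfA}(i)–(ii) that "the two half turns add"), and these contributions cancel in pairs against the $2q_\xi(a_i)$, leaving $\sum\delta_i = 0$ in $\bZ/4$, i.e.\ $=0$ in $2\bZ/4$. In the $\Pin^+$ case everything is mod $2$: $2q_\xi(a_i) = 0$, and what survives is the mod-$2$ count of crosscaps, giving $\sum \delta_i = n \bmod 2$.

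The cleanest way to make the bookkeeping rigorous is probably not ad hoc connected-sum juggling but the cohomological description: establish once and for all that $\xi$ determines a class $[\xi] \in H^1(\bS S_{n,b+1};\bZ/4)$ (resp.\ $\bZ/2$) restricting correctly on fibres, so that $q_\xi(x) = \langle [\xi], \tilde x\rangle - Mon(D^2)$, and then compute $\sum_i \widetilde{\partial_i}$ inside $H_1(\bS S_{n,b+1})$ directly from the surface relation, picking up one copy of $z$ per term where a nontrivial Euler characteristic is consumed. Applying $[\xi]$ to this and using $\langle[\xi],z\rangle = Mon(D^2)$ (the defining normalisation) converts the relation into the stated numerical identity. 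The main obstacle I anticipate is precisely this last step: correctly counting the number of $z$-corrections, i.e.\ keeping track of the Euler characteristic of the chain bounded by $\partial_0 + \cdots + \partial_b + 2\sum a_i$ versus the individual curves — getting the parity/mod-$4$ arithmetic exactly right so that the $\Pin^-$ answer comes out $0$ rather than $[2]$, and distinguishing why orientability ($\Sigma_{g,1}$) kills even the mod-$2$ term while non-orientability leaves $n \bmod 2$ for $\Pin^+$. The final sentence about $\sum\delta_i$ for multiple boundaries then needs no extra work, since the whole computation was done with all $b+1$ boundary curves present simultaneously.
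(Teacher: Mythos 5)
Your route is genuinely different from the paper's. The paper dispatches this in one line: since $\delta_0$ is precisely the obstruction to capping the boundary with a $\Pin^\pm$-disc, it equals the evaluation of the classifying class for the extension ($w_2$ for $\Pin^+$, $w_2+w_1^2$ for $\Pin^-$) on the closed surface obtained by capping, and these characteristic numbers are $\chi \bmod 2 = n \bmod 2$ and $0$ on $S_n$, and both $0$ on $\Sigma_g$. No unit tangent bundle, no homology relation, no curve combinatorics. Your approach through the lift to $\bS F$ and the relation $\partial_0+\cdots+\partial_b+2\sum a_i = (1-n-b)\cdot z$ does in fact give the right answers (carrying out the mod-$4$ and mod-$2$ arithmetic with $Mon(D^2)=2$, resp. $1$, one gets $\sum\delta_i \equiv 4-4n\equiv 0$ and $\sum\delta_i\equiv n$), but it is not free: you would first have to prove the $\Pin^\pm$ analogue of Lemma~\ref{lem:ThetaStructFundClass}, namely that $\bS\gamma_2^{\Pin^\pm}\simeq B\Pin^\pm(1)$ and that the lift-and-evaluate formula holds, and you would have to verify that the Euler-characteristic correction term for homology chains survives the passage to non-orientable surfaces. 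You also need to be careful that you do not invoke the full quadratic property of Lemma~\ref{lem:QuadProperty} in the non-orientable case, since that lemma's proof relies on the statement you are trying to prove; the direct cohomological route you sketch avoids this, but the connected-sum juggling you gesture at in the middle of the proposal does not obviously. As you yourself flag, the bookkeeping is exactly the hard part, and you have not done it. The short obstruction-theoretic argument sidesteps all of this, which is why the paper uses it; your longer route, if completed, would have the modest advantage of being internal to the $q_\xi$ calculus developed in the section rather than appealing to characteristic classes of the closed surface.
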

\begin{proof}
The value of $\delta_0$ is the value of $w_2 + w_1^2$ or $w_2$ respectively evaluated on $S_n$ or $\Sigma_g$. This is $0$ and $n \,\, \mathrm{mod} \,\, 2$ respectively on $S_n$, and always 0 on $\Sigma_g$.
\end{proof}

\begin{lem}
If $a$ and $b$ are disjoint simple closed curves (or a simple closed and a simple arc), and $a \# b$ is their oriented connected sum, then
$$q_\xi(a \# b) = q_\xi(a) + q_\xi(b).$$
\end{lem}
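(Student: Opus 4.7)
The plan is to compute the Pin$^\pm$ monodromy along $a \# b$ by a local analysis inside a band joining $a$ and $b$, and then to translate the result into a statement about $q_\xi$.

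Choose short subarcs $\alpha \subset a$ and $\beta \subset b$ together with an embedded band $B \cong [0,1]^2$ in $F$ such that $B \cap (a \cup b) = \alpha \cup \beta$, with $\alpha$ and $\beta$ two opposite edges of $B$. Writing $L_1$ and $L_2$ for the other two edges, the oriented connected sum is realised as
\begin{equation*}
a \# b = (a \cup b \setminus (\alpha \cup \beta)) \cup L_1 \cup L_2,
\end{equation*}
with orientations on $L_1$ and $L_2$ chosen so that $a \# b$ is a coherently oriented simple closed curve. Parametrise $a$, $b$, and $a \# b$ compatibly and decompose each monodromy into contributions from portions outside $B$ and portions along edges of $B$. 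Using additivity of monodromy over concatenations (valid because the monodromy takes values in the abelian groups $\bZ/2$ for $\Pin^+$ and $\bZ/4$ for $\Pin^-$), and keeping track of the signs from orientation reversals at corners of $B$, one arrives at
\begin{equation*}
Mon(a \# b) - Mon(a) - Mon(b) = Mon(\partial B),
\end{equation*}
where $\partial B$ is traversed once in the positive direction. Since $\partial B$ bounds the disc $B$, the remark following Proposition \ref{prop:PinPMAsSet} gives $Mon(\partial B) = c_0$, with $c_0 = 1$ in the $\Pin^+$ case and $c_0 = 2$ in the $\Pin^-$ case. Translating back via $q_\xi = Mon + c_0$ and using that $2 c_0 \equiv 0$ in $\bZ/2$ or $\bZ/4$ respectively,
\begin{equation*}
q_\xi(a \# b) = Mon(a \# b) + c_0 = Mon(a) + Mon(b) + 2 c_0 = q_\xi(a) + q_\xi(b).
\end{equation*}
The case where one of $a, b$ is a simple arc with ends on the boundary is handled identically, since the standard framings at the marked endpoints play the role of a basepoint on a closed curve.

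The main obstacle is the bookkeeping of the forward-tangent conventions used in the reduction of structure group from $\Pin^\pm(2)$ to $\Pin^\pm(1)$: this reduction depends on the direction of travel along the curve, and at the four corners of $B$ the forward tangent turns through right angles whose contributions to the monodromies must be correctly apportioned between $Mon(a)$, $Mon(b)$, $Mon(a \# b)$ and $Mon(\partial B)$. One must verify that the four corner rotations, together with the smooth contributions along the edges of $\partial B$, amount to a single full rotation, so are correctly absorbed into $Mon(\partial B)$. This reduces, in the end, to the classical fact that a simple closed curve bounding an embedded disc has total tangent turning number $\pm 2\pi$, with the corresponding Pin$^\pm$ monodromy equal to the monodromy around a disc.
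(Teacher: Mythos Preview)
Your proof is correct and follows essentially the same approach as the paper's: both arguments identify the discrepancy $Mon(a\#b) - Mon(a) - Mon(b)$ with the monodromy around a small disc, and then observe that this exactly cancels against the normalisation constant in the definition of $q_\xi$. The paper's version is slightly slicker in that it first trivialises the $\Pin^\pm$-structure on the small disc where the connect sum is performed, so that the corner bookkeeping you flag as the ``main obstacle'' becomes the bare statement that the total tangent turning in the trivialised region is one full rotation; your band argument spells this out more explicitly but arrives at the same place. One small quibble: the fact that the monodromy around a disc is $1$ (for $\Pin^+$) or $2$ (for $\Pin^-$) appears in the paragraph following the definition of $q_\xi$, not after Proposition~\ref{prop:PinPMAsSet}.
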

\begin{proof}
We may suppose that the connect sum is formed in a small disc on the surface where we have trivialised the $\Pin^\pm$-structure. Hence the difference in total monodromies of $a \sqcup b$ and $a \# b$ is the monodromy around a small disc, and the claim follows as $q_\xi$ is corrected from the actual monodromy by precisely the monodromy around a  disc.
\end{proof}

\begin{lem}\label{lem:QuadProperty}
Let $a$ and $b$ be simple closed curves (or a simple closed and a simple arc), and $c$ be a simple closed curve (or a simple closed arc).
\begin{enumerate}[(i)]
\item For $\Pin^-$, if the $\bZ/2$-homology classes satisfy $[c] = [a] + [b]$, then
$$q_\xi(c) = q_\xi(a) + q_\xi(b) + 2\cdot \langle a, b \rangle \in \bZ/4.$$
\item For $\Pin^+$, if the $\bZ/4$-homology classes satisfy $[c] = [a] + [b]$, then
$$q_\xi(c) = q_\xi(a) + q_\xi(b) + \langle a, b \rangle \in \bZ/2.$$
\end{enumerate}
If $b$ bounds a disc, we are asserting that $q_\xi(a)$ only depends on the ($\bZ/2$ or $\bZ/4$, respectively) homology class of $a$.
\end{lem}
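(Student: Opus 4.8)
The plan is to mimic the proof of the analogous statement \eqref{eq:QuadraticProperty} in the orientable case, which the paper attributes to Johnson's Theorem 1B. The strategy is to reduce the general identity $[c] = [a] + [b]$ to the connected-sum case already handled by the preceding lemma, together with the torsor formula of Proposition \ref{prop:PinTorsor}, via a ``handle-slide'' argument: since $c$, $a$, $b$ are simple closed curves (or arcs) representing homology classes with $[c] = [a]+[b]$, I would first isotope $a$ and $b$ so that they meet transversely in exactly $|\langle a,b\rangle|$ points (the geometric and algebraic intersection numbers can be made to agree since we only care about the $\bZ/2$- or $\bZ/4$-homology class, and by general position / the change-of-coordinates principle for simple closed curves on surfaces). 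The curve $c$ is then homologous to a curve obtained by resolving the intersections of $a$ and $b$ (the oriented smoothing), and the discrepancy between $q_\xi$ of this resolution and $q_\xi(a)+q_\xi(b)$ is a local contribution at each intersection point.

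The key computational step is the local model at a single transverse intersection point: in a small disc on the surface where the $\Pin^\pm$-structure has been trivialised, one has two arcs crossing once, and one compares $q_\xi$ of the two arcs with $q_\xi$ of the single arc obtained by the oriented smoothing. By Lemma \ref{lem:ThetaStructFundClass}-type reasoning (or directly, since in a trivialised disc $q_\xi$ is computed from the winding number of the forward tangent vector plus the normalisation constant $\mathit{Mon}(D^2)$), the smoothing changes the total turning of the tangent framing by a definite amount: each crossing resolution contributes one unit of $\mathit{Mon}(D^2)$, i.e.\ $1$ in the $\Pin^+$ case and $2$ in the $\Pin^-$ case. Summing over the $\langle a,b\rangle$ intersection points (working modulo $2$, resp.\ modulo $4$) gives exactly the correction term $\langle a,b\rangle$ for $\Pin^+$ and $2\langle a,b\rangle$ for $\Pin^-$. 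Combined with $q_\xi(a\#b) = q_\xi(a)+q_\xi(b)$ from the previous lemma for the disjoint case (where there are no crossings), this yields the stated formulae.

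For the final assertion, suppose $b$ bounds a disc. Then $q_\xi(b) = 0$ by the normalisation, and $\langle a,b\rangle = 0$ in $\bZ/2$ since a separating curve has zero intersection with everything; in the $\Pin^-$ case $2\langle a,b\rangle$ vanishes in $\bZ/4$ as well. So the formula gives $q_\xi(c) = q_\xi(a)$ whenever $[c]=[a]$, proving homology-invariance of $q_\xi$. The main obstacle I anticipate is making the local crossing computation rigorous and sign-consistent: one must be careful that the ``oriented'' resolution is the correct one and that the half-turns associated to arcs with standard ends are accounted for exactly as in the arc conventions set up earlier (the subtlety flagged in Lemma \ref{lem:AddivityOfA}(ii) about half-turns adding rather than cancelling), and that the normalisation constant $\mathit{Mon}(D^2)$ enters with the right multiplicity. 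Verifying this on a single well-chosen picture and then invoking additivity over crossings is the heart of the argument; everything else is the standard reduction via general position and the change-of-coordinates principle.
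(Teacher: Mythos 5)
Your local-resolution computation is essentially the same calculation the paper performs, repackaged slightly: the paper's proof first observes that resolving each transverse crossing of $a$ and $b$ into a pair of non-crossing arcs leaves the total monodromy unchanged while producing $N$ disjoint curves, then connect-sums the $N$ pieces back into a single curve picking up one $\mathit{Mon}(D^2)$ per connect-sum, and finally shows $N \equiv \langle a,b\rangle \pmod 2$ by observing that $\#\{\text{components}\}+\#\{\text{crossings}\}$ is preserved under resolution. Your version attributes the $\mathit{Mon}(D^2)$ correction directly to each crossing; the net count is the same, but the paper's accounting separates the ``resolution does nothing'' step from the ``connect-summing costs one normalisation constant each'' step and then closes the ledger with the parity observation. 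One minor slip: you claim one may isotope $a$ and $b$ to meet in exactly $|\langle a,b\rangle|$ points so that ``geometric and algebraic intersection numbers can be made to agree.'' That is false in general and also unnecessary; any transverse representative has a geometric intersection count of the correct parity, which is all the argument requires.

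The genuine gap is in the last paragraph, which is circular. Your resolution-and-smoothing argument constructs one particular simple closed curve $c'$ with $[c']=[a]+[b]$ and shows that $q_\xi(c') = q_\xi(a)+q_\xi(b) + \mathit{Mon}(D^2)\cdot\langle a,b\rangle$. The lemma, however, is a statement about an \emph{arbitrary} simple closed curve $c$ with $[c]=[a]+[b]$. To pass from $c'$ to $c$ you must already know that $q_\xi$ depends only on the $\bZ/2$- (respectively $\bZ/4$-) homology class, and you cannot obtain that by applying ``the formula'' to $c$ in your final paragraph, since the formula has only been proven for the resolved curve $c'$. The paper closes precisely this gap with a separate step: it reduces to showing that a homologically trivial simple closed curve has $q_\xi = 0$, and then invokes the classification of simple closed curves on surfaces (a $\bZ/2$-trivial curve on a non-orientable surface bounds a subsurface; a $\bZ/4$-trivial curve bounds an orientable subsurface) together with Lemma~\ref{lem:PossibleBoundaries} to evaluate the invariant on a bounding curve. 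This homology-invariance argument is a necessary component of the proof, not a corollary of the rest of it, and your proposal is missing it.
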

\begin{proof}
Firstly, we perturb $a$ and $b$ so that they cross transversely. Near each intersection point we can cut out the intersection and glue the four incoming arcs together in pairs, to get a homologous collection of $N$ disjoint simple closed curves (with perhaps a single arc). Each time we do this, the total monodromy around all these curves does not change, so we still have total monodromy $\Mon(a) + \Mon(b)$. When we connect-sum together the $N$ disjoint curves, we end up with a single curve with monodromy
$$\Mon(a) + \Mon(b) + \Mon(D^2) \cdot (N -1),$$
as in the proof of the lemma above, and hence a single curve with invariant
$$q_\xi(a) + q_\xi(b) + \Mon(D^2) \cdot N.$$
We now simply remark that during the process of eliminating intersection points, the value of
$$\#\{\text{components}\} + \#\{\text{intersection points}\} \in \bZ/2$$
is constant, so in particular $N \equiv \langle a, b \rangle$ modulo 2, and the curve we have constructed has invariant $q_\xi(a) + q_\xi(b) + \Mon(D^2) \cdot \langle a, b \rangle$.

It is now enough to prove the homology invariance of $q_\xi$: if $a$ and $c$ are ($\bZ/4$ or $\bZ/2$, respectively) homologous, then $q_\xi(a) = q_\xi(c)$. By taking the difference of $a$ and $c$, and performing the above maneuver to get a single simple closed curve, it is enough to show that if $a$ is homologically trivial then $q_\xi(a)=0$.

To prove this we use the classification of simple closed curves in a surface (which is of course a simple consequence of the classification of surfaces). For the $\Pin^-$ case, a simple closed curve on a non-orientable surface can be trivial in $\bZ/2$-homology only if it bounds a subsurface, then by Lemma \ref{lem:PossibleBoundaries} the value of the invariant on it is zero. For the $\Pin^+$ case, a simple closed curve on a non-orientable surface can be trivial in $\bZ/4$-homology only if it bounds an orientable subsurface, then by Lemma \ref{lem:PossibleBoundaries} the value of the invariant is zero.
\end{proof}

We learnt the above results from the work of Degtyarev--Finashin \cite{Degtyarev-Finashin}, but have adapted the results and proofs in the form that is most convenient to us. Using the above lemma one can extend $q_\xi$ uniquely to a function on the first $\bZ/2$- or $\bZ/4$-homology of $S_{n, b+1}$, and the lemma shows that $q_\xi$ is a quadratic refinement of the intersection form.

We wish to compute the action of the unoriented mapping class group $\Gamma(S_{n, b+1})$ on the sets $\Pin^\pm(S_{n, b+1}; \delta)$. Recall that the unoriented mapping class group is generated by Dehn twists and crosscap slides \cite[Theorem 2]{Lickorish}, and the following lemma describes the action of a crosscap slide on homology. 

\begin{lem}\label{lem:crosscapslide}
The crosscap slide of the crosscap with core $a_1$ over the crosscap with core $a_2$ in Figure \ref{fig:NonorientableCycles}, supported inside an embedded $S_{2,1}$, induces the map
\begin{align*}
a_1 &\longmapsto -a_1\\
a_2 &\longmapsto a_2 +2\cdot a_1\\
a_i &\longmapsto a_i \text{ if $i > 2$}
\end{align*}
on integral homology.
\end{lem}
\begin{proof}
This is immediate from the definition of the crosscap slide.
\end{proof}

We will require the following formulae for the action of Dehn twists on the set of $\Pin^\pm$-structures. Let $a$ be a simple closed curve, $\tau_a$ the Dehn twist around $a$, and let $x$ is a simple arc or simple closed curve. For $\Pin^+$-structures
$$q_{\tau_a^*\xi}(x) =  q_\xi(x) + \langle a,x \rangle(q_\xi(a) - 1) \in \bZ/2,$$
and for $\Pin^-$-structures
$$q_{\tau_a^*\xi}(x) = q_\xi(x) + \langle a,x \rangle(q_\xi(a) - 2) \in \bZ/4.$$
Both equations come from the standard action of Dehn twists on homology, $\tau_a(x) = x + \langle a, x \rangle\cdot a$, and Lemma \ref{lem:QuadProperty}.

\subsection{Diffeomorphism classes of $\Pin^+$-structures}\label{sec:DiffClassesPinPlusStruct}

Once we have a bijection $e^+ : \Pin^+(S_{n, b+1};\delta) \to (\bZ/2)^{n+b}$, we define a function
$$A : \Pin^+(S_{n,b+1};\delta) \lra \bZ/2$$
by the formula $\xi \mapsto \sum_{i=1}^n q_\xi(a_i)$.

\begin{prop}
The function $A$ is $\Gamma(S_{n,b+1})$-invariant.
\end{prop}
\begin{proof}
It is enough to show that it is invariant under Dehn twists and a single crosscap slide, as these generate the mapping class group by \cite[Theorem 2]{BC}. 

Let $a$ be an orientation preserving simple closed curve represented in homology by $\sum A_i a_i + \sum \lambda_j \partial_j \in H_1(S_{n, b+1};\bZ)$, so $\langle a, a\rangle = \sum A_i = 0$. We may then compute
$$\sum_{i=1}^n  q_{\tau_a^*\xi}(a_i) = \sum_{i=1}^n q_\xi( a_i + \langle a_i, a \rangle\cdot a) = \sum_{i=1}^n q_\xi(a_i) + \sum_{i=1}^n A_i (1+q_\xi(a)) = \sum_{i=1}^n q_\xi(a_i) \in \bZ/2$$
so $A$ is invariant under Dehn twists.

Let $c$ be the crosscap slide described in Lemma \ref{lem:crosscapslide}, and calculate using Lemma \ref{lem:QuadProperty} that
\begin{align*}
q_{c^*\xi}(a_1) &= q_\xi(-a_1) = q_\xi(a_1)+1\\
q_{c^*\xi}(a_2) &= q_\xi(a_2+2\cdot a_1) = q_\xi(a_2)+1\\
q_{c^*\xi}(a_i) &= q_\xi(a_i) \text{ if $i > 2$}.
\end{align*}
Thus $A$ is invariant under this crosscap slide, and hence under all diffeomorphisms.
\end{proof}

\begin{prop}\label{prop:PinPlusInvariant}
The induced map $A : \Pin^+(S_{n,b+1};\delta)/\Gamma(S_{n,b+1}) \to \bZ/2$ is a bijection if the source is non-empty and $n \geq 3$. It is a surjection for $n \geq 1$.
\end{prop}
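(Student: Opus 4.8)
The plan is to mimic the structure of the proof of Theorem \ref{thm:OrbitCount}: work entirely in the coordinate system $e^+$, and use Dehn twists around the curves in Figure \ref{fig:NonorientableCycles} to reduce an arbitrary tuple $(q_\xi(a_1),\dots,q_\xi(a_n),q_\xi(r_1),\dots,q_\xi(r_b)) \in (\bZ/2)^{n+b}$ to a normal form, of which there should be exactly two (distinguished by $A$) once $n \geq 3$, and at least one once $n \geq 1$. Since $A$ is already known to be a well-defined $\Gamma(S_{n,b+1})$-invariant function by the previous proposition, it suffices to prove: (a) surjectivity of the induced map for $n \geq 1$, i.e.\ both values $0$ and $1$ of $A$ are realised (here one just exhibits two $\Pin^+$-structures, e.g.\ by changing one $q_\xi(a_i)$ using the torsor structure of Proposition \ref{prop:PinTorsor}, which changes $A$ by $g(a_i)$); and (b) injectivity for $n \geq 3$, i.e.\ any two $\Pin^+$-structures with the same value of $A$ lie in the same $\Gamma$-orbit.

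For (b), I would first kill the arc coordinates $q_\xi(r_j)$. Each $r_j$ is an arc whose endpoints lie on boundary components $\partial_0$ and $\partial_j$; twisting around a suitable orientation-preserving simple closed curve that meets $r_j$ once but is disjoint from the other $r_k$ and from all but one of the $a_i$ (built as a connected sum of some $a_i$ with $\partial_j$, as in the $r$-Spin argument) changes $q_\xi(r_j)$ while only moving one $a_i$-coordinate; iterating lets us assume all $q_\xi(r_j)=0$. Now we must normalise $(q_\xi(a_1),\dots,q_\xi(a_n)) \in (\bZ/2)^n$ using Dehn twists on a closed non-orientable surface of genus $n$. The key move is the twist around $a_i + a_j$: since $q_\xi(a_i+a_j) = q_\xi(a_i)+q_\xi(a_j)+\langle a_i,a_j\rangle = q_\xi(a_i)+q_\xi(a_j)$ (the $a_i$ are disjoint), and $\langle a_i+a_j, a_k\rangle$ is computable from Figure \ref{fig:NonorientableCycles}, such twists let one transfer values among the coordinates. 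The goal is to reach either $(0,0,\dots,0)$ or $(1,0,\dots,0)$ (or some similarly rigid pair) according to the parity $\sum q_\xi(a_i) = A(\xi)$; having $n \geq 3$ coordinates is exactly what gives enough room for these transfers, just as genus $\geq 2$ was needed in Theorem \ref{thm:OrbitCount}.

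The main obstacle I anticipate is the bookkeeping in step (b): identifying, from Figure \ref{fig:NonorientableCycles}, a convenient generating family of orientation-preserving simple closed curves (combinations like $a_i+a_j$ and $a_i+a_j+a_k+a_l$) together with their pairwise intersection numbers and their intersections with the $a_i$ and $r_j$, and then checking that repeated twists around them suffice to reach the normal form without accidentally re-introducing nonzero arc coordinates. One subtlety: twisting around $a_i+a_j$ requires $q_\xi(a_i+a_j)$ to be controlled, and by the Dehn-twist formula for $\Pin^+$ the effect on a curve $x$ is $q_{\tau^*\xi}(x) = q_\xi(x) + \langle a_i+a_j, x\rangle(q_\xi(a_i+a_j)-1)$, so when $q_\xi(a_i)+q_\xi(a_j)=1$ the twist acts trivially and one must instead use a curve with $q_\xi$-value $0$; handling these case distinctions cleanly (and verifying the count does not collapse below two orbits for $n \geq 3$, nor exceed two) is where the real work lies. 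Everything else — surjectivity, the reduction of arcs, and the invariance of $A$ — is routine given the formulae already established.
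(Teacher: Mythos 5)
Your plan is structurally the same as the paper's: work in the coordinate system $e^+$, reduce an arbitrary tuple to a normal form using Dehn twists around the curves in Figure~\ref{fig:NonorientableCycles}, and observe that $A$ distinguishes the normal forms. You have also correctly identified the key subtlety that a twist around $a_i+a_j$ is effective only when $q_\xi(a_i+a_j)=0$, i.e.\ $q_\xi(a_i)=q_\xi(a_j)$, in which case it flips both coordinates. However, there is a genuine error in your arc-killing step, and a difference of ordering from the paper that costs you extra bookkeeping.

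The error: to change $q_\xi(r_j)$ when $\delta_j=1$ you propose to twist around the connected sum of a single $a_i$ with $\partial_j$, claiming this curve is disjoint from all but one of the $a_i$. But a single $a_i$ is orientation-\emph{reversing} (it passes through one cross-cap), so its connected sum with the orientation-preserving $\partial_j$ is again orientation-reversing, and one cannot Dehn twist around it. The curve that actually does the job is $(a_i + a_k)\,\#\,\partial_j$ (connected sum with a curve through \emph{two} cross-caps, as in the paper), which is orientation-preserving, meets $r_j$ once, and meets $a_i$ and $a_k$ once each — so it is disjoint from all but \emph{two} of the $a_i$, not all but one. Twisting around it therefore disturbs two of the $a$-coordinates, which you must be prepared to renormalise.

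This is where the ordering matters. The paper first normalises $(q_\xi(a_1),\dots,q_\xi(a_n))$ to $(0,1,0,\dots,0)$ or $(0,1,1,0,\dots,0)$, which guarantees $q_\xi(a_1+a_2)=1$; then for a boundary with $\delta_j=1$ the curve $x=(a_1+a_2)\,\#\,\partial_j$ automatically satisfies $q_\xi(x)=q_\xi(a_1+a_2)+\delta_j=0$, so the twist is effective without further fuss (at the cost of swapping $A_1$ and $A_2$, which can be fixed again afterward). Your order — arcs first, then $a$'s — also works, but then you must separately arrange that some pair $(a_i,a_k)$ with $q_\xi(a_i)+q_\xi(a_k)=1$ exists before killing each $R_j$ with $\delta_j=1$; if all $q_\xi(a_i)$ agree, a preliminary twist around some $a_i+a_j$ must be inserted to create such a pair, and this needs $n\geq 3$. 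So the two orders are equivalent but the paper's is cleaner. Your suggested normal forms $(0,\dots,0)$ and $(1,0,\dots,0)$ are reachable and fine, but note that with the ``flip an equal pair'' move the reduction to $(0,\dots,0)$ only succeeds for $n\geq 3$ — for $n=2$ the configurations $(1,0)$ and $(0,1)$ are stuck — which is consistent with the hypothesis.

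Your surjectivity argument via the torsor formula $A(g\cdot\xi)=A(\xi)+g(\sum a_i)$ is correct and sufficient for $n\geq 1$.
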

\begin{proof}
We use the bijection $e^+$ to identify $\Pin^+(S_{n,b+1};\delta)$ with $(\bZ/2)^{n+b}$, and write elements as $(A_1, \ldots, A_n, R_1, \ldots, R_b)$. If we take the simple closed curve $a_i + a_j$ from Figure \ref{fig:NonorientableCycles} (when $n \geq 2$), we have
$$q_{\tau_{a_i + a_j}^*\xi}(a_s) = q_\xi(a_s + \langle a_i + a_j, a_s \rangle \cdot (a_i + a_j)) =
\begin{cases}
q_\xi(a_s) & s \neq i, j\\
1+q_\xi(a_j) & s = i \\
1+q_\xi(a_i) & s = j.
\end{cases}$$
Thus given an element $(A_1, \ldots, A_n) \in (\bZ/2)^n$, we may permute a pair of entries and add 1 to both, via Dehn twists. In particular, we may remove pairs of 1's, and we may move a single 1 to be at any position (if $n \geq 3$). Thus we may reduce any element to the form
$$(0, 1, 0, \ldots, 0, R_1, \ldots, R_b) \,\,\text{or}\,\, (0, 1, 1, 0 \ldots, 0, R_1, \ldots, R_b)$$
as long as $n \geq 3$.

Suppose that $R_i=1$. If $\delta_i=0$ then twisting along $\partial_i$ gives
$$q_{\tau_{\partial_i}^*\xi}(r_i) = q_\xi(r_i) + q_\xi(\partial_i)-1 = 0$$
and does not affect the other coordinates. If $\delta_i=1$ then the simple closed curve $a_1 + a_2$ may be connect-summed to the simple closed curve $\partial_i$ to give a curve $x$ which intersects $a_1$, $a_2$ and $r_i$ once, the other curves and arcs not at all, and has $q_\xi(x) = 0$. Then
$$q_{\tau_{x}^*\xi}(r_i) = q_\xi(r_i) + q_\xi(x)-1 = 0$$
so twisting around this sets $R_i$ to 0. Doing this for each $i$ allows us to reduce to an element of the form
$$(0, 1, 0, \ldots, 0) \,\,\text{or}\,\, (0, 1, 1, 0, \ldots, 0)$$
as long as $n \geq 3$, so there are at most two orbits. On the other hand, the invariant $A$ distinguishes these two elements, so there are precisely two orbits, distinguished by $A$.
\end{proof}

We also make the following observation about the behaviour of $A$ with respect to the $H^1(F, \partial F;\bZ/2)$-torsor structure on $\Pin^+(F;\delta)$, which is immediate from the formula for $A$,
\begin{equation}\label{eq:TorsorPinPlus}
A(g \cdot \xi) = A(\xi) + g\left(\sum_{i=1}^n a_i \right).
\end{equation}

\subsection{Diffeomorphism classes of $\Pin^-$-structures}\label{sec:DiffClassesPinMinusStruct}

Once we have the map $e^- : \Pin^-(S_{n, b+1};\delta) \to (\bZ/4)^{n+b}$ which is a bijection onto the subset $\{ [1],[3] \}^n \times \{[0], [2]\}^{b}$, we define a function
$$A : \Pin^-(S_{n,b+1};\delta) \lra \bZ/4$$
by the formula
$$\xi \mapsto \#\{i \,\,\vert\,\,q_\xi(a_i) = 1\} - \sum_{j=1}^b \delta_i \cdot \frac{q_\xi(r_i)}{2},$$
bearing in mind that $r_i$ is an orientation-preserving arc so $q_\xi(r_i)$ is $0$ or $2$, and $\tfrac{q_\xi(r_i)}{2}$ is then defined to be $0$ or $1$.

\begin{prop}
The function $A$ is $\Gamma(S_{n,b+1})$-invariant.
\end{prop}
\begin{proof}
It is enough to show that it is invariant under Dehn twists and a single crosscap slide, as these generate the mapping class group by \cite[Theorem 2]{BC}. 

We first treat Dehn twists. Let $x$ be an orientation-preserving simple closed curve represented in homology by $\sum X_i a_i + \sum \lambda_j \partial_j$, and $\xi$ a $\Pin^-$-structure. Then
$$q_{\tau_x^*\xi}(a_i) = q_\xi(a_i) + X_i (q_\xi(x) -2)$$
and
$$q_{\tau_x^*\xi}(r_j) = q_\xi(r_j) + (\lambda_j-\lambda_0)(q_\xi(x) -2)$$
so if $q_\xi(x)=2$ the proposed invariant is trivially preserved. If $q_\xi(x)=0$ then
$$\sum \delta_i \cdot \frac{q_{\tau_x^*\xi}(r_i)}{2} - \sum \delta_i \cdot \frac{q_\xi(r_i)}{2}  = -\sum \delta_j(\lambda_j-\lambda_0).$$ 
Using Lemma \ref{lem:PossibleBoundaries} we have $\sum \delta_i=0$, and so
$$-\sum_{j=1}^r \delta_j(\lambda_j-\lambda_0) = -\sum_{j=0}^r \delta_j \lambda_j = -q_\xi(x)+ \sum X_i q_{\tau_x^*\xi}(a_i) = \sum X_i q_{\tau_x^*\xi}(a_i),$$
which is $\sum X_i(q_\xi(a_i)-2X_i) = \sum X_i q_\xi(a_i) - 2\langle x,x \rangle = \sum X_i q_\xi(a_i)$, as $\langle x, x \rangle=0$ because $x$ is orientation-preserving.

On the other hand, $q_{\tau_x^*\xi}(a_i)=1$ if and only if $q_\xi(a_i) = 1 + 2X_i$, which falls into two disjoint cases:
\begin{enumerate}[(i)]
    \item either $q_\xi(a_i) = 1$ and $X_i$ = 0,
	\item or $q_\xi(a_i) = 3$ and $X_i=1$.
\end{enumerate}
Thus the difference $\#\{i \,\,\vert\,\,q_\xi(a_i)=1\} - \#\{i \,\,\vert\,\,q_\xi(a_i)=1+2X_i\}$ is the same as the difference $\#\{i \,\,\vert\,\,q_\xi(a_i)=1, X_i=1\} - \#\{i \,\,\vert\,\,q_\xi(a_i)=3, X_i=1\}$
which is $\sum X_i q_\xi(a_i)$, as required.

If $c$ is the crosscap slide described in Lemma \ref{lem:crosscapslide} then we calculate using Lemma \ref{lem:QuadProperty} that
\begin{align*}
q_{c^*\xi}(a_1) &= q_\xi(-a_1) = 2-q_\xi(a_1)\\
q_{c^*\xi}(a_2) &= q_\xi(a_2+2\cdot a_1) = q_\xi(a_2) + 2\cdot q_\xi(a_1)+2\\
q_{c^*\xi}(a_i) &= q_\xi(a_i) \text{ if $i > 2$}.
\end{align*}
Now $q_\xi(a_i) = [1]$ or $[3]$ in $\bZ/4$, so $2-q_\xi(a_1) = q_\xi(a_1)$ and $2\cdot q_\xi(a_1)+2 = 0$, so $c$ acts trivially on $\Pin^-(S_{n,b+1};\delta)$ and in particular preserves $A$.
\end{proof}

\begin{prop}\label{prop:PinMinusInvariant}
The induced map $A : \Pin^-(S_{n,b+1};\delta)/\Gamma(S_{n,b+1}) \to \bZ/4$ is a bijection if the source is non-empty and $n \geq 3$.
\end{prop}
\begin{proof}
Let us use the bijection $e^-$ to identify $\Pin^-(S_{n,b+1};\delta)$ with $\{[1], [3]\}^n \times \{[0], [2]\}^b$, and write elements as $(A_1, \ldots, A_n, R_1, \ldots, R_b)$. Note that
$$q_{\tau_{a_i + a_j}^*\xi}(a_s) = q_\xi(a_s + \langle a_i + a_j, a_s \rangle \cdot (a_i + a_j)) =
\begin{cases}
q_\xi(a_s) & s \neq i, j\\
q_\xi(a_j) & s = i \\
q_\xi(a_i) & s = j.
\end{cases}$$
Thus given an element $(A_1, \ldots, A_n) \in (1+2\bZ/4)^n$ there is a diffeomorphism which permutes its entries arbitrarily. Note also that
$$q_{\tau_{a_i + a_j + a_k + a_l}^*\xi}(a_s) =
\begin{cases}
q_\xi(a_s) & s \neq i, j, k, l\\
q_\xi(a_j+a_k+a_l) & s = i \\
q_\xi(a_i+a_k+a_l) & s = j \\
q_\xi(a_i+a_j+a_l) & s = k \\
q_\xi(a_i+a_j+a_k) & s = l \\
\end{cases}$$
This allows us to replace an occurrence of $(3,3,3,3)$ in $(A_1, \ldots, A_n)$ by $(1,1,1,1)$. 

Suppose that $R_i=2$. If $\delta_i=0$ then twisting along $\partial_i$ sets $R_i$ to 0. As $n \geq 3$ there is a pair of basis elements $a_j$, $a_k$ such that $\{A_j, A_k\} = \{1,1\}$ or $\{3,3\}$. If $\delta_i=2$ the simple closed curve $a_j+a_k$ may be connect-summed to the simple closed curve $\partial_i$ to give a simple closed curve $x$ which intersects $a_j$, $a_k$ and $r_i$ once, the other curves not at all, and has $q_\xi(x)= q_\xi(a_j) + q_\xi(a_k) + \delta_i = 0$. Then
$$q_{\tau_{x}^*\xi}(r_i) = q_\xi(r_i) + q_\xi(x) - 2 = 0$$
so twisting around this sets $R_i$ to 0. This shows how to reduce to $R_1=\cdots=R_r=0$. Thus for $n \geq 3$ there are at most 4 orbits. On the other hand, the invariant $A$ distinguishes the four elements
$$( 3, \ldots, 3, 0, \ldots, 0), (1,, 3, \ldots, 3, 0, \ldots, 0), \dots, (1, 1, 1, 3, \ldots, 3, 0, \ldots, 0)$$
and so there are precisely four orbits, distinguished by $A$.
\end{proof}

We also make the following observation about the behaviour of $A$ with respect to the $H^1(F, \partial F;\bZ/2)$-torsor structure on $\Pin^-(F;\delta)$, which is immediate from the formula for $A$,
\begin{equation}\label{eq:TorsorPinMinus}
A(g \cdot \xi) = A(\xi) + \sum_{i=1}^n g(a_i) \cdot(q_\xi(a_i)-2) - \sum_{j=1}^b g(r_j)\cdot \delta_j .
\end{equation}

\begin{rem}
The calculation of the sets $\Pin^\pm(S_n)/\Gamma(S_n)$ has also been attempted by D\k{a}browski and Percacci \cite{DP}. They correctly find that $\Pin^+(S_n)/\Gamma(S_n)$ is empty if $n$ is odd, and that $\Pin^-(S_n)/\Gamma(S_n)$ has four elements for $n \geq 3$ (they do not make explicit this requirement on $n$, but it is required for all values of their invariants to be attained). However they \emph{incorrectly} claim that $\Pin^+(S_n)/\Gamma(S_n)$ has three elements when $n$ is even: by our Proposition \ref{prop:PinPlusInvariant} it consists of two elements as long as $n \geq 3$. 

The problem in \cite{DP} seems to be in the definition of the invariant $\ell$. As the authors state, the orientation involution $J$ on the orientation cover $\Sigma_{n-1}$ lifts canonically to an involution $TJ$ of the frame bundle $F_\Sigma$. If this lifts to an involution $\widetilde{T}J$ of a $\Pin^\pm$-structure $\widetilde{F}_\Sigma \to F_\Sigma$ then composing $\widetilde{T}J$ with multiplication with $-1 \in \Pin^\pm(2)$ gives another involution of $\widetilde{F}_\Sigma$, and the authors propose to label these two involutions by $\ell \in \{0,1\}$. But there is no canonical way to choose which should be labelled 0 and which should be labelled 1, so the ``invariant'' $\ell$ is not defined.
\end{rem}

\subsection{Gluing $\Pin^\pm$ surfaces}

As in \S \ref{sec:GluingThetaRSurfaces}, we require a formula for the effect on the invariant $A$ of gluing together surfaces. For our purposes we only need such a formula for gluing together cobordisms of the form $S_{n, 1+1}$, and so we shall not investigate the existence of a formula for gluing more general cobordisms.

If $(F, \delta)$ and $(F', \delta')$ are cobordisms with $\Pin^\pm$ boundary conditions and there is an identification $\psi : \partial_{in} F' \cong \partial_{out} F$ of boundary components such that $\psi^*(\delta'\vert_{\partial_{in}F'}) = \delta\vert_{\partial_{out} F}$, then we obtain a gluing map
\begin{eqnarray*}
G_\psi: \Pin^\pm(F;\delta)/ \Gamma(F) \times \Pin^\pm(F';\delta')/ \Gamma(F') 
\lra \Pin^\pm(F \cup_\psi F';\delta \cup \delta)/\Gamma(F \cup_\psi F').
\end{eqnarray*}
We always suppose that each boundary components of each surface has a marked point with standard $\Pin^\pm$-structure, and that these are identified under $\psi$.

\begin{lem}\label{lem:PinAdditivity}
Consider gluing together two $\Pin^\pm$ cobordisms $S_{n, 1+1}$ and $S_{n', 1+1}$. 

In the $\Pin^+$ case, the function $A : \Pin^+(S_{n+n', 1+1}, \delta \cup \delta')/\Gamma(S_{n+n',1+1}) \to \bZ/2$ evaluated on the glued $\Pin^+$-structure $G_\psi(\xi, \xi')$ is $A(\xi) + A(\xi')$. 

In the $\Pin^-$ case, the function $A : \Pin^-(S_{n+n', 1+1}, \delta \cup \delta')/\Gamma(S_{n+n',1+1}) \to \bZ/4$ evaluated on the glued $\Pin^-$-structure $G_\psi(\xi, \xi')$ is $A(\xi) + A(\xi') + \delta\vert_{\partial_{in} S_{n', 1+1}}$.
\end{lem}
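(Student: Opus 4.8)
The plan is to reduce the computation to the $\Pin^\pm$ analogue of Lemma~\ref{lem:AddivityOfA}: we track what happens to the curves $a_i$ and the arcs $r_i$ generating the relevant $\bZ/2$- or $\bZ/4$-homology under the gluing. First I would fix convenient coordinate systems. The cobordism $S_{n,1+1}$ carries the generators $a_1,\dots,a_n$ together with an arc $r_1$ joining the marked point on the incoming boundary to the marked point on the outgoing boundary; similarly $S_{n',1+1}$ carries $a'_1,\dots,a'_{n'}$ and an arc $r'_1$. After gluing along $\psi$ (identifying $\partial_{out}S_{n,1+1}$ with $\partial_{in}S_{n',1+1}$ and matching the marked points via $\phi$), the result is $S_{n+n',1+1}$, and a natural set of homology generators is $a_1,\dots,a_n,a'_1,\dots,a'_{n'}$ together with the single arc $r''_1 := r_1 \# r'_1$ joining the surviving incoming marked point to the surviving outgoing marked point.

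\textbf{Key steps.} The first point is that each $a_i$ and each $a'_j$ sits in one of the two original pieces, so its $q$-value on the glued surface equals its $q$-value on the piece it came from; this uses only that $q_\xi$ is computed from monodromy of a locally defined reduction of structure group, exactly as in Lemma~\ref{lem:AddivityOfA}(ii) — when two arcs are joined into a closed loop the invariants add with no correction. Consequently the $\#\{q(a_i)=1\}$-type contributions, and the plain $\sum q(a_i)$ in the $\Pin^+$ case, are simply additive. The second, more delicate, point concerns the arc contribution. In the $\Pin^+$ case $A$ has no arc term, so we are already done: $A((\psi,\phi)(\xi,\xi')) = \sum_{i=1}^n q_\xi(a_i) + \sum_{j=1}^{n'} q_{\xi'}(a'_j) = A(\xi)+A(\xi')$. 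In the $\Pin^-$ case the arc term $-\delta_1 \cdot q(r''_1)/2$ must be analysed: here $r''_1 = r_1 \# r'_1$ is a concatenation of the two arcs \emph{at a single marked point}, and by the straightening convention built into the definition of $q_\xi$ on arcs (the two half-turns add, as recorded in Lemma~\ref{lem:AddivityOfA}(i)) one gets $Mon(r''_1) = Mon(r_1) + Mon(r'_1)$, hence $q_\xi(r''_1) = q_\xi(r_1)+q_{\xi'}(r'_1) + 2 \in \bZ/4$, i.e. $q(r''_1)/2 = q(r_1)/2 + q(r'_1)/2 + 1 \in \bZ/2$. The outgoing boundary $\delta_1$ of the glued surface equals the incoming boundary $\delta\vert_{\partial_{in}S_{n',1+1}}$ of $S_{n',1+1}$ (these are glued, so carry the same $q$-value), while the two original arc terms of $A(\xi)$ and $A(\xi')$ are indexed by the original boundary values. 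Assembling: $-\delta_1 q(r''_1)/2 = -\delta_1 (q(r_1)/2 + q(r'_1)/2 + 1)$, and matching this against the sum of the original two arc terms leaves exactly the extra summand $-\delta_1 = \delta\vert_{\partial_{in}S_{n',1+1}}$ in $\bZ/2$ (absorbed into the $\bZ/4$-valued $A$ via $2\bZ/4\cong\bZ/2$), which is the claimed discrepancy.

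\textbf{Main obstacle.} The routine bookkeeping — matching up which boundary value is which after the identification $(\psi,\phi)$, and checking signs in $\bZ/4$ versus $\bZ/2$ — is where errors are easiest to make, so I would set up the orderings of boundary components explicitly (outgoing first, as in the structured-surface convention of \S\ref{sec:GluingThetaRSurfaces}) before computing. The genuinely substantive step, however, is the arc-concatenation computation $q(r_1\#r'_1) = q(r_1) + q(r'_1) + 2$ in the $\Pin^-$ case: this is the precise incarnation of point (i) of Lemma~\ref{lem:AddivityOfA} in the $\Pin^-$ setting, and it is where the ``$+\,\delta\vert_{\partial_{in}}$'' correction term is born. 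I would justify it by working in a small trivialising disc around the common marked point, where the half-turn straightenings prescribed by the definition of $q_\xi$ on arcs are visible, so that the defect relative to naive additivity is exactly the monodromy $Mon(D^2)=2$ of the $\Pin^-$-structure around a disc. Everything else then follows by the additivity of monodromy and Lemma~\ref{lem:QuadProperty}.
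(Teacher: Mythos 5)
Your proposal is correct and takes essentially the same approach as the paper's very terse proof: the whole point is the observation that when arcs are concatenated at a single marked point the monodromy adds, so $q$ adds \emph{up to a $+Mon(D^2)$ correction}, which is $2$ for $\Pin^-$ and (harmlessly, since it cancels twice) $1$ for $\Pin^+$; everything else follows from the locality of $q_\xi$ and the definition of $A$. One small slip worth flagging: you write ``The outgoing boundary $\delta_1$ of the glued surface equals the incoming boundary $\delta\vert_{\partial_{in}S_{n',1+1}}$ of $S_{n',1+1}$ (these are glued, so carry the same $q$-value),'' but the boundary carrying the index $1$ on $S_{n+n',1+1}$ is the surviving incoming boundary, namely $\partial_{in}S_{n,1+1}$ — it is not glued to anything, and certainly not to $\partial_{in}S_{n',1+1}$; the genuine reason those two values coincide is Lemma~\ref{lem:PossibleBoundaries}, which for $\Pin^-$ forces $\delta_0 = \delta_1$ on each of $S_{n,1+1}$ and $S_{n',1+1}$, and combined with the matching of $\partial_{out}S_{n,1+1}$ to $\partial_{in}S_{n',1+1}$ this makes all four boundary values equal, so your final bookkeeping comes out right.
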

\begin{proof}
These formulas are immediate from the definitions of the invariant $A$, and in the $\Pin^-$ case the observation that when gluing two arcs together at a single end the monodromy adds, so the value of $q_\xi$ adds, \emph{but then two is added}.
\end{proof}

We also require the following lemma concerning more general gluings, though not any explicit formula for the effect on the invariant $A$.

\begin{lem}\label{lem:PinAdditivity2}
If we fix a (possibly orientable) $\Pin^\pm$-surface $F'$, then the map
$$G_\psi : \Pin^\pm(F;\delta)/ \Gamma(F) \lra \Pin^\pm(F \cup_\psi F';\delta \cup \delta')/\Gamma(F \cup_\psi F')$$
is a bijection as long as $F$ is non-orientable of genus $\geq 3$.
\end{lem}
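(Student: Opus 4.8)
The plan is to reduce the statement about orbit sets to a counting argument, using the classification of orbit sets already established. First I would observe that the gluing map $(\psi,\phi)$ is automatically well-defined and compatible with the mapping class group actions (this was spelled out just before the lemma), so it descends to a map of orbit sets as stated; the only content is that it is a bijection. Since $F$ is non-orientable of genus $\geq 3$, and $F\cup_\psi F'$ is then also non-orientable of genus $\geq 3$ (gluing $F'$ on along boundary components only increases genus, or leaves it fixed, and cannot make a non-orientable surface orientable), both source and target have their orbit sets completely classified by the invariant $A$: by Proposition \ref{prop:PinPlusInvariant} the $\Pin^+$ orbit set is either empty or has exactly two elements distinguished by $A\in\bZ/2$, and by the corresponding $\Pin^-$ proposition the $\Pin^-$ orbit set is either empty or has exactly four elements distinguished by $A\in\bZ/4$.

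Next I would use the gluing formula for $A$. Lemma \ref{lem:PinAdditivity}, or more precisely the additivity behaviour of $A$ under gluing on a fixed surface $F'$, shows that the effect of $(\psi,\phi)$ on the invariant $A$ is (up to a fixed constant depending only on $F'$ and the boundary condition) translation by $A(\xi')$ composed with the identity on the $\bZ/r$ coordinate — that is, $A((\psi,\phi)(\xi)) = A(\xi) + c$ for a constant $c$ independent of $\xi$. In particular the induced map on orbit sets, when transported along the bijections $A:\Pin^\pm(F;\delta)/\Gamma(F)\xrightarrow{\sim}\bZ/r$ and $A:\Pin^\pm(F\cup_\psi F';-)/\Gamma(-)\xrightarrow{\sim}\bZ/r$, is simply $x\mapsto x+c$, which is a bijection of $\bZ/r$. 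It remains only to check that the target orbit set is non-empty whenever the source is; but non-emptiness of the source produces, by gluing, an element of the target, so the target is non-empty, and then the classification applies to it too. (If the source is empty there is nothing to prove.)

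The one point requiring a little care — and the step I expect to be the main obstacle — is the bookkeeping needed to invoke the additivity of $A$ when $F'$ is a general fixed surface rather than one of the elementary cobordisms $S_{n,1+1}$ treated in Lemma \ref{lem:PinAdditivity}. The cleanest route is to factor the gluing $\psi$ into a sequence of elementary gluings (gluing on pairs of pants, discs, and projective-planes-with-two-discs, i.e.\ the generators $\alpha$, $\beta$, $\gamma$, $\mu$ of the stabilisation maps, together with the genus-raising gluings on $S_{n,1+1}$), apply Lemma \ref{lem:PinAdditivity} at each step to see that $A$ changes by adding a constant, and compose. One must keep track of the structured-surface conventions (marked points, ordering of boundary components) so that the invariants being added are the right ones; but since all that matters for the conclusion is that the net effect on $A$ is an affine bijection of $\bZ/r$, the precise value of the constant $c$ is irrelevant, which removes most of the delicacy. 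Thus the genus $\geq 3$ hypothesis is exactly what is needed to guarantee both orbit sets are classified by $A$, and the lemma follows.
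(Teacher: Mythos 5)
You correctly identify the structural skeleton: once $F$ is non-orientable of genus $\geq 3$ both orbit sets are classified by the invariant $A$ and have the same (finite) cardinality, so establishing surjectivity (or injectivity) suffices. But the core of your argument --- that $A((\psi,\phi)(\xi)) = A(\xi) + c$ for a constant $c$ independent of $\xi$ --- has a genuine gap, and it is exactly the one you flag as ``the main obstacle''. Lemma \ref{lem:PinAdditivity} only covers two cobordisms each of the form $S_{n,1+1}$ glued along a single boundary, and the paper explicitly declines to establish a gluing formula for $A$ in any greater generality. Your proposed repair (factor the gluing through elementary cobordisms and apply Lemma \ref{lem:PinAdditivity} stepwise) does not go through: the cobordisms behind the $\alpha$, $\beta$, $\gamma$ stabilisations are orientable pairs of pants and discs, not of the form $S_{n,1+1}$, so the lemma gives no information about them; and when $F'$ is glued along several boundary components, new handle curves appear whose $q_\xi$-values feed into $A$ of the glued surface in a way that is not obviously controlled by $A(\xi)$ alone. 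So the ``translation'' claim is neither supplied by the paper nor by the factoring route you sketch.

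The paper's own proof avoids gluing additivity of $A$ entirely. Having observed that both sides have the same cardinality, it proves surjectivity by noting that $(\psi,\phi): \Pin^\pm(F;\delta)\to \Pin^\pm(F\cup_\psi F';\cdots)$ is a map of torsors over $H^1(F,\partial F;\bZ/2)\to H^1(F\cup_\psi F',\partial(F\cup_\psi F');\bZ/2)$, and then invoking the explicit formulas (\ref{eq:TorsorPinPlus}) and (\ref{eq:TorsorPinMinus}) for how $A$ varies under the torsor action. It is worth noting that (\ref{eq:TorsorPinMinus}) shows the torsor action on a $\Pin^-$-structure does \emph{not} shift $A$ by a constant --- the increment $\sum g(a_i)(q_\xi(a_i)-2)$ depends on $\xi$ --- and the genus $\geq 3$ hypothesis enters precisely so that three values in $\{1,3\}\subset\bZ/4$ are available, from which every element of $\bZ/4$ can be written as a $\{0,1\}$-linear combination. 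That the working mechanism is this lattice-type observation rather than a translation is a signal that the affine picture you propose is not the natural one for the $\Pin^-$ case.
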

\begin{proof}
Note both sides have the same cardinality (2 in the $\Pin^+$ case and 4 in the $\Pin^-$ case), and so it is enough to show that the map is surjective. For this we consider the commutative diagram
\begin{equation*}
\xymatrix{
{\Pin^\pm(F;\delta)} \ar[d]_-{G_\psi}\ar@{->>}[r] & {\Pin^\pm(F;\delta)/ \Gamma(F)} \ar[d]_-{G_\psi}\ar[r]^-A_-\sim& {\bZ/2 \,\, \text{or}\,\, \bZ/4} \ar[d]^{\text{induced}}\\
{\Pin^\pm(F \cup_\psi F';\delta \cup \delta')} \ar@{->>}[r]  & {\Pin^\pm(F \cup_\psi F';\delta \cup \delta')/\Gamma(F \cup_\psi F')} \ar[r]^-A_-\sim & {\bZ/2 \,\, \text{or}\,\, \bZ/4}
}
\end{equation*}
where the left vertical map is a map of torsors over the map
$$H^1(F, \partial F;\bZ/2) \lra H^1(F \cup_\psi F', \partial (F \cup_\psi F');\bZ/2),$$
and we claim that composition along the bottom of the diagram is surjective: it follows that the right-hand vertical map is surjective, which proves the lemma.

In the $\Pin^+$ case the formula (\ref{eq:TorsorPinPlus}) shows that the composition along the bottom of the diagram is surjective, as we may always find a $g \in H^1(F, \partial F;\bZ/2)$ that evaluates to $1$ on $\sum a_i$.

In the $\Pin^-$ case, observe that given a tuple of elements in $\{[1], [3]\}^3$ we may write any element of $\bZ/4$ as a sum of them using only coefficients 0 and 1. This observation along with the formula (\ref{eq:TorsorPinMinus}) shows that composition along the bottom of the diagram is surjective as long as $F$ has genus $\geq 3$.
\end{proof}

\subsection{Homological stability}\label{sec:HomStabPin}
We will now use the above results to apply the main theorem of \cite{R-WResolution} in the case of non-orientable surfaces. The proofs necessarily use terminology introduced in \cite{R-WResolution}, and we will not give these definitions again.

\begin{thm}
The moduli spaces of $\Pin^+$-surfaces exhibit homological stability. More precisely,
\begin{enumerate}[(i)]
	\item Any $\alpha(n): \mathcal{M}^{\Pin^+}(S_{n,b}) \to \mathcal{M}^{\Pin^+}(S_{n+2, b-1})$ is a homology isomorphism in degrees $4* \leq n-6$.
	\item Any $\beta(n): \mathcal{M}^{\Pin^+}(S_{n, b}) \to \mathcal{M}^{\Pin^+}(S_{n, b+1})$ is a homology isomorphism in degrees $4* \leq n - 6$. If one of the created boundary conditions is trivial, it is a split homology monomorphism in all degrees.
	\item Any $\gamma(n): \mathcal{M}^{\Pin^+}(S_{n, b}) \to \mathcal{M}^{\Pin^+}(S_{n, b-1})$ is a homology isomorphism in degrees $4* \leq n - 6$. If $b \geq 2$ it is a split homology epimorphism in all degrees, and if $b=1$ it is a homology epimorphism in degrees $4* \leq n - 2$.
	\item Any $\mu(n): \mathcal{M}^{\Pin^+}(S_{n, b}) \to \mathcal{M}^{\Pin^+}(S_{n+1, b})$ is a homology epimorphism in degrees $4* \leq n - 2$, and a homology isomorphism in degrees $4* \leq n - 6$.
\end{enumerate}
\end{thm}
\begin{proof}
In order to apply Theorems 8.2 and 12.4 of \cite{R-WResolution}, we must verify a collection of conditions: that $\Pin^+$-structures \emph{stabilise on $\pi_0$ at genus $h'$ for projective planes}, that they are \emph{$k'$-trivial for projective planes}, and that they \emph{stabilise on $\pi_0$ at genus $h$}. We claim that this is indeed so with $(h', k', h) = (3,3,3)$, so solving the recurrence relations of \cite[\S 7.5]{R-WResolution} with this data gives $H'(n) = \lfloor \tfrac{n-2}{4}\rfloor$, and so the stated stability ranges.

The path-components of the moduli space of $\Pin^+$-structures stabilise at genus $3$ both for projective planes and in general, by Proposition \ref{prop:PinPlusInvariant} and Lemma \ref{lem:PinAdditivity2}. Precisely, Lemma \ref{lem:PinAdditivity2} shows that all stabilisation maps are bijections for genus at least 3, and the second part of Proposition \ref{prop:PinPlusInvariant} shows that the invariant $A$ is surjective for genus at least 1, while the first part of Proposition \ref{prop:PinPlusInvariant} shows that the invariant $A$ is complete for genus at least 3: it follows that stabilisation maps starting in genus 2 are surjective on path-components. Then \cite[Proposition 7.7]{R-WResolution} implies that $\Pin^+$-structures are 3-trivial for projective planes.
\end{proof}

\begin{thm}
The moduli spaces of $\Pin^-$-surfaces exhibit homological stability. More precisely,
\begin{enumerate}[(i)]
	\item Any $\alpha(n): \mathcal{M}^{\Pin^-}(S_{n,b}) \to \mathcal{M}^{\Pin^-}(S_{n+2, b-1})$ is a homology isomorphism in degrees $5* \leq n-8$.
	\item Any $\beta(n): \mathcal{M}^{\Pin^-}(S_{n,b}) \to \mathcal{M}^{\Pin^-}(S_{n, b+1})$ is a homology isomorphism in degrees $5* \leq n - 8$. If one of the created boundary conditions is trivial, it is a split homology monomorphism in all degrees.
	\item Any $\gamma(n): \mathcal{M}^{\Pin^-}(S_{n, b}) \to \mathcal{M}^{\Pin^-}(S_{n, b-1})$ is a homology isomorphism in degrees $5* \leq n - 8$. If $b \geq 2$ it is a split homology epimorphism in all degrees, and if $b=1$ it is a homology epimorphism in degrees $5* \leq n-3$.
	\item Any $\mu(n): \mathcal{M}^{\Pin^-}(S_{n, b}) \to \mathcal{M}^{\Pin^-}(S_{n+1, b})$ is a homology epimorphism in degrees $5* \leq n - 3$, and a homology isomorphism in degrees $5* \leq n - 8$.
\end{enumerate}
\end{thm}
\begin{proof}
As in the last theorem, but we claim we have $(h', k', h)=(4,4,4)$. The recurrence relations of \cite[\S 7.5]{R-WResolution} with this data gives $H'(n) = \lfloor \tfrac{n-3}{5}\rfloor$, and so the stated stability ranges. Again we deduce this from Lemma \ref{lem:PinAdditivity2}, but cannot prove stability with $h'=3$ in this case because the map
$$\mu(2) : \pi_0(\mathcal{M}^{\Pin^-}(S_{2, 1})) \lra \pi_0(\mathcal{M}^{\Pin^-}(S_{3, 1}))$$
is not surjective: by Proposition \ref{prop:PinMinusInvariant} the target has cardinality 4 and is detected by the $A$-invariant, whereas the set $\Pin^-(S_{2,1};\delta)$ is in bijection with $\{[1],[3]\}^2$ and its $A$-invariant---which counts the number of $[1]$'s in a vector---can only take values $0$, $1$, or $2$, so is not surjective.
\end{proof}

\section{Applications of homology stability for surfaces with $\Pin^\pm$-structure}

Let us denote by $\MT{Pin^\pm}{2}$ the Thom spectrum of the virtual bundle $-\gamma_2^{\Pin^\pm} \to B\Pin^\pm(2)$. There is a natural comparison map
$$\alpha : \coprod_{n \geq 1}\mathcal{M}^{\Pin^\pm}(S_{n, 1};\delta) \lra \Omega^\infty \MT{Pin^\pm}{2}$$
where $\delta$ is a boundary condition which bounds a disc. The left hand side admits the structure of a topological monoid, under the ``pair of pants'' product, and Galatius and the author \cite{GR-W} have shown that this map is a group-completion. In particular, applying the group-completion theorem \cite{McDuff-Segal} we obtain a homology equivalence
\begin{equation}\label{eq:GroupCompletionPinMonoid}
\bZ \times \mathcal{M}^{\Pin^\pm}(S_{\infty}) \lra \Omega^\infty \MT{Pin^\pm}{2}.
\end{equation}
This gives an isomorphism from the group $\pi_0(\Omega^\infty \MT{Pin^+}{2})$ to $\bZ \times \bZ/2$ and from the group $\pi_0(\Omega^\infty \MT{Pin^-}{2})$ to $\bZ \times \bZ/4$. Let us denote by $\Omega^\infty_\bullet \MT{Pin^\pm}{2}$ those path components corresponding to $0$ on the $\bZ$ factor. Combining this with the homology stability theorem proved in this paper, we establish the following corollary.

\begin{cor}
The space $\mathcal{M}^{\Pin^\pm}(S_{n, b};\delta)$ has the integral homology of the infinite loop space $\Omega^\infty_\bullet \MT{Pin^\pm}{2}$ in degrees $4* \leq n-6$ in the case $\Pin^+$ and in degrees $5* \leq n-8$ in the case $\Pin^-$.
\end{cor}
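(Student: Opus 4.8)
The plan is to combine the group-completion equivalence (\ref{eq:GroupCompletionPinMonoid}) with the homological stability theorems of \S\ref{sec:HomStabPin}; the substantive work has essentially all been done, so the remaining argument is short.

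First I would pass to a single cluster of path components. Since $\Omega^\infty\MT{Pin^\pm}{2}$ is an infinite loop space, all of its path components are homotopy equivalent, and since (\ref{eq:GroupCompletionPinMonoid}) is a homology equivalence it is in particular a bijection on $\pi_0$. Under the identification of $\pi_0(\Omega^\infty\MT{Pin^\pm}{2})$ with $\bZ\times\bZ/2$ (resp.\ $\bZ\times\bZ/4$) recorded above, this forces $\pi_0(\mathcal{M}^{\Pin^\pm}(S_\infty))\cong\bZ/2$ (resp.\ $\bZ/4$), in agreement with Proposition \ref{prop:PinPlusInvariant} (and its $\Pin^-$ analogue) together with Lemma \ref{lem:PinAdditivity2}, which between them show that the $\mu$-stabilisation maps used to build $\mathcal{M}^{\Pin^\pm}(S_\infty)$ are bijective on $\pi_0$ once the genus is at least $3$. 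Restricting (\ref{eq:GroupCompletionPinMonoid}) to the path components lying over $0\in\bZ$ on each side then yields a homology equivalence
$$\mathcal{M}^{\Pin^\pm}(S_\infty)\lra\Omega^\infty_\bullet\MT{Pin^\pm}{2}.$$

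Next I would unwind the left-hand side using stability. By construction $\mathcal{M}^{\Pin^\pm}(S_\infty)$ is the sequential colimit of the spaces $\mathcal{M}^{\Pin^\pm}(S_{n,1};\delta_n)$ along $\mu$-stabilisation maps, and singular homology commutes with such colimits; since each $\mu(n)$ is a homology isomorphism in degrees $5*\le n-6$ by the homological stability theorems (indeed in the wider range $4*\le n-5$ when the sign is $+$), the natural map $H_*(\mathcal{M}^{\Pin^\pm}(S_{n,1};\delta_n);\bZ)\to H_*(\mathcal{M}^{\Pin^\pm}(S_\infty);\bZ)$ is an isomorphism in degrees $5*\le n-6$. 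For a general surface $S_{n,b}$ and boundary condition $\delta$ I would link $\mathcal{M}^{\Pin^\pm}(S_{n,b};\delta)$ to a term of this colimit by applying $\alpha$-stabilisation maps repeatedly to merge all but one boundary component, landing in $\mathcal{M}^{\Pin^\pm}(S_{n+2(b-1),1};\delta')$, where $\delta'$ is automatically trivial in the $\Pin^-$ case and either trivial or made trivial by one further $\mu$-stabilisation in the $\Pin^+$ case; each of these maps is a homology isomorphism in degrees $5*\le n-6$ by the stability theorems. Composing with the previous isomorphism gives the statement.

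The point to be careful about is the last paragraph: one must arrange the zig-zag connecting $(S_{n,b},\delta)$ to the $b=1$, disc-bounding colimit system so that only stabilisation maps whose isomorphism range contains $\{5*\le n-6\}$ are used — in particular the generic $\beta$-map, whose isomorphism range is only $5*\le n-7$, must be routed around, which is why I would amalgamate boundary components with $\alpha$ rather than cap them off — and one must check that the $\pi_0$-bijection of Lemma \ref{lem:PinAdditivity2} identifies $\pi_0(\mathcal{M}^{\Pin^\pm}(S_\infty))$ with the correct components of $\Omega^\infty_\bullet\MT{Pin^\pm}{2}$. All of the genuine input — that the stable homology is that of $\Omega^\infty_\bullet\MT{Pin^\pm}{2}$ — is already packaged in (\ref{eq:GroupCompletionPinMonoid}).
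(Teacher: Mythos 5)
Your proof is correct and is exactly the argument the paper intends: restrict the group-completion homology equivalence \eqref{eq:GroupCompletionPinMonoid} to the zero component, then chain the $\alpha$- and $\mu$-stabilisation ranges from the theorems of \S\ref{sec:HomStabPin} to reach a term of the telescope defining $\mathcal{M}^{\Pin^\pm}(S_\infty)$ without losing the range $5*\le n-6$. (One small remark: the reason to prefer $\alpha$ over $\gamma$ when reducing to one boundary is not the $\beta$-range but that $\gamma$ requires the capped boundary condition to be trivial; and the closed case $b=0$, if one wishes to include it, needs the closing-the-last-boundary theorem, whose range $5*\le n-2$ is comfortably wide enough.)
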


Taking fundamental groups, we obtain the following computational corollary.

\begin{cor}
The abelianisation of $\Gamma^{Pin^+}(S_{n, b};\xi)$ is $\bZ/2$ for $n \geq 10$. The abelianisation of $\Gamma^{Pin^-}(S_{n, b};\xi)$ is $(\bZ/2)^3$ for $n \geq 13$.
\end{cor}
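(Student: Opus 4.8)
The plan is to reduce the statement to the computation of $\pi_1$ of a Thom spectrum. As established earlier in the paper, the path component of $\mathcal{M}^{\Pin^\pm}(S_{n,b};\delta)$ containing $\xi$ is a $K(\Gamma^{\Pin^\pm}(S_{n,b};\xi),1)$, so the abelianisation of $\Gamma^{\Pin^\pm}(S_{n,b};\xi)$ is $H_1$ of that component. By the preceding corollary this agrees, in the stable range, with $H_1$ of a path component of $\Omega^\infty_\bullet \MT{Pin^\pm}{2}$. For $\Pin^-$ the bound $5* \leq n-6$ gives the claim already for $n \geq 11$. For $\Pin^+$ I would instead invoke the sharper stability range for the crosscap map $\mu(n)$, which is a homology isomorphism in degrees $4* \leq n-5$: since the monoid appearing in (\ref{eq:GroupCompletionPinMonoid}) is built using the product with the M\"obius band---that is, using $\mu$---iterating $\mu(n)$ for $n \geq 9$ identifies $H_1(\mathcal{M}^{\Pin^+}(S_{n,b};\delta))$ with $H_1$ of a component of $\Omega^\infty_\bullet \MT{Pin^+}{2}$. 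Finally, a path component of an infinite loop space is connected, so the Hurewicz theorem identifies this group with the (automatically abelian) group $\pi_1(\MT{Pin^\pm}{2})$.

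It then remains to compute $\pi_1$ of the Thom spectrum $\MT{Pin^\pm}{2}$ of $-\gamma_2^{\Pin^\pm} \to B\Pin^\pm(2)$. I would run the Atiyah--Hirzebruch spectral sequence
$$E^2_{p,q} = H_{p+2}\bigl(B\Pin^\pm(2); \pi_q^s\bigr) \Longrightarrow \pi_{p+q}\bigl(\MT{Pin^\pm}{2}\bigr),$$
where the Thom isomorphism feeds in the homology of $B\Pin^\pm(2)$ twisted by the orientation character $w_1(\gamma_2^{\Pin^\pm})$ of the bundle (this character is non-zero, as $w_1$ pulls back non-trivially along $\theta^\pm$). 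The groups contributing to total degree $1$ are $H_3(B\Pin^\pm(2);\widetilde{\bZ})$, $H_2(B\Pin^\pm(2);\bZ/2)$, $H_1(B\Pin^\pm(2);\bZ/2)$ and $H_0(B\Pin^\pm(2);\pi_3^s)$ (again with the coefficients twisted by $w_1$). Equivalently one can use the Galatius--Madsen--Tillmann--Weiss cofibre sequence
$$\Sigma^{-1}MT\Pin^{\pm}(1) \lra \MT{Pin^\pm}{2} \lra \Sigma^\infty_+ B\Pin^\pm(2)$$
arising from the unit sphere bundle of $\gamma_2^{\Pin^\pm}$---whose total space over $B\Pin^\pm(2)$ is $B\Pin^{\pm}(1)$, with $\Pin^+(1) = \bZ/2 \times \bZ/2$ and $\Pin^-(1) = \bZ/4$---and chase its long exact sequence of homotopy groups in degrees $\leq 1$.

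The inputs to assemble are then the low-degree (co)homology groups of $B\Pin^\pm(2)$, both untwisted and $w_1$-twisted, which I would extract from the Serre (and Gysin) spectral sequence of the fibration $B\bZ/2 \to B\Pin^\pm(2) \to BO(2)$, carrying along the fact that $\Pin^+(2)$ and $\Pin^-(2)$ are the central extensions of $O(2)$ classified by $w_2$ and $w_2+w_1^2$ respectively: this is precisely where the two cases diverge and the answers acquire their different sizes. One then resolves the remaining Atiyah--Hirzebruch differentials in this range (those detecting Steenrod squares, and the $d_2$ and $d_3$ involving $\pi_1^s$, $\pi_2^s$ and the $\bZ/24$) and the surviving extension problems, to reach $\pi_1(\MT{Pin^+}{2}) = \bZ/2$ and $\pi_1(\MT{Pin^-}{2}) = (\bZ/2)^3$. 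As a consistency check, the same spectral sequence computes $\pi_0(\MT{Pin^\pm}{2})$, which by (\ref{eq:GroupCompletionPinMonoid}) must be $\bZ \times \bZ/2$ for $\Pin^+$ and $\bZ \times \bZ/4$ for $\Pin^-$; matching these values pins down several otherwise ambiguous differentials.

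The main obstacle is this computation of $\pi_1(\MT{Pin^\pm}{2})$: neither the twisted homology of $B\Pin^\pm(2)$ in degrees $\leq 3$ nor the relevant spectral-sequence differentials are entirely formal, and the whole discrepancy between the two answers $\bZ/2$ and $(\bZ/2)^3$ is concentrated there. The surrounding steps---asphericity of the components, homological stability, and Hurewicz---are routine, the only mild subtlety being the verification that the improved $\Pin^+$ stability range genuinely reaches $n=9$.
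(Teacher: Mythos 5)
Your reduction to $\pi_1(\MT{Pin^\pm}{2})$ follows the same outline as the paper's: components of the moduli space are aspherical, homological stability together with the group-completion statement (\ref{eq:GroupCompletionPinMonoid}) identifies $H_1$ in the stable range with $H_1$ of a component of $\Omega^\infty_\bullet \MT{Pin^\pm}{2}$, and Hurewicz converts that to $\pi_1$ of the spectrum. Your observation that the $\mu$-stability range $4* \leq n-5$ delivers the bound $n \geq 9$ for $\Pin^+$ is precisely the improvement the paper alludes to with ``one can do slightly better in the $\Pin^+$ case''; the one small thing to check is that for $b \neq 1$ you first apply $\gamma$ maps (isomorphisms on $H_1$ once $n \geq 7$) before iterating $\mu$ into the colimit, so the bound $n \geq 9$ remains the binding constraint.

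Where you diverge from the paper is in the actual computation of $\pi_1(\MT{Pin^\pm}{2})$, and there your proposal is a plausible strategy rather than a finished proof. The paper runs the $2$-primary Adams spectral sequence: it records $H^*(B\Pin^\pm(2);\bF_2)$ as an $\mathcal{A}_2$-module, applies the Thom isomorphism, and reads off the low-degree $E_2$-page; for $\Pin^-$ a potential differential is ruled out by chasing the long exact sequence of the cofibration $\MT{Spin}{2} \to \MT{Pin^-}{2} \to \gC$ coming from the double cover $B\Spin(2) \to B\Pin^-(2)$, which is a different cofibre sequence from the GMTW sphere-bundle one you cite (though that one, with $\Pin^+(1) = \bZ/2 \times \bZ/2$ and $\Pin^-(1) = \bZ/4$, is also usable). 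Your Atiyah--Hirzebruch set-up --- twisted coefficients via the Thom isomorphism, the list of groups contributing in total degree $1$ --- is correct, but the differentials and extensions you defer are exactly where $\bZ/2$ versus $(\bZ/2)^3$ gets decided, and your consistency check on $\pi_0$ does not by itself resolve them. You have diagnosed the gap accurately; to close it you would need to compute the $w_1$-twisted homology of $B\Pin^\pm(2)$ in degrees $\leq 3$ and pin down the $d_2$ (detecting $Sq^2$) and the extension in filtration $1$, or else switch to the Adams spectral sequence as the paper does.
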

\begin{proof}
By the homological stability theorems for these groups, their abelianisation in this range coincides with the first homology of the spaces $\Omega^\infty_0 \MT{Pin^+}{2}$ and $\Omega^\infty_0 \MT{Pin^-}{2}$ respectively. By Hurewicz' theorem, this coincides with the first homotopy group of these spaces. These are computed in Appendix \ref{sec:HomotopyOfPinPM}.
\end{proof}

Let us briefly explain the relationship between our calculations
$$\pi_0(\MT{Pin^+}{2}) = \bZ \oplus \bZ/2 \quad\quad\quad \pi_0(\MT{Pin^-}{2}) = \bZ \oplus \bZ/4$$
and the calculation of Kirby and Taylor \cite{KT} of the bordism groups
$$\Omega_2^{\Pin^+} = \bZ/2 \quad\quad\quad \Omega_2^{\Pin^-} = \bZ/8.$$
There are natural maps of spectra $s^\pm: \MT{Pin^\pm}{2} \to \Sigma^{-2} \mathbf{MPin^\pm}$ given by taking Thom spectra of $B\Pin^\pm(2) \to B\Pin^\pm$, and it is easy to check that the fibre $\mathbf{F}$ of $s^\pm$ is connective and has $\pi_0(\mathbf{F})\cong\bZ$, and a generator of this group is given by $S^2$ with its unique $\Pin^\pm$-structure. Furthermore, the Euler characteristic is well-defined on $\pi_0(\MT{Pin^\pm}{2})$, giving a homomorphism $\chi : \pi_0(\MT{Pin^\pm}{2}) \to \bZ$. We obtain a diagram
\begin{equation*}
\xymatrix{
0 \ar[r]& {\bZ} \ar[r]^-{1 \mapsto S^2} \ar[rd]_-{1 \mapsto 2}& {\pi_0(\MT{Pin^\pm}{2})} \ar[r]^-{\pi_0(s^\pm)}\ar[d]^-\chi& {\Omega_2^{\Pin^\pm}} \ar[r]& 0\\
& & {\bZ}.
}
\end{equation*}
In the $\Pin^+$ case only surfaces of even Euler characteristic admit $\Pin^+$-structures, so $\chi$ is onto $2\bZ$ and hence $\chi/2$ gives a splitting of the top short exact sequence, giving $\pi_0(\MT{Pin^+}{2}) \cong \bZ \oplus\bZ/2$, as we have calculated.

In the $\Pin^-$ case all surfaces admit $\Pin^-$-structures, so $\chi$ is surjective and cannot be used to split the short exact sequence. Instead we see that the square
\begin{equation*}
\xymatrix{
{\pi_0(\MT{Pin^-}{2})} \ar[r]^-{\pi_0(s^-)}\ar[d]^-\chi& {\Omega_2^{\Pin^-}} \ar[d]^{M \mapsto \langle [M], w_2(M)\rangle} & \!\!\!\!\!\!\!\!\!\!\!\!\!\!\!\!\!\! \cong \bZ/8\\
{\bZ} \ar[r]^{1 \mapsto [1]}& {\bZ/2}.
}
\end{equation*}
is cartesian, so $\pi_0(\MT{Pin^-}{2}) \cong \bZ \oplus \bZ/4$ as we have calculated.

\subsection{Stable homology of the $\Pin^\pm$ mapping class groups}\label{sec:StableHomologyPin}

Let us briefly discuss the homology of the moduli spaces $\mathcal{M}^{\Pin^\pm}(S_\infty)$, which coincides with the homology of the stable mapping class group $\Gamma^{\Pin^\pm}(S_\infty)$. This will require certain calculations in homotopy theory which we have included as Appendix \ref{sec:HomotopyOfPinPM}. By Proposition \ref{prop:OddCohomologyPin} and \cite[\S 5.1]{R-W} there are equivalences of $\bZ[\tfrac{1}{2}]$-local spectra
\begin{equation}\label{eq:ZHalfHomology}
\MT{Pin^\pm}{2}\left [\tfrac{1}{2} \right ] \simeq \MT{O}{2}\left [\tfrac{1}{2} \right ] \simeq \Sigma^\infty BO(2)_+ \left [\tfrac{1}{2} \right ] \simeq \Sigma^\infty \bH\bP^\infty_+\left [\tfrac{1}{2} \right ]
\end{equation}
and so isomorphisms on $\bZ[\tfrac{1}{2}]$-homology
$$H_*(\mathcal{M}^{\Pin^\pm}(S_\infty);\bZ[\tfrac{1}{2}]) \cong H_*(\mathcal{M}(S_\infty);\bZ[\tfrac{1}{2}]) \cong H_*(Q(\bH\bP^\infty_+);\bZ[\tfrac{1}{2}]).$$
In particular, the rational cohomology ring is
$$H^*(\mathcal{M}^{\Pin^\pm}(S_\infty);\bQ) \cong H^*(\mathcal{M}(S_\infty);\bQ) \cong \bQ[\zeta_1, \zeta_2, \ldots]$$
where the classes $\zeta_i$ in degree $4i$ are the characteristic classes introduced by Wahl \cite{Wahl} for unoriented surface bundles. For a surface bundle $S \to E \overset{\pi} \to B$ they may be defined as the Becker--Gottlieb transfer of the $i$th power of the first Pontrjagin class of the vertical tangent bundle, that is, $\zeta_i(E) := \trf^*_\pi(p_1(T^vE)^i) \in H^{4i}(B;\bZ)$.

Ebert and the author \cite{ERW} have studied the divisibility of $\zeta_i \in H^{4i}(\mathcal{M}(S_\infty);\bZ)$, and found them to be indivisible. By the first equivalence of (\ref{eq:ZHalfHomology}) it is then clear that the classes $\zeta_i \in H^{4i}(\mathcal{M}^{\Pin^\pm}(S_\infty);\bZ)$ are divisible at most by a power of 2.

\begin{prop}
The class $\zeta_i \in H^{4i}(\mathcal{M}^{\Pin^-}(S_\infty);\bZ)$ is divisible by precisely $4^{i}$. The class $\zeta_i \in H^{4i}(\mathcal{M}^{\Pin^+}(S_\infty);\bZ)$ is divisible by $4^{i}$, and by at most $2 \cdot 4^i$.
\end{prop}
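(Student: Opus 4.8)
The plan is to compute the divisibility of $\zeta_i$ by pulling it back along the comparison map $\mathcal{M}^{\Pin^\pm}(S_\infty) \to \Omega^\infty_\bullet \MT{Pin^\pm}{2}$ of \eqref{eq:GroupCompletionPinMonoid}, and then working in the stable homotopy category. Since $\zeta_i$ is by definition the transfer $\trf^*_\pi(p_1(T^vE)^i)$, and the comparison map is induced by a Pontrjagin--Thom construction, the class $\zeta_i \in H^{4i}(\Omega^\infty_\bullet \MT{Pin^\pm}{2};\bZ)$ is the image under the stable-to-unstable map $H^{4i}(\MT{Pin^\pm}{2};\bZ) \to H^{4i}(\Omega^\infty\MT{Pin^\pm}{2};\bZ)$ of a canonical class; because group completion is a homology equivalence and the class is primitive, its divisibility on the infinite loop space equals its divisibility on the spectrum $\MT{Pin^\pm}{2}$ itself. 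So the problem reduces to: compute the divisibility of the class $\lambda_i \in H^{4i}(\MT{Pin^\pm}{2};\bZ)$ which is dual, under the Thom isomorphism for $-\gamma_2^{\Pin^\pm}$, to $p_1(\gamma_2^{\Pin^\pm})^i \cdot (\text{Thom class})$. Concretely, via the Thom iso $H^{4i}(\MT{Pin^\pm}{2};\bZ) \cong \widetilde{H}^{4i+2}(\mathrm{Th}(\gamma_2^{\Pin^\pm}+\bR^N)/\ldots)$ this is $u \cdot p_1(\gamma_2^{\Pin^\pm})^i$ where $u$ is the Thom class, and the divisibility question becomes: by what power of $2$ is the class $u \cdot p_1(\gamma_2^{\Pin^\pm})^i$ divisible in the cohomology of the Thom spectrum?

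The key computation is therefore over $B\Pin^\pm(2)$. First I would use the equivalences \eqref{eq:ZHalfHomology}: after inverting $2$ everything agrees with $\MT{O}{2}[\tfrac12] \simeq \Sigma^\infty BO(2)_+[\tfrac12] \simeq \Sigma^\infty \bH\bP^\infty_+[\tfrac12]$, and by \cite{ERW} $\zeta_i$ is indivisible on $\mathcal{M}(S_\infty)$, which already forces the $2'$-part of the divisibility to be trivial and reduces us to computing a $2$-adic valuation. So I would localise at $2$ and analyze the integral cohomology of $B\Pin^\pm(2)$ together with its tautological bundle. The relevant input is: $\Pin^-(2)$ sits in $B\bZ/2 \to B\Pin^-(2) \to BO(2)$, and one knows (from the appendix calculations referenced, e.g.\ Proposition \ref{prop:OddCohomologyPin} and the discussion around \eqref{eq:PinPMDefinition}) how $p_1(\gamma_2^{\Pin^\pm})$ relates to the generator of $H^4$. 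The crucial point is that $p_1$ of the tautological $\Pin^-$ bundle is divisible by $4$ in $H^4(B\Pin^-(2);\bZ)$ — essentially because the double cover makes the relevant Euler/Pontrjagin class more divisible — and that the Thom class $u$ contributes no extra divisibility in the $\Pin^-$ case, whereas in the $\Pin^+$ case there is a potential extra factor of $2$ coming from the $2$-torsion in $H^*(B\Pin^+(2))$ (the class $w_2$ rather than $w_2+w_1^2$ governing the extension). I would then take $i$-th powers: $p_1^i$ is divisible by exactly $4^i$, giving $4^i \mid \zeta_i$, and combine with the indivisibility-after-inverting-$2$ to get that $4^i$ is exactly the divisibility in the $\Pin^-$ case, and that $4^i \mid \zeta_i \mid 2\cdot 4^i$ in the $\Pin^+$ case.

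To nail down the exact power I would argue on two sides. For the \emph{lower bound} ($4^i$ divides), factor the map $B\Pin^\pm(2) \to BO(2)$ and show $p_1(\gamma_2^{\Pin^\pm})$ is $4$ times an integral class, hence $u\cdot p_1^i$ is $4^i$ times an integral class in the Thom spectrum cohomology; pulling back along $\mathcal{M}^{\Pin^\pm}(S_\infty)\to\Omega^\infty_\bullet\MT{Pin^\pm}{2}$ gives $4^i \mid \zeta_i$. For the \emph{upper bound}, I would exhibit an explicit $\Pin^\pm$-surface bundle (or rather an explicit map into $\Omega^\infty_\bullet\MT{Pin^\pm}{2}$, e.g.\ coming from the generator of $\pi_4$ detected rationally, together with the transfer) on which $\zeta_i/4^i$ (resp.\ $\zeta_i/(2\cdot 4^i)$ in the $\Pin^+$ case) evaluates to an odd integer; equivalently, show that in $2$-local cohomology of the spectrum the class $u\cdot p_1^i/4^i$ is a nonzero (for $\Pin^-$) element of a suitable $\bZ/2$-cohomology group, using the Thom isomorphism and the structure of $H^*(B\Pin^-(2);\bZ/2)$. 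The main obstacle is precisely this last step — computing the exact $2$-adic valuation, i.e.\ distinguishing "divisible by $4^i$" from "divisible by $2\cdot 4^i$" — which requires genuine knowledge of the $2$-torsion and the behaviour of the Thom class in $H^*(\MT{Pin^\pm}{2})$; this is where the $\Pin^+$/$\Pin^-$ asymmetry (reflected in $w_2$ versus $w_2+w_1^2$) enters and is the reason one only gets a range in the $\Pin^+$ case.
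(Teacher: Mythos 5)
Your setup — reducing to the divisibility of a class in $H^{4i}(\MT{Pin^\pm}{2};\bZ)$ via the group-completion equivalence, using $\bZ[\tfrac12]$-indivisibility from \cite{ERW} to kill odd factors, and obtaining the lower bound $4^i \mid \zeta_i$ from the fact that $p_1$ of the tautological $\Pin^\pm$-bundle is divisible by $4$ — matches the paper's argument and is fine.

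However, there is a genuine gap at precisely the point you flag as the ``main obstacle'': you never produce the upper bound. The paper does \emph{not} compute the $2$-adic valuation of $u\cdot p_1^i$ abstractly in spectrum cohomology; instead, it exhibits a concrete detecting example. For $\Pin^-$ it takes the projectivised bundle $\bR\bP^2 \to \bP(\gamma_3^{\Spin}) \to B\Spin(3)$, verifies this is a $\Pin^-$-surface bundle by computing $w(T^v)$, shows $p_1(T^v) = \pi^* p_1(\gamma_3^{\Spin})$ (the correction term $\beta(x)^2$ vanishes as $2$-torsion in a torsion-free group), and then $\zeta_i = \trf^*_\pi(\pi^* p_1^i) = \chi(\bR\bP^2)\cdot p_1^i = 4^i\eta_1^i \in H^*(B\Spin(3);\bZ) = \bZ[\eta_1]$, which is divisible by exactly $4^i$; universality of $\zeta_i$ on $\Omega^\infty\MT{Pin^-}{2}$ then transfers the bound. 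For $\Pin^+$ the paper does something different again: it pulls $\zeta_i$ back along $\Omega^\infty\MT{Spin}{2}\to\Omega^\infty\MT{Pin^+}{2}$, identifies the pullback with $\kappa_{2i}$, and cites Ebert's computation that $\kappa_{2i}$ is divisible by exactly $2^{2i+1} = 2\cdot 4^i$. Your speculation that the $\Pin^+$ slack ``comes from $w_2$ versus $w_2+w_1^2$'' does not lead anywhere concrete, and no example or spectrum-level computation is actually carried out in your proposal. Without one of these inputs the proposition is not proved; the lower bound and the odd-primary triviality only bracket the answer between $4^i$ and $2^k\cdot 4^i$ for unknown $k$.
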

\begin{proof}
Note that the class $p_1 \in H^*(B\Pin^\pm(2);\bZ)$ is (uniquely) divisible by 4. Thus $\zeta_i = \trf^*(p_1(T^v)^i) \in H^{4i}(\mathcal{M}^{\Pin^\pm}(S_\infty);\bZ)$ is divisible by $4^i$.

To see that it is not divisible further for $\Pin^-$-structures, consider the projectivised vector bundle $\bR\bP^2 \to \bP(\gamma_3^{\Spin}) \overset{\pi}\to B\Spin(3)$. There is a vector bundle isomorphism $\pi^*\gamma_3^{\Spin} \cong T^v \bP(\gamma_3^{\Spin}) \oplus L$ where $L$ is the real line bundle characterised by the fact that it restricts to the tautological bundle on each fibre. By the Leray--Hirsch theorem, the $\bF_2$-cohomology of the total space of this fibration is the free module
$$H^*(\bP(\gamma_3^{\Spin});\bF_2) \cong H^*(B\Spin(3);\bF_2)\langle 1, x, x^2 \rangle$$
where the class $x=w_1(L)$ satisfies $x^3 = 0$, and there is a similar decomposition for integral cohomology. The total Stiefel--Whitney class of the vertical tangent bundle is
$$w(T^v) = \pi^*w(\gamma_3^{\Spin}) \cdot w(L)^{-1} = (1 + x + x^2),$$
so $w_1(T^v)=x$ and $w_2(T^v)=x^2$, and hence $T^v$ admits a $\Pin^-$-structure. Thus this bundle is formally classified by a map $B\Spin(3) \to \mathcal{M}^{\Pin^-}(S_1) \to \Omega^\infty \MT{Pin^-}{2}$.

Recall that $H^*(B\Spin(3);\bZ) = \bZ[\eta_1]$ where $4 \eta_1 = p_1$. This may be seen as $\Spin(3) = SU(2)$ so the cohomology ring is polynomial on a degree 4 generator $\eta_1 :=-c_2(\gamma_2^{SU(2)})$, and $p_1(\gamma_3^{\Spin}) = -4c_2 = 4\eta_1$. The calculation of the first Pontrjagin class follows by taking Chern classes for the identity $\gamma_3^{\Spin} \otimes_\bR \bC = \mathrm{Sym}^2\big( \gamma_2^{SU(2)} \big)$, which we learnt from \cite[Proposition 5.2.5]{Eb}. Writing $\beta$ for the Bockstein operation, we may compute\footnote{This is a general formula for the first Pontrjagin class of a rank 3 bundle which splits off a rank 1 subbundle: $p_1(\gamma_2 \oplus \gamma_1) = p_1(\gamma_2) + \beta(w_1(\gamma_2)) \cdot \beta(w_1(\gamma_1))$, which may be easily derived from the definition of Pontrjagin classes.} $\pi^*(p_1) = p_1(T^v \oplus L) = p_1(T^v) + \beta(w_1(T^v)) \cdot \beta(w_1(L))$ so that $p_1(T^v) = \pi^*(p_1) + \beta(x)^2$. The integral class $\beta(x)^2 \in H^4(\bP(\gamma_3^{\Spin});\bZ) = \bZ \langle \eta_1 \cdot 1 \rangle$ is 2-torsion and hence zero. Thus $p_1(T^v) = \pi^*(p_1)$ so
$$\zeta_i = \trf^*_\pi(p_1(T^v)^i) = \trf^*_\pi(\pi^*(p_1^i)) = \chi(\bR \bP^2) \cdot p_1^i = p_1^i = 4^i \eta_1^i \in H^{4i}(B\Spin(3) ; \bZ)$$
which is divisible by precisely $4^{i}$. Thus $\zeta_i$ is divisible by at most $4^i$.

To pass from this information about the divisibility for bundles of $\bR\bP^2$'s to divisibility in general, it is enough to note that the $\zeta_i$ are defined universally on $\Omega^\infty \MT{Pin^-}{2}$, so by the above example must also be divisible by at most $4^i$ here. The homology equivalence (\ref{eq:GroupCompletionPinMonoid}) then implies the same divisibility in the cohomology of $\mathcal{M}^{\Pin^-}(S_\infty)$.

To see that for $\Pin^+$-structures the class $\zeta_i$ is divisible by at most $2 \cdot 4^i$, note that pulling back $\zeta_i$ via $\Omega^\infty \MT{Spin}{2} \to \Omega^\infty\MT{Pin^+}{2}$ gives the class $\kappa_{2i}$, which Ebert \cite{Eb} has shown to be divisible by precisely $2^{2i+1}$.
\end{proof}

As only closed non-orientable surfaces of \textit{even} Euler characteristic admit $\Pin^+$-structures, we cannot hope to find such a structure on the projectivisation of a rank 3 vector bundle, so making precise the upper bound on the divisibility of $\zeta_i$ in this case is more difficult.

\appendix
\section{Computing $\pi_1(\MT{Pin^\pm}{2})$}\label{sec:HomotopyOfPinPM}

We first state the structure of the $\bF_2$-cohomology algebras of $B\Pin^\pm(2)$, as algebras over the mod 2 Steenrod algebra $\mathcal{A}_2$. These may be computed from the Serre spectral sequence for the principal fibrations (\ref{eq:PinPMDefinition}). In the $\Pin^-$ case one must consult the Eilenberg--Moore spectral sequence for the fibration $B\Pin^-(2) \to BO(2) \to K(\bZ/2, 2)$ to obtain the $\mathcal{A}_2$-module structure.

\begin{prop}
There is an isomorphism of rings
$$H^*(B\Pin^+(2);\bF_2) \cong \bF_2[w_1, x_2].$$
Here, $w_1$ is the first Stiefel--Whitney class of the universal $\Pin^+$-bundle, and the higher Stiefel--Whitney classes vanish. The $\mathcal{A}_2$-module structure is determined by the ring structure and $Sq^1(x_2) = w_1 \cdot x_2$.
\end{prop}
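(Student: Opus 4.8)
The plan is to run the Serre spectral sequence of the principal fibration
$$B\bZ/2 \lra B\Pin^+(2) \overset{\theta^+}\lra BO(2)$$
from (\ref{eq:PinPMDefinition}), which by the defining property of $\Pin^+(2)$ is classified by $w_2 \colon BO(2) \to K(\bZ/2,2)$.

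With $\bF_2$-coefficients the $E_2$-page is $\bF_2[w_1,w_2]\otimes\bF_2[t]$ with $|t|=1$, and since the fibration is classified by $w_2$ the fibre fundamental class transgresses to it: $d_2(t)=w_2$. By multiplicativity and working in characteristic $2$ one gets $d_2(t^{2k+1})=t^{2k}w_2$ and $d_2(t^{2k})=0$, and a short Koszul-type computation then gives $E_3=\bF_2[w_1,t^2]$. To see the spectral sequence collapses here it is enough to compute $d_3(t^2)$: by Kudo's transgression theorem $t^2=Sq^1 t$ transgresses to $Sq^1 w_2 = w_1 w_2 + w_3 = w_1 w_2$ (rank $2$), but $w_1 w_2 = d_2(w_1 t)$ already vanishes in $E_3$, so $d_3(t^2)=0$ and likewise all later differentials vanish. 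Hence $E_\infty=\bF_2[w_1,t^2]$; as this associated graded is polynomial on classes in degrees $1$ and $2$ there are no multiplicative extension problems, so lifting $t^2$ to a class $x_2\in H^2(B\Pin^+(2);\bF_2)$ and noting that the monomials $w_1^a x_2^b$ have linearly independent leading terms gives $H^*(B\Pin^+(2);\bF_2)=\bF_2[w_1,x_2]$ as a ring. Here $w_1$ is the edge class, which is $(\theta^+)^*w_1 = w_1(\gamma_2^{\Pin^+})$; moreover $w_2(\gamma_2^{\Pin^+}) = (\theta^+)^*w_2 = 0$ since $w_2\in\mathrm{im}(d_2)$ and the edge homomorphism kills it, and $w_i(\gamma_2^{\Pin^+})=0$ for $i\geq 3$ for dimension reasons.

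It then remains to determine the $\mathcal{A}_2$-module structure, and the only non-formal datum is $Sq^1 x_2$: one has $Sq^1 w_1 = w_1^2$, $Sq^2 x_2 = x_2^2$, all other operations on the generators vanish for degree reasons, and the Cartan formula takes over. The spectral sequence does not fix $Sq^1 x_2$, so to finish I would invoke the classical fact that $\Pin^+(2)\cong O(2)$ as topological groups — visible from the Clifford-algebra model $\Pin^+(2)=\Spin(2)\cup e_1\Spin(2)\cong S^1\rtimes_{-1}\bZ/2$ — whence $B\Pin^+(2)\simeq BO(2)$ and $H^*(B\Pin^+(2);\bF_2)\cong\bF_2[c_1,c_2]$ as an $\mathcal{A}_2$-algebra with $Sq^1 c_1 = c_1^2$ and $Sq^1 c_2 = c_1 c_2$; the degree-$1$ generator must be $w_1=c_1$, and choosing $x_2:=c_2$ yields $Sq^1 x_2 = c_1 c_2 = w_1 x_2$.

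I expect the main obstacle to be precisely this last point. The transgression, the collapse, the ring structure, and the vanishing of the higher Stiefel--Whitney classes all follow routinely from the spectral sequence together with the Wu formula, but $Sq^1 x_2$ carries genuinely extra information: the spectral sequence leaves it ambiguous (and the ambiguity persists even after adjusting $x_2$ by $w_1^2$), so pinning it down really requires the group isomorphism $\Pin^+(2)\cong O(2)$ or an equivalent direct computation.
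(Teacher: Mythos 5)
Your argument is correct, and it follows the route the paper indicates (it only says ``Serre spectral sequence for the principal fibration'' and gives no further details). You carry out the Serre spectral sequence computation cleanly: $d_2(t)=w_2$, $E_3=\bF_2[w_1,t^2]$, the Kudo transgression argument gives $d_3(t^2)=[Sq^1w_2]=[w_1w_2]=0$, higher differentials vanish for degree reasons, and there are no multiplicative extension problems since the two generators of the associated graded sit in degrees $1$ and $2$. The identification of $w_1$ and the vanishing of $w_2(\gamma_2^{\Pin^+})$ via the edge map are also correct.

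The one place where you add something not spelled out in the paper is the determination of $Sq^1x_2$. You are right that this is a genuine gap in a ``pure'' Serre spectral sequence argument: the edge constraints, Kudo's theorem, and the relation $Sq^1Sq^1=0$ together only pin $Sq^1x_2$ down to one of $\{0,\,w_1x_2,\,w_1x_2+w_1^3\}$, and the last two differ only by the substitution $x_2\mapsto x_2+w_1^2$ (since $Sq^1(w_1^2)=0$), so the real residual ambiguity is whether $Sq^1x_2$ vanishes. Resolving this via the group isomorphism $\Pin^+(2)\cong O(2)$ (valid because a unit vector in $\Pin^+(2)$ squares to $+1$, so the reflections lift to order-$2$ elements), and hence $B\Pin^+(2)\simeq BO(2)$ with $Sq^1$ determined by the Wu formula, is exactly the right fix and is surely what the author has in mind. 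Note though that the resulting homotopy equivalence does \emph{not} intertwine $\theta^+$ with the identity of $BO(2)$, which is why one still needs the spectral sequence to show $w_2(\gamma_2^{\Pin^+})=0$: the $\Pin^+$ tautological bundle is not the tautological bundle of the ``other'' $O(2)$. Your proof, with the isomorphism made explicit, is more complete than what the paper records.
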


\begin{prop}
There is an isomorphism of rings
$$H^*(B\Pin^-(2);\bF_2) \cong \bF_2[w_1, x_4]/(w_1^3).$$
Here, $w_1$ is the first Stiefel--Whitney class of the universal $\Pin^-$-bundle, $w_2(\gamma_2^{\Pin^-}) = w_1^2$, and the higher Stiefel--Whitney classes vanish. The class $x_4$ in degree 4 is the reduction of an integral class, and hence $Sq^1$ vanishes on it. Furthermore, $Sq^2$ also vanishes on it. This determines the structure as an $\mathcal{A}_2$-module.
\end{prop}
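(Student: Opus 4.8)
The plan is to run the Serre spectral sequence of the principal fibration $B\bZ/2 \to B\Pin^-(2) \xrightarrow{\theta^-} BO(2)$ from (\ref{eq:PinPMDefinition}), and then to pin down the $\mathcal{A}_2$-module structure by comparison with $\bH\bP^\infty$. Writing $t \in H^1(B\bZ/2;\bF_2)$ for the generator, we have $E_2^{*,*} = \bF_2[w_1,w_2]\otimes\bF_2[t]$. This fibration is pulled back from the path--loop fibration over $K(\bZ/2,2)$ along the map classifying the $k$-invariant $w_2 + w_1^2 \in H^2(BO(2);\bF_2)$, so $t$ transgresses with $d_2(t) = w_2 + w_1^2$. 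Replacing $w_2$ by $v := w_2 + w_1^2$ as a polynomial generator, a routine computation of $d_2$ gives $E_3 = \bF_2[w_1]\otimes\bF_2[t^2]$, where $v = 0$, i.e.\ $w_2 = w_1^2$. By the Kudo transgression theorem, $t^2 = Sq^1 t$ transgresses under $d_3$ to $Sq^1(w_2 + w_1^2) = w_1 w_2 = w_1^3$; hence $d_3$ is the Koszul differential on $\bF_2[w_1]\otimes\bF_2[t^2]$ with $d_3(w_1) = 0$ and $d_3(t^2) = w_1^3$, whose homology is $E_4 = \big(\bF_2[w_1]/(w_1^3)\big)\otimes\bF_2[t^4]$.

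Next I would show the spectral sequence collapses at $E_4$. It is generated as an algebra by $w_1$, which is pulled back from the base and hence a permanent cycle, and by $t^4$; and for $r \geq 4$ the target $E_r^{r,5-r}$ of a differential on $t^4$ vanishes for degree reasons, using that $\bF_2[w_1]/(w_1^3)$ is concentrated in degrees $\leq 2$ and $\bF_2[t^4]$ only in degrees divisible by $4$. So $E_4 = E_\infty$. The resulting additive description gives $H^3(B\Pin^-(2);\bF_2) = 0$, so $w_1^3 = 0$ in $H^*$; picking a lift $x_4 \in H^4$ of $t^4$ and comparing dimensions yields the ring isomorphism $H^*(B\Pin^-(2);\bF_2) \cong \bF_2[w_1,x_4]/(w_1^3)$. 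The Stiefel--Whitney assertions are read off along the way: $w_1$ is by construction $w_1(\gamma_2^{\Pin^-})$; the transgression identifies $w_2(\gamma_2^{\Pin^-})$ with $w_1^2$ in $H^2 = \bF_2\langle w_1^2\rangle$; and the higher classes vanish as $\gamma_2^{\Pin^-}$ has rank $2$.

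For the $\mathcal{A}_2$-module structure it suffices to show that $x_4$ is the reduction of an integral class and that $Sq^1 x_4 = Sq^2 x_4 = 0$: together with $Sq^1 w_1 = w_1^2$, instability, and the vanishing $H^3 = H^7 = 0$, this determines $Sq^j$ on the generators, and the Cartan formula then determines the whole $\mathcal{A}_2$-algebra. I would obtain this from the standard identification $\Pin^-(2) \cong U(1) \cup j\,U(1) \subset Sp(1)$, which gives a map $f : B\Pin^-(2) \to BSp(1) = \bH\bP^\infty$. Restricting further along $BU(1) \to B\Pin^-(2)$, the generator $u \in H^4(\bH\bP^\infty;\bF_2)$ pulls back to $c^2 \neq 0$ (the defining $SU(2)$-representation restricted to the maximal torus is $\bC(1)\oplus\bC(-1)$, with $-c_2 = c^2$), so $f^*u$ is nonzero and hence equals $x_4$. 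As $u$ is the reduction of the integral generator of $H^4(\bH\bP^\infty;\bZ)$ and $H^5(\bH\bP^\infty;\bF_2) = H^6(\bH\bP^\infty;\bF_2) = 0$, naturality gives that $x_4$ is integral with $Sq^1 x_4 = Sq^2 x_4 = 0$. (One may instead, as the text indicates, extract the $\mathcal{A}_2$-structure from the Eilenberg--Moore spectral sequence of $B\Pin^-(2) \to BO(2) \to K(\bZ/2,2)$, following $\iota_3 = Sq^1\iota_2$ and $\iota_5 = Sq^2\iota_3$ through the Koszul model for $\mathrm{Tor}$, but this is more laborious.)

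The step I expect to be the main obstacle is precisely the vanishing of $Sq^2 x_4$: this is not forced by the ring structure or by $Sq^1 Sq^1 = 0$ (a priori $Sq^2 x_4 \in \{0, w_1^2 x_4\}$), so some genuinely new ingredient — the comparison map to $\bH\bP^\infty$, or the Eilenberg--Moore computation — is unavoidable. Everything else is bookkeeping in the Serre spectral sequence.
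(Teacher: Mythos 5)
Your Serre spectral sequence computation is correct and matches the route the paper indicates for the ring structure: $d_2(t)=w_2+w_1^2$, Kudo transgression giving $d_3(t^2)=Sq^1(w_2+w_1^2)=w_1w_2=w_1^3$ on $E_3$, and the collapse argument at $E_4$ are all sound, and the multiplicative extension is handled by $H^3=0$. Where you genuinely diverge from the paper is in pinning down the $\mathcal{A}_2$-module structure. The paper invokes the Eilenberg--Moore spectral sequence of $B\Pin^-(2)\to BO(2)\to K(\bZ/2,2)$, which carries $Sq^1\iota_2=\iota_3$ and $Sq^2\iota_3=\iota_5$ through the cobar/Koszul model to show that $x_4$ supports no nontrivial $Sq^1$ or $Sq^2$; you instead use the group inclusion $\Pin^-(2)\cong N(U(1))=U(1)\cup jU(1)\subset Sp(1)$ to get a comparison map $f\co B\Pin^-(2)\to\bH\bP^\infty$, check $f^*u\neq 0$ by restricting to $BU(1)$ (where $u\mapsto c^2$), conclude $f^*u=x_4$ since $H^4(B\Pin^-(2);\bF_2)$ is one-dimensional, and then read off integrality and $Sq^1 x_4=Sq^2 x_4=0$ from $H^5(\bH\bP^\infty)=H^6(\bH\bP^\infty)=0$. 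Your argument is cleaner and more geometric; it replaces a nontrivial spectral sequence computation with naturality along an explicit map of classifying spaces. The only thing worth making explicit that you leave implicit is that, together with instability ($Sq^1w_1=w_1^2$, $Sq^4x_4=x_4^2$, and $Sq^j=0$ above the degree), the relations $Sq^3=Sq^1Sq^2$ and the Cartan formula do indeed force the entire $\mathcal{A}_2$-algebra structure from your computed operations on the generators; you say this but a reader would benefit from one more line. You are also right that the genuinely hard point is $Sq^2x_4=0$, which the ring structure alone cannot see, and your $\bH\bP^\infty$ comparison is an efficient way to get it.
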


\begin{prop}\label{prop:OddCohomologyPin}
There is an isomorphism $H^*(B\Pin^\pm(2);\bZ[\frac{1}{2}]) \cong \bZ[\frac{1}{2}][p_1]$. The class $p_1$ in degree 4 is the first Pontrjagin class of the universal $\Pin^\pm$-bundle.
\end{prop}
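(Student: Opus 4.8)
The plan is to deduce the computation from two standard facts: that $B\bZ/2$ is $\bZ[\tfrac{1}{2}]$-acyclic, and that $H^*(BO(2);\bZ[\tfrac{1}{2}]) \cong \bZ[\tfrac{1}{2}][p_1]$.

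First I would exploit the fibration $B\bZ/2 \to B\Pin^\pm(2) \overset{\theta^\pm}\lra BO(2)$ coming from the defining central extension, i.e.\ the sequence (\ref{eq:PinPMDefinition}). In the Serre spectral sequence with $\bZ[\tfrac{1}{2}]$-coefficients, the fibre $B\bZ/2$ has $H^0 = \bZ[\tfrac{1}{2}]$ and $\widetilde{H}^* = 0$ (its reduced integral cohomology is all $2$-torsion), and the coefficient system on the base is trivial, since $\pi_1(BO(2))$ can only act trivially on the one-dimensional group $H^0(B\bZ/2;\bZ[\tfrac{1}{2}])$. Hence $E_2$ is concentrated in the row $q=0$, the spectral sequence collapses, and the edge homomorphism $(\theta^\pm)^*$ is an isomorphism $H^*(BO(2);\bZ[\tfrac{1}{2}]) \overset{\sim}\lra H^*(B\Pin^\pm(2);\bZ[\tfrac{1}{2}])$.

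Then I would record the computation of the target. For the double cover $\pi : BSO(2) \to BO(2)$ with deck group $\bZ/2$, the transfer argument (valid because $2$ is invertible: $\tfrac{1}{2}\mathrm{tr}_\pi \circ \pi^* = \mathrm{id}$ and $\pi^* \circ \tfrac{1}{2}\mathrm{tr}_\pi = \tfrac{1}{2}(1 + \sigma^*)$) identifies $H^*(BO(2);\bZ[\tfrac{1}{2}])$ with the $\bZ/2$-invariant subring of $H^*(BSO(2);\bZ[\tfrac{1}{2}]) = \bZ[\tfrac{1}{2}][e]$, where $e$ is the Euler class; the deck transformation reverses orientations and therefore acts on $\bC\bP^\infty = BSO(2)$ by complex conjugation, sending $e \mapsto -e$. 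The invariants are $\bZ[\tfrac{1}{2}][e^2]$, a polynomial algebra on a degree-$4$ class. Finally, using that the complexification of an oriented $2$-plane bundle $V$ splits as $L \oplus \bar{L}$ with $c_1(L) = e(V)$, one gets $p_1(V) = -c_2(V\otimes_\bR\bC) = e(V)^2$; applied to $\gamma_2^+$ this shows $e^2 = \pi^* p_1(\gamma_2)$, so $H^*(BO(2);\bZ[\tfrac{1}{2}]) = \bZ[\tfrac{1}{2}][p_1(\gamma_2)]$. Transporting along $(\theta^\pm)^*$ gives $H^*(B\Pin^\pm(2);\bZ[\tfrac{1}{2}]) = \bZ[\tfrac{1}{2}][p_1]$ with $p_1 = (\theta^\pm)^* p_1(\gamma_2) = p_1(\gamma_2^{\Pin^\pm})$.

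There is no real obstacle here; the only points needing a moment's care are checking that the coefficient system in the Serre spectral sequence is trivial (so that the $\pi_1$-action on the fibre is a non-issue) and that the transfer identification of $H^*(BO(2))$ with invariants requires only $2$, not all of $\bQ$, to be invertible. Alternatively one could simply quote $H^*(BO(2);\bZ[\tfrac{1}{2}]) \cong \bZ[\tfrac{1}{2}][p_1]$ from the literature, in which case only the spectral sequence collapse in the first step remains to be said.
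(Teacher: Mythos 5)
Your argument is correct. The paper states Proposition~\ref{prop:OddCohomologyPin} without proof (treating it as standard), and the appendix's introductory remark points to exactly the approach you take: the Serre spectral sequence of the defining fibration $B\bZ/2 \to B\Pin^\pm(2) \to BO(2)$, which with $\bZ[\tfrac{1}{2}]$-coefficients collapses because the fibre is $\bZ[\tfrac{1}{2}]$-acyclic (and the $\pi_1(BO(2))$-action on $H^0$ of a connected fibre is automatically trivial), so $(\theta^\pm)^*$ is an isomorphism; combined with the transfer computation $H^*(BO(2);\bZ[\tfrac{1}{2}]) = H^*(BSO(2);\bZ[\tfrac{1}{2}])^{\bZ/2} = \bZ[\tfrac{1}{2}][e^2] = \bZ[\tfrac{1}{2}][p_1]$ and naturality of $p_1$, this gives the stated result. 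Both steps are sound, and this is the intended argument.
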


Recall that $\MT{Pin^\pm}{2}$ is defined to be the Thom spectrum of the virtual bundle $-\gamma_2^{\Pin^\pm} \to B\Pin^\pm(2)$. This has a Thom isomorphism in $\bF_2$-cohomology, so armed with the above data, one may easily compute the $\bF_2$-cohomology of $\MT{Pin^\pm}{2}$ as an $\mathcal{A}_2$-module, and hence compute the $E_2$-page of the Adams spectral sequence in small degrees, which we include as Figure \ref{fig:AdamsSSCharts}. Note also that by (\ref{eq:ZHalfHomology}) the group $\pi_1(\MT{Pin^\pm}{2})$ has no odd torsion.

\begin{figure}[h]
\centering
\includegraphics[bb=71 582 432 716]{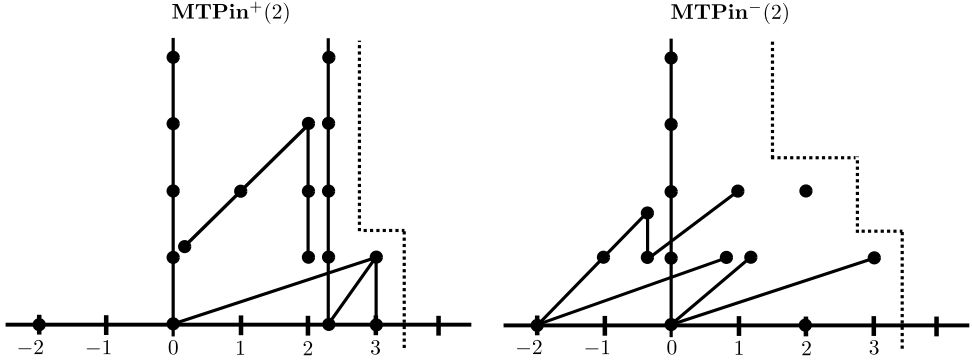}
\caption{Partial $E_2$-pages of the Adams spectral sequences converging to the 2-primary homotopy groups of the spectra $\MT{Pin^+}{2}$ and $\MT{Pin^-}{2}$ respectively. Vertical lines correspond to multiplication by $2 \in \pi_0(\mathbf{S})$, lines of slope 1 correspond to multiplication by $\eta \in \pi_1(\mathbf{S})$, and lines of slope $1/3$ correspond to multiplication by $\nu \in \pi_3(\mathbf{S})$. The diagram is complete to the left of the dotted line.}
\label{fig:AdamsSSCharts}
\end{figure}

\begin{thm}
$\pi_1(\MT{Pin^+}{2}) = \bZ/2$.
\end{thm}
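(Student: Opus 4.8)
The plan is to compute $\pi_1(\MT{Pin^+}{2})$ with the mod $2$ Adams spectral sequence, after first reducing the problem to a purely $2$-primary one.

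\textbf{Step 1: reduction to a $2$-primary computation.} Since $H^*(B\Pin^+(2);\bZ)$ is finitely generated in each degree and $\MT{Pin^+}{2}$ is bounded below (its bottom cell sits in degree $-2$), the homotopy groups of $\MT{Pin^+}{2}$ are finitely generated. By the $\bZ[\tfrac12]$-equivalence $\MT{Pin^+}{2}[\tfrac12]\simeq \Sigma^\infty\bH\bP^\infty_+[\tfrac12]$ of (\ref{eq:ZHalfHomology}), and since $\Sigma^\infty\bH\bP^\infty$ is $3$-connected, $\pi_1(\MT{Pin^+}{2})\otimes\bZ[\tfrac12]\cong \pi_1(\Sigma^\infty\bH\bP^\infty_+)\otimes\bZ[\tfrac12]\cong\pi_1^s\otimes\bZ[\tfrac12]=0$. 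Hence $\pi_1(\MT{Pin^+}{2})$ is a finite $2$-group, so it is unchanged by $2$-completion and can be read off the $E_\infty$-page of the Adams spectral sequence in the $1$-stem.

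\textbf{Step 2: the $\mathcal{A}_2$-module $H^*(\MT{Pin^+}{2};\bF_2)$.} By the Thom isomorphism this is, as a module over $H^*(B\Pin^+(2);\bF_2)=\bF_2[w_1,x_2]$, free of rank one on a Thom class $U$ in degree $-2$; the $\mathcal{A}_2$-action is then determined by the Cartan formula and instability, the relation $Sq^1 x_2=w_1 x_2$ from the proposition, and $Sq(U)=U\cdot\overline{w}(\gamma_2^{\Pin^+})$, where $\overline{w}$ is the inverse total Stiefel--Whitney class. Because the higher Stiefel--Whitney classes of $\gamma_2^{\Pin^+}$ vanish, $\overline{w}(\gamma_2^{\Pin^+})=(1+w_1)^{-1}=1+w_1+w_1^2+\cdots$, so $Sq^i(U)=U w_1^i$. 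A short calculation with a minimal free resolution of this module in low internal degrees produces the $E_2$-page displayed in Figure \ref{fig:AdamsSSCharts}; in particular one finds $\mathrm{Ext}^{0,1}_{\mathcal{A}_2}(H^*(\MT{Pin^+}{2};\bF_2),\bF_2)\cong\bF_2$, generated by the class dual to the module generator $U w_1 x_2$ (note $U w_1^3=Sq^3 U$ is decomposable), and no other classes in the $1$-stem.

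\textbf{Step 3: conclude.} A filtration $0$ class can never be the target of an Adams differential, and the chart of Figure \ref{fig:AdamsSSCharts}---complete in this range---shows no outgoing differential is possible: the only place such a differential could land is the $0$-stem, but the $v_0$-tower there detects the $\bZ$-summand of $\pi_0(\MT{Pin^+}{2})\cong\bZ\times\bZ/2$ (which is infinitely $v_0$-divisible, hence cannot be hit) and the remaining $v_0$-torsion class detects the $\bZ/2$-summand of $\pi_0$ (which must also survive). Thus $E_2=E_\infty$ in the $1$-stem, consisting of a single $\bF_2$, and therefore $\pi_1(\MT{Pin^+}{2})\cong\bZ/2$.

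\textbf{Main obstacle.} The only genuine work is Step 2: pinning down $H^*(\MT{Pin^+}{2};\bF_2)$ as an $\mathcal{A}_2$-module and computing enough of its minimal free resolution to control the $E_2$-page through the $1$-stem (and the parallel computation for $\MT{Pin^-}{2}$ used elsewhere in the appendix). Everything else is formal given the $\bZ[\tfrac12]$-calculation already in hand, the identification of $\pi_0$ from the group-completion statement, and convergence of the Adams spectral sequence for the connective finite-type spectrum $\MT{Pin^+}{2}$.
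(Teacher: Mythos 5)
Your proposal is correct and follows essentially the same path as the paper: identify the $\mathcal{A}_2$-module $H^*(\MT{Pin^+}{2};\bF_2)$ via the Thom isomorphism (using that $w(\gamma_2^{\Pin^+})=1+w_1$), compute the Adams $E_2$-page in the relevant range (the chart of Figure~\ref{fig:AdamsSSCharts}), observe that a filtration-zero class cannot be a target, and rule out the only possible outgoing differential from the $1$-stem into the $0$-stem. The one point where you diverge from the paper's (very terse) argument is the rationale for that last vanishing: the paper invokes the $\pi_*(\mathbf{S})$-module structure on the chart directly (e.g.\ $h_0$ acts trivially on the $(0,1)$ class since $\mathrm{Ext}^{1,2}=0$, but nontrivially on the $v_0$-tower, so $d_r$ cannot map into it), whereas you appeal to the independently established $\pi_0(\MT{Pin^+}{2})\cong\bZ\oplus\bZ/2$ from the group-completion discussion, which forces every class in the $0$-stem to survive. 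Both are valid; your version is logically dependent on the moduli-space computation of $\pi_0$, while the paper's is internal to the Adams chart, but yours has the advantage of being fully explicit about the $2$-primary reduction and the Thom-module bookkeeping, which the paper leaves to the reader.
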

\begin{proof}
The only possible differential landing in total degree 1 can be seen not to exist through the $\pi_*(\mathbf{S})$-module structure.
\end{proof}

Consider the cofibration sequence (see e.g. \cite[Proposition 3.1]{GMTW}) of spectra
$$\MT{Spin}{2} \lra \MT{Pin^-}{2} \lra \Th(-\gamma_2^{\Pin^-} \oplus \gamma_1^{\pm 1} \to B\Pin^-(2)) =:\mathbf{C}$$
where $\gamma_1^{\pm 1}$ denotes the unique non-trivial real line bundle over $B\Pin^-(2)$. The total Stiefel--Whitney class of $-\gamma_2^{\Pin^-} \oplus \gamma_1^{\pm 1}$ is $(1+w_1+w_1^2)(1+w_1) = 1$, which means that $H^*(\mathbf{C};\bF_2) \cong \Sigma^{-1}H^*(B\Pin^{-}(2)_+;\bF_2)$ as modules over the Steenrod algebra. As $x_4$ supports no non-trivial Steenrod operations, this module splits as
$$\Sigma^{-1} M \oplus \Sigma^3 M \oplus \Sigma^7 M \oplus \cdots$$
where $M = \bF_2\langle 1, w_1, w_1^2\rangle$ is a module over the Steenrod algebra with unique operation $Sq^1(w_1) = w_1^2$.

Similarly, the total Stiefel--Whitney class of $-\gamma_2^{\Spin} \to B\Spin(2)$ is 1, so there is an isomorphism $H^*(\MT{Spin}{2};\bF_2) \cong \Sigma^{-2}H^*(B\Spin(2)_+;\bF_2)$ of modules over the Steenrod algebra. This allows us to calculate the $E_2$-pages of the Adams spectral sequences converging to the 2-primary homotopy groups of these spectra, which we include as Figure \ref{fig:AdamsSSCharts2}.

\begin{figure}[h]
\centering
\includegraphics[bb=0 0 334 133]{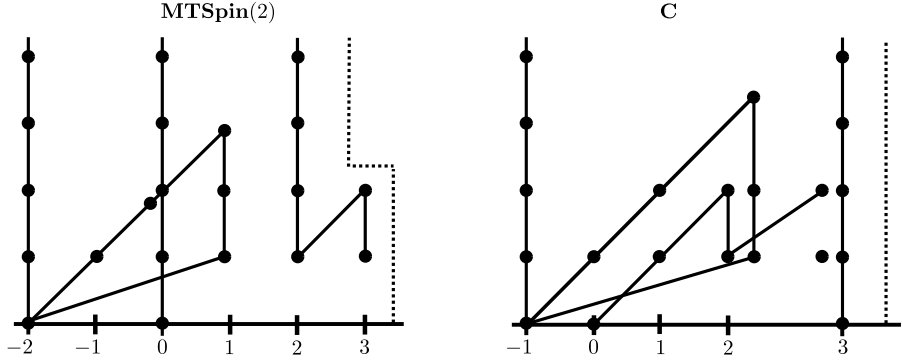}
\caption{Partial $E_2$-pages of the Adams spectral sequences converging to the 2-primary homotopy groups of the spectra $\MT{Spin}{2}$ and $\gC$ respectively. The diagram is complete to the left of the dotted line.}
\label{fig:AdamsSSCharts2}
\end{figure}

\begin{thm}
$\pi_1(\MT{Pin^-}{2}) = (\bZ/2)^3$.
\end{thm}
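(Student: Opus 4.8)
The plan is to read off $\pi_1(\MT{Pin^-}{2})$ from the long exact sequence in homotopy groups associated to the cofibration sequence $\MT{Spin}{2} \to \MT{Pin^-}{2} \to \mathbf{C}$, computing the homotopy of the two outer terms in low degrees with the Adams spectral sequences of Figure \ref{fig:AdamsSSCharts2} and then resolving the resulting extension problem with the aid of Figure \ref{fig:AdamsSSCharts}. First I would pin down $\pi_*(\mathbf{C})$ in degrees $\leq 2$: since $H^*(\mathbf{C};\bF_2)$ splits as $\bigoplus_{k\geq 0}\Sigma^{4k-1}M$ with $M=\bF_2\langle 1,w_1,w_1^2\rangle$ carrying only the operation $Sq^1 w_1 = w_1^2$, the module $M$ is realised, after $2$-completion, by $\mathbf{S}\vee\Sigma\mathbf{S}/2$ (the mod $2$ Moore spectrum), so $\mathbf{C}\simeq\bigvee_{k\geq 0}(\Sigma^{4k-1}\mathbf{S}\vee\Sigma^{4k}\mathbf{S}/2)$ and only the bottom summand $\Sigma^{-1}M\simeq\mathbf{S}^{-1}\vee\mathbf{S}/2$ contributes in total degrees $\leq 2$. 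This gives $\pi_0(\mathbf{C})\cong\pi_1(\mathbf{C})\cong(\bZ/2)^2$, while the $2$-primary part of $\pi_2(\mathbf{C})$ is $\bZ/8\oplus\bZ/4$. In the same way, using $H^*(\MT{Spin}{2};\bF_2)\cong\Sigma^{-2}H^*(B\Spin(2)_+;\bF_2)$ together with the $\pi_*(\mathbf{S})$-module structure displayed in Figure \ref{fig:AdamsSSCharts2}, one finds that $\mathbf{S}^{-2}$ splits off, that $\pi_0(\MT{Spin}{2})\cong\bZ\oplus\bZ/2$, and that the $2$-primary part of $\pi_1(\MT{Spin}{2})$ is the cyclic group $(\pi_3^s)_{(2)}=\bZ/8$; there is no odd torsion in degree $1$ by \eqref{eq:ZHalfHomology}.

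Feeding these into the segment
$$\pi_2(\mathbf{C})\overset{\partial}\lra\pi_1(\MT{Spin}{2})\lra\pi_1(\MT{Pin^-}{2})\lra\pi_1(\mathbf{C})\overset{\partial}\lra\pi_0(\MT{Spin}{2})$$
of the long exact sequence, I would first show that the right-hand boundary map $(\bZ/2)^2\to\bZ\oplus\bZ/2$ vanishes (its image lies in the torsion subgroup, and the relevant classes are $\eta$-multiples that map across the cofibration as such), so that $\pi_1(\MT{Pin^-}{2})\to\pi_1(\mathbf{C})=(\bZ/2)^2$ is surjective. Hence $\pi_1(\MT{Pin^-}{2})$ is an extension of $(\bZ/2)^2$ by $\Coker\big(\pi_2(\mathbf{C})\overset{\partial}\to\pi_1(\MT{Spin}{2})\big)$; since $\pi_1(\MT{Spin}{2})$ is cyclic this cokernel is cyclic, and identifying the boundary map $\partial$ — which amounts to a secondary-operation computation, precisely the data encoded by Figure \ref{fig:AdamsSSCharts} — shows the cokernel is $\bZ/2$. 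Thus $\pi_1(\MT{Pin^-}{2})$ has order $8$.

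It then remains to show the extension $0\to\bZ/2\to\pi_1(\MT{Pin^-}{2})\to(\bZ/2)^2\to 0$ is trivial, i.e. that the group has exponent $2$ rather than being $\bZ/4\oplus\bZ/2$; this is the main obstacle. I would settle it on the Adams $E_\infty$-page of Figure \ref{fig:AdamsSSCharts}: the three surviving classes in the $1$-stem are not connected by the vertical ($h_0$-)lines, and a hidden $2$-extension is ruled out because the higher filtration in the $1$-stem is already exhausted and the putative $2$-multiple is forced to vanish by an $\eta$-relation visible in the chart — alternatively, one checks directly that the surjection onto $\pi_1(\mathbf{C})=(\bZ/2)^2$ splits because the cofibration $\MT{Spin}{2}\to\MT{Pin^-}{2}\to\mathbf{C}$ admits a section over the skeleta relevant to degree $1$. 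Either route yields $\pi_1(\MT{Pin^-}{2})\cong(\bZ/2)^3$. The delicate points are the identification of the boundary map $\partial$ and the exclusion of the hidden extension; the rest is formal manipulation of the cofibration sequence and the cohomology data already assembled above.
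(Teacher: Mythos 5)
Your overall strategy --- bounding $\pi_1$ from above with the Adams spectral sequence and from below with the long exact sequence of the cofibration $\MT{Spin}{2} \to \MT{Pin^-}{2} \to \gC$ --- is the same one the paper uses, and your low-degree homotopy computations of $\gC$ and $\MT{Spin}{2}$ are essentially correct. But the step that actually carries the weight of the argument is not proved.

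You assert that $\Coker\big(\partial : \pi_2(\gC) \to \pi_1(\MT{Spin}{2})\big) = \bZ/2$ and say that identifying $\partial$ ``amounts to a secondary-operation computation, precisely the data encoded by Figure \ref{fig:AdamsSSCharts}.'' This is circular. The chart in Figure \ref{fig:AdamsSSCharts} has an undetermined $d_2$ from the $\bF_2$ in total degree $2$, filtration $0$, into the $\bF_2$ in total degree $1$, filtration $2$; until that differential is ruled out the chart does not distinguish between $\Coker(\partial) = \bZ/2$ and $\Coker(\partial) = 0$, and ruling out that differential is precisely the point of the whole argument. The paper sidesteps any computation of $\partial$: it observes that $\pi_1(\MT{Spin}{2})$ is cyclic, generated by $\nu$ times the generator of $\pi_{-2}(\MT{Spin}{2}) = \bZ$, and that this generator maps onto the generator of $\pi_{-2}(\MT{Pin^-}{2}) = \bZ/2$ (established earlier in the long exact sequence); since $\nu$ acting on $\pi_{-2}(\MT{Pin^-}{2})$ is nonzero, the image of $\pi_1(\MT{Spin}{2})$ in $\pi_1(\MT{Pin^-}{2})$ is nonzero. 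That alone shows the order exceeds $|(\bZ/2)^2| = 4$, which forces the potential $d_2$ to vanish and, with the Adams upper bound of $8$ and the absence of $h_0$-lines in degree $1$, gives $(\bZ/2)^3$. You should replace your appeal to ``secondary operations'' with this $\nu$-multiplication argument, or supply an actual computation of $\partial$.

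Your fallback for the extension problem --- that the cofibration ``admits a section over the skeleta relevant to degree $1$'' --- is unsubstantiated; you neither construct such a section nor give a reason one should exist, and it is not needed. Your first route, reading off that the three surviving $E_\infty$ classes in the $1$-stem are not connected by $h_0$-lines (the paper's ``no additive extensions''), is the correct one, but it only becomes available after the $d_2$ has been ruled out as above. As a minor point, your passage from the $\mathcal{A}_2$-module splitting of $H^*(\gC;\bF_2)$ to a 2-complete stable splitting $\gC \simeq \bigvee_k (\Sigma^{4k-1}\mathbf{S} \vee \Sigma^{4k}\mathbf{S}/2)$ also needs justification (cohomology splittings do not automatically realize as spectrum splittings), though for the low-degree groups you use this can be verified directly from the Adams chart without assuming the splitting.
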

\begin{proof}
On the Adams $E_2$-page in total degree 1, there is a $\bF_2^2$ in filtration 1 and a $\bF_2$ in filtration 2, and there are \textit{no additive extensions}. In total degree 2 there is an $\bF_2$ in filtration 0, which could potentially kill the element in total degree 1 filtration 2. We claim there is no such differential.

Consider the long exact sequence on homotopy (modulo odd torsion) coming from the cofibration sequence 
$$\MT{Spin}{2} \lra \MT{Pin^-}{2} \lra \gC,$$
\begin{eqnarray*}
\cdots \lra  \pi_2(\gC) \lra \pi_1(\MT{Spin}{2}) \overset{\neq 0}\lra \pi_1(\MT{Pin^-}{2}) \lra (\bZ/2)^2 \overset{0}\lra \\
\bZ \oplus \bZ/2 \lra \bZ \oplus \bZ/4 \lra (\bZ/2)^2 \overset{0}\lra \bZ/2 \overset{\cong}\lra \bZ/2 \overset{0}\lra \bZ \overset{\cdot 2}\lra \bZ \lra \bZ/2.
\end{eqnarray*}
Working backwards from the end, the boundary map
$$\pi_{-1}(\gC) = \bZ \lra \pi_{-2}(\MT{Spin}{2}) = \bZ$$
is multiplication by 2, as $\pi_{-2}(\MT{Pin^-}{2}) = \bZ/2$, and hence the previous map must be zero. Thus $\pi_{-1}(\MT{Spin}{2}) \to \pi_{-1}(\MT{Pin^-}{2})$ is surjective and hence an isomorphism. The map
$$\pi_0(\MT{Spin}{2}) = \bZ \oplus \bZ/2 \lra \pi_0(\MT{Pin^-}{2}) = \bZ \oplus \bZ/4$$
has cokernel $(\bZ/2)^2$, so must be injective, and hence the previous map is zero. Note $\pi_1(\MT{Spin}{2})$ is generated by $\nu$ times a generator of $\pi_{-2}(\MT{Spin}{2})$, so it has non-trivial image in $\pi_1(\MT{Pin^-}{2})$, and the result follows.
\end{proof}

\bibliographystyle{plain}
\bibliography{ExoticMCG}  

\end{document}